\numberwithin{equation}{section}
\newtheorem{thma}{Theorem}[section]
\newtheorem{lemma}[thma]{Lemma}
\newtheorem{defi}[thma]{Definition}
\newtheorem{prop}[thma]{Proposition}
\newtheorem{claim}{Claim}
\newtheorem{remark}{Remark}
\newcommand{\eps}{\varepsilon}
\newtheorem*{thma*}{Theorem}
\renewcommand{\mod}{\;\text{mod}\;}
\begin{document}

\title[Local Geometry in the two-periodic Aztec diamond]{Local Geometry of the rough-smooth interface in the two-periodic Aztec diamond}

\author{Vincent Beffara, Sunil Chhita and Kurt Johansson} 

%\thanks{V.B. and S.C. gratefully acknowledge the support of the German
%  Research  Foundation in  SFB 1060-B04  ``The Mathematics  of Emergent
%  Effects''. K.J.  gratefully acknowledge the  support of the  Knut and
%  Alice Wallenberg Foundation grant KAW:2010.0063. }

\maketitle
\begin{abstract}
Random tilings of the two-periodic Aztec diamond contain three macroscopic regions: frozen, where the tilings are deterministic; rough, where the correlations between dominoes decay polynomially; smooth, where the correlations between dominoes decay exponentially. In a previous paper, the authors found that a certain averaging of height function differences at the rough-smooth interface converged to the extended Airy kernel point process. In this paper, we augment the local geometrical picture at this interface by introducing well-defined lattice paths which are closely related to the level lines of the height function. We show, after suitable centering and rescaling, that a point process from these paths converge to the extended Airy kernel point process provided that the natural parameter associated to the two-periodic Aztec diamond is small enough.

\end{abstract}

\section{Introduction} \label{sec:intro}

Random tiling models have in recent years provided a rich source of stochastic processes related to random matrix theory statistics; see~\cite{Joh17} and references therein. In particular, restricted to certain domains, random tilings of large domains may separate into macroscopic regions featuring facets at the boundary.   In these facets, the random tiling appears ordered and the measure is said to be \emph{frozen} (or solid).  Away from these facets, the measure can be \emph{rough} (also known as liquid) or \emph{smooth} (also known as gas)\footnote{We adopt the nomenclature from statistical physics instead of solid, liquid and gas. These are not states of matter. } with the distinction depending on whether the correlations between the tiles decay polynomially or exponentially.  For some classes of these random tilings, the curves separating these regions can be analyzed~\cite{CKP:01, KO:07, ADPZ:20}.  This feature is mathematically established for random tilings but should hold for other similar models such as the six vertex model; see e.g.~\cite{CS:16, Agg:19}.

For many random tiling models containing just a frozen and a rough phase, there is a lattice path which separates the two phases.  It has been shown in some of these models that the fluctuations of this path, under suitable scaling and centering, is given by the Airy process\footnote{In this paper, by Airy process, we mean the Airy-2 process.}, and this feature is believed to be universal.  The question motivating a series of papers including this one is whether there is a similar path separating a rough and smooth phase and are its fluctuations, after suitable centering and rescalings, also described by the Airy process?

In this paper, we focus on a particular random tiling model, the \emph{two-periodic Aztec diamond} which is defined fully below. This model was introduced in~\cite{CY:13} and its correlation kernel\footnote{more precisely, a formula was found for entries of the inverse Kasteleyn matrix} was computed in a long-winded computation.  This formula was simplified in~\cite{CJ:16} into a more convenient form for asymptotic computations, and the asymptotics were computed along a diagonal of the Aztec diamond, including at the rough-smooth boundary.  In a later development, Duits and Kuijlaars~\cite{DK:17} gave a different and more systematic approach to compute a particle correlation kernel and analyze its asymptotics in the two-periodic Aztec diamond using multiple orthogonal polynomials. Yet another approach based on Wiener-Hopf factorization of matrix-valued symbols was given in ~\cite{BD:19}. Further developments of these approaches have been particularly fruitful in other models~\cite{Be:19, CDKL:19}.

However, these results did not give any significant insight into the geometry of the interface between the smooth and rough regions nor the limiting behavior.  The two main obstacles being that there was no clear definition of paths which could separate the two macroscopic regions, and the methods available at the time, only gave the asymptotics of the dominoes and did not directly connect the computations with the asymptotic picture which is overwhelmingly evident from simulations.  Put simply, the asymptotics of the inverse Kasteleyn matrix for the dimers/dominoes at the rough-smooth boundary involves a full-plane smooth term with a part of the Airy kernel as a correction term. Nevertheless, we introduced a random signed measure in~\cite{BCJ:16} built by taking specific averages of height function differences between faces. The height function gives a random surface interpretation of the random tiling model and is defined precisely below.
 After quite a subtle computation, we showed that this signed measure converged to the extended Airy kernel point process.  

However, this recent development did not specify any lattice paths which separate the rough and smooth regions as one would expect with the presence of the extended Airy kernel point process.    In this paper, we  find that there is a way to define a sequence of lattice paths such that the net (signed) number of lattice paths through appropriate intervals  converges to the extended Airy kernel point process, provided that the natural parameter associated to the two-periodic Aztec diamond is small enough.  This restriction is due to technical details of our proof and we do not believe there to be any different behavior outside this restriction.   The significance of our result is that it shows that there are paths separating the rough and smooth regions that are in a sense described by the Airy kernel point process in the limit. Thus we take a step towards understanding what is
apparent from our simulations.  Unfortunately, we fall short of proving the overall geometry as well as showing that there is a last path converging to the Airy process.  The rest of this introduction is devoted to giving the main definitions of the model, defining the extended Airy kernel, giving an informal version of the main theorem, which is stated precisely later in the paper.

\subsection{The two-periodic Aztec diamond} \label{sec:intro:overview}
An \emph{Aztec diamond graph of size $n$} is a bipartite graph which contains white vertices given by
\begin{equation}
\mathtt{W}= \{(i,j): i \mod 2=1, j \mod 2=0, 1 \leq i \leq 2n-1, 0 \leq j \leq 2n\}
\end{equation}
and black vertices given by
\begin{equation}
\mathtt{B}= \{(i,j): i \mod 2=0, j \mod 2=1, 0 \leq i \leq 2n, 1\leq j \leq 2n-1\}.
\end{equation}
The edges of the Aztec diamond graph are given by $\mathtt{b} -\mathtt{w}= \pm e_1,\pm e_2$ for $\mathtt{b} \in \mathtt{B}$ and $\mathtt{w} \in \mathtt{W}$, where $e_1=(1,1)$ and $e_2=(-1,1)$.  The coordinate of a face in the graph is defined to be the coordinate of its center.
For an Aztec diamond  graph of size $n=4m$ with $m \in \mathbb{N}$, define the \emph{two-periodic Aztec diamond}, $D_m$, to be an Aztec diamond graph with edge  weights $a$ for all edges incident to the faces $(i,j)$ with  $(i+j)\mod 4=2$ and edge weights $b$ for all the edges incident to the faces $(i,j)$ with  $(i+j) \mod 4=0$; see Fig.~\ref{fig:weights}. We call the faces $(i,j)$ with $(i+j)\mod 4=2$ to be the \emph{$a$-faces} and the faces $(i,j)$ with $(i+j)\mod 4=0$ to be the \emph{$b$-faces}. With this setup, one sees that there are  two types of white vertices and black vertices depending on the weights of the incident edges. These are given by
\begin{equation} \label{white:parity}
\mathtt{W}_i= \{(x,y) \in \mathtt{W}: x+y \mod4=2i+1\} \hspace{5mm}\mbox{for } i \in\{0,1\}
\end{equation}
and
\begin{equation}\label{black:parity}
\mathtt{B}_i= \{(x,y) \in \mathtt{B}: x+y \mod4=2i+1\} \hspace{5mm}\mbox{for } i \in\{0,1\}.
\end{equation}
\begin{figure}
\begin{center}
\includegraphics[height=5cm]{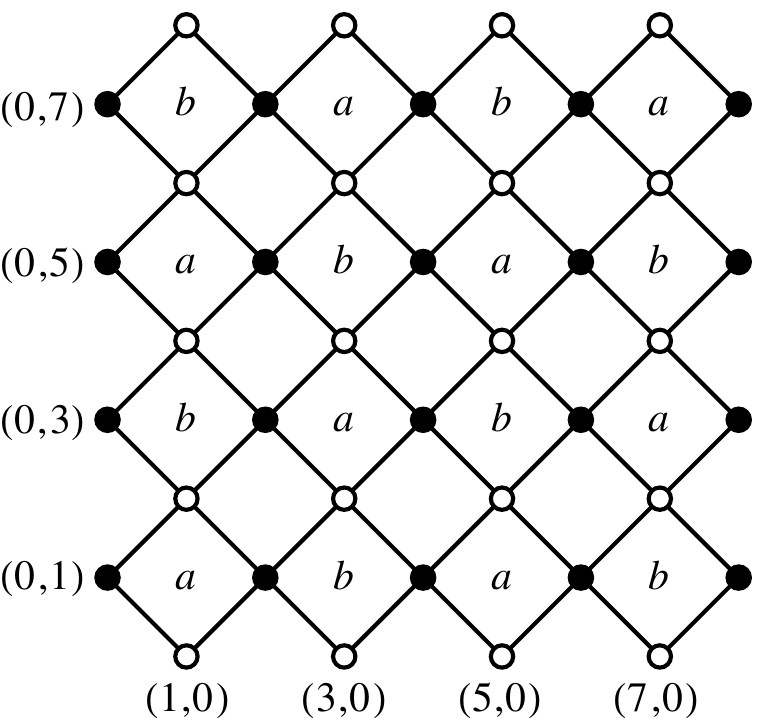}
\caption{The two-periodic Aztec diamond with $m=1$. Edges incident to the faces labelled $a$ have edge weight $a$ while edges incident to the faces labelled $b$ have edge weight $b=1$.}
\label{fig:weights}
\end{center}
\end{figure}

A dimer configuration on the Aztec diamond graph is a subset of edges so that each vertex is incident to exactly one edge. Such edges in a configuration are called dimers.  A probability measure is defined on the two-periodic Aztec diamond graph by picking each dimer configuration with probability proportional to the product of the edge weights of that dimer configuration.

Throughout the rest of the paper, we refer to an  $a$-dimer ($b$-dimer resp.) to be a dimer covering an $a$-edge ($b$-edge resp.).  We say that an $a$-dimer is \emph{incident to a particular $b$-face} if it shares a common vertex with that $b$-face.

%For the purpose of  this paper, we set $b=1$; this  incurs no loss of
%  generality, since multiplying both $a$  and $b$ by the same constant
%  does not change the model that we consider.

\subsection{Squishing}
Assign an orientation to each edge of the Aztec diamond, by prescribing an arrow from each white vertex to its incident black vertices.  For the two-periodic Aztec diamond graph, define the \emph{squishing procedure} as the operation which contracts each $b$-face while simultaneously increasing the size of the $a$-faces so that the $a$-face coordinates remain unchanged and keeping the orientation; see Fig.~\ref{fig:squishAztec12} for an example. The resulting graph consists of only $a$-edges and $a$-faces while for the dimers, only the $a$-dimers are visible.  This operation was inspired by a similar operation for the honeycomb graph in~\cite{You:09} and we adopt the naming convention. Label $\tilde{D}_m$ to be the graph $D_m$ after the squishing procedure.

\begin{figure}
\begin{center}
\includegraphics[height=6cm]{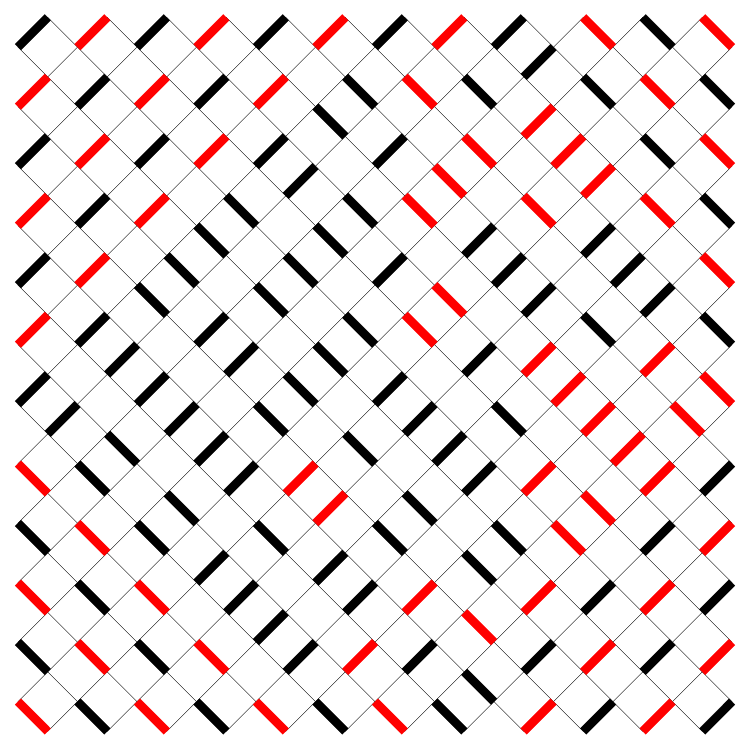}
\includegraphics[height=6cm]{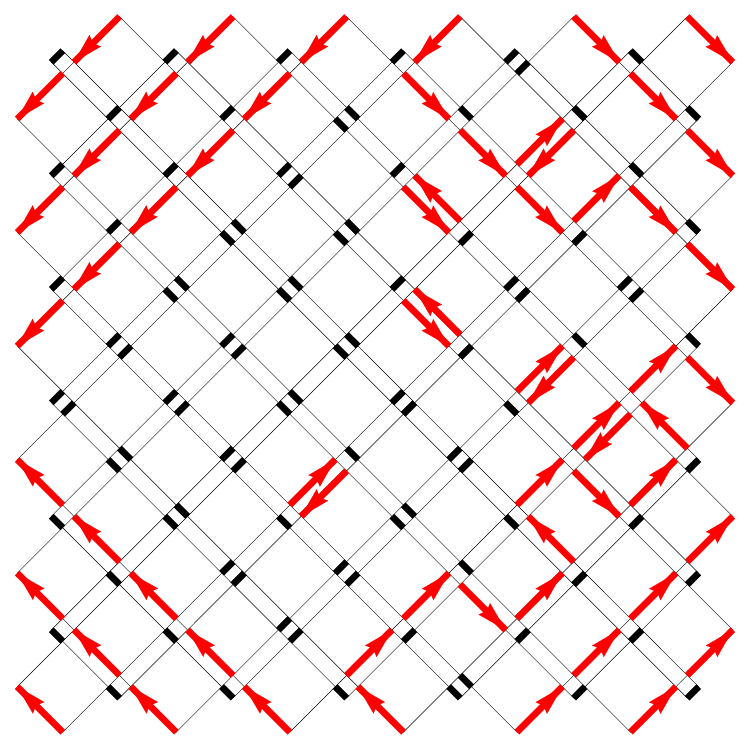}
\caption{The squishing procedure  for an Aztec diamond of size $12$. In each figure, the $a$-dimers are drawn in red while the $b$-dimers are drawn in black. The left figure shows the original dimer configuration while the right figure shows the same dimer configuration with a smaller size of $b$-face. We have only put the orientation on the $a$-dimers. }	
\label{fig:squishAztec12}
\end{center}
\end{figure}

 After this procedure, we call a \emph{double edge} to be the result of two $a$-dimers contracting to the same edge.  Observe that there is a parity condition for the number of incident $a$-dimers for each $b$-face. That is, the number of incident $a$-dimers for each $b$-face is either $0$, $2$ or $4$ since odd numbers invalidate the dimer covering.  A consquence of this parity condition, as we explain in Section~\ref{subsec:squish}, is that the $a$-dimers are either part of double edges, (oriented) \emph{loops} or \emph{paths}, where the precise definitions of loops and paths are given in Section~\ref{subsec:squish}\footnote{A careful choice needs to be made to make paths and loops well-defined. This choice is given in Section~\ref{subsec:squish}.}.  Heuristically, paths can be thought of as connected sequences of $a$-dimers which start at  either the top or bottom boundaries of the Aztec diamond and terminate at either the left or right boundaries of the Aztec diamond. Fig.~\ref{fig:squishAztec12} shows paths and double edges, while Fig.~\ref{fig:n300} shows double edges, loops and paths  in a larger simulation.

\begin{figure}
\begin{center}
\includegraphics[height=12cm]{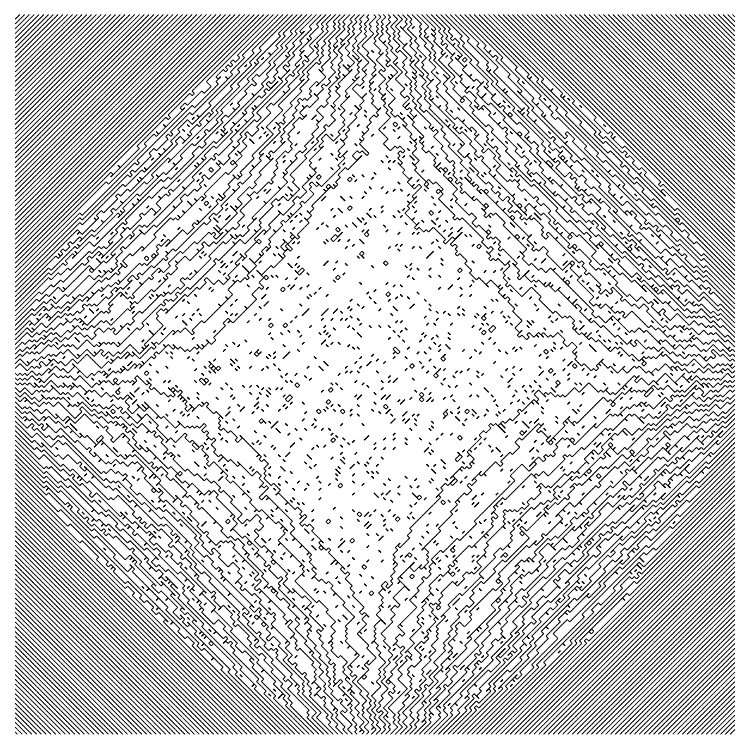}
	\caption{A random dimer configuration of a two-periodic Aztec diamond of size 300 with $a=0.5$ after the squishing procedure. We have suppressed the orientation.} 
	\label{fig:n300}
\end{center}
\end{figure}

\subsection{Extended Airy kernel point process}

Following~\cite{Joh:03}, let $\mathbbm{I}_{A}$ be the indicator function for some set $A$ and denote $\mathbbm{I}$ to be the identity matrix or operator. 
Let $\mathrm{Ai}(\cdot)$ denote the standard Airy function, and define
\begin{equation} \label{eq:Airymod}
\begin{split}
\tilde{\mathcal{A}}(\tau_1,\zeta_1;\tau_2,\zeta_2)&= 
\int_0^\infty e^{-\lambda (\tau_1-\tau_2) } \mathrm{Ai} (\zeta_1 +\lambda) \mathrm{Ai} (\zeta_2+\lambda) d\lambda.\\
\end{split}
\end{equation}
and
\begin{equation}\label{eq:Airyphi}
\phi_{\tau_1,\tau_2} (\zeta_1 ,\zeta_2) =
 \mathbbm{I}_{\tau_1<\tau_2} \frac{1}{\sqrt{4 \pi (\tau_2-\tau_1)}} e^{-\frac{(\zeta_1-\zeta_2)^2}{4(\tau_2-\tau_1)}-\frac{(\tau_2-\tau_1)(\zeta_1+\zeta_2)}{2}+\frac{(\tau_2-\tau_1)^3}{12}},
\end{equation}
which is referred to as the \emph{Gaussian part} of the extended Airy kernel; see~\cite{Joh:03}. The \emph{extended Airy kernel}, ${\mathcal{A}}(\tau_1,\zeta_1;\tau_2,\zeta_2)$, is defined by
\begin{equation}\label{eq:extendedAiry}
{\mathcal{A}}(\tau_1,\zeta_1;\tau_2,\zeta_2)=\tilde{\mathcal{A}}(\tau_1,\zeta_1;\tau_2,\zeta_2)-\phi_{\tau_1,\tau_2} (\zeta_1 ,\zeta_2).
\end{equation}
Let $\beta_1 < \dots < \beta_{L_1}$, $L_1 \geq 1$ be fixed given real numbers and let $A_p=[\alpha_p^l,\alpha_p^r]$ for $\alpha_p^l < \alpha_p^r$ and $1 \leq p \leq L_2$ be finite disjoint intervals in $\mathbb{R}$.  Write 
$$
\Psi(x) = \sum_{p=1}^{L_2} \sum_{q=1}^{L_1}w_{p,q}\mathbbm{I}_{\{\beta_q\} \times A_p}(x),
$$
where $w_{p,q}$ are some given complex numbers for $1 \leq p \leq L_2$, $1 \leq q \leq L_1$.  The \emph{extended Airy kernel point process}, $\mu_{\mathrm{Ai}}$, is a determinantal point process  on $L_1$ lines $\{\beta_1,\dots, \beta_{L_1}\} \times \mathbb{R}$ defined by  
\begin{equation}\label{eq:extendedAirydef}
\mathbb{E}\left[ \exp \left( \sum_{p=1}^{L_2} \sum_{q=1}^{L_1}w_{p,q} \mu_{\mathrm{Ai}} (\{\beta_q\} \times A_p) \right)\right] = \det ( \mathbbm{I}+(e^{\Psi}-1) \mathcal{A})_{L^2(\{\beta_1,\dots, \beta_{L_1}\} \times \mathbb{R}}
\end{equation}
for $w_{p,q} \in \mathbb{C}$.

\subsection{Informal Statement of Theorem} \label{intro:mainresult}

Here, we state informally our main theorem, using the informal definitions for paths.  The main theorem will be made precise below in Section~\ref{sec:mainthm}. 

In Fig.~\ref{fig:n300}, we see that there are paths starting and ending at the boundary, as well as double edges and loops, some of which may be attached to the paths.  These notions will be defined precisely below in Section~\ref{subsec:squish}. If we remove the loops and the double edges, we are left with just paths.  The regions between these paths will be called \emph{corridors}.  These corridors go all the way up to the boundary and the height at the boundary defines the corridor height for all faces in the corridor\footnote{{ Our convention for the height function is given in Section~\ref{subsec:squish}}}.  Differences between corridor heights on faces gives the (signed) number of paths between faces.    For $ 1 \leq p \leq L_2$, $1 \leq q \leq L_1$, the intervals $\{\beta_q\} \times A_p$, can be rescaled and put between faces at the rough-smooth boundary, and we can consider the corridor height differences between faces.  Dividing this by $4$ gives a quantity that we denote by $\kappa_m (\{\beta_q\} \times A_p)$.

\begin{thma*}[Informal version of Theorem~\ref{thm:main}]
	Assume that  $a \in (0,1/3)$. The random variables $\kappa_m (\{\beta_q\} \times A_p)$, $1 \leq q \leq L_1$, $1 \leq p \leq L_2$ converge jointly in law to the random variables $\mu_{\mathrm{Ai}} (\{ \beta_q\} \times A_p)$, $1 \leq q \leq L_1$, $1 \leq p \leq L_2$, as $m \to \infty$.   
\end{thma*}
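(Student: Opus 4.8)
The plan is to prove convergence in law by proving convergence of the exponential moment generating functionals, matching the right-hand side of the defining relation~\eqref{eq:extendedAirydef} for $\mu_{\mathrm{Ai}}$. Concretely, I would write the corridor-height difference $4\kappa_m(\{\beta_q\}\times A_p)$ as a signed count of the lattice paths crossing the rescaled interval, express this count in terms of the ``particles'' left by the squishing procedure (the endpoints of $a$-dimers on the relevant column of $b$-faces), and show that these particles, after the centering and scaling dictated by the rough-smooth tangency point, form a determinantal point process whose kernel converges to the extended Airy kernel $\mathcal{A}$. Since everything is determinantal, the exponential moment functional is a Fredholm determinant $\det(\mathbbm{I}+(e^{\Psi_m}-1)K_m)$ on the appropriate discrete space, and the task reduces to (i) identifying $K_m$ in terms of the inverse Kasteleyn matrix of $D_m$, (ii) performing a steepest-descent analysis of $K_m$ at the rough-smooth interface, and (iii) passing the Fredholm determinant to the limit with adequate trace-norm control.

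The key steps, in order, would be as follows. First, I would make precise the correspondence between corridors, paths, and the height function: the net number of paths through an interval is a height difference divided by $4$, and after the squishing identification this height difference is a linear combination of indicator random variables of $a$-dimers, hence expressible through entries of the inverse Kasteleyn matrix $K^{-1}$ via the Kenyon formula for dimer statistics. Second, I would assemble the generating functional $\mathbb{E}[\exp(\sum_{p,q} w_{p,q}\kappa_m(\{\beta_q\}\times A_p))]$ into a Fredholm determinant; here one uses the standard fact that linear statistics of a determinantal process have such a representation, and one must keep track of the conjugations/gauge factors so that the relevant operator acts on $L^2$ of a discrete subset of $\{\beta_1,\dots,\beta_{L_1}\}\times\mathbb{R}$ (the $\beta_q$'s correspond to nearby diagonals/time slices, the second coordinate to position along the interface). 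Third, using the simplified kernel formula of~\cite{CJ:16} (or equivalently the formulas of~\cite{DK:17,BD:19}), I would carry out a saddle-point analysis: the relevant double contour integral has, at the rough-smooth tangency, a double critical point, and the local rescaling produces the Airy function terms in $\tilde{\mathcal{A}}$, while the ``Gaussian part'' $\phi_{\tau_1,\tau_2}$ emerges from the residue/ordering term that appears when $\tau_1<\tau_2$. The restriction $a\in(0,1/3)$ is precisely what is needed to control the error terms and the tails in this analysis, mirroring the constraint already present in~\cite{BCJ:16}. Fourth, I would upgrade pointwise kernel convergence to convergence of the Fredholm determinants, by establishing uniform exponential decay estimates on $K_m$ away from the scaling window (so the operators are uniformly trace class and the truncation to a large box is uniformly controlled), and then invoking continuity of $\det(\mathbbm{I}+\cdot)$ in trace norm. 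Finally, matching the limiting Fredholm determinant with~\eqref{eq:extendedAirydef} and using that the exponential moment generating functional determines the joint law of the (bounded, integer-valued up to the sign) random variables $\kappa_m(\{\beta_q\}\times A_p)$ gives joint convergence in law.

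I expect the main obstacle to be step three together with step four: controlling the asymptotics of the kernel $K_m$ uniformly in the scaling variables and, crucially, the tail behavior needed for the trace-class bounds. The subtlety, already flagged in the introduction, is that the inverse Kasteleyn entries at the interface are a sum of a full-plane ``smooth'' (gas) term plus an Airy-type correction, and one must show that the smooth term is annihilated by the particular signed averaging built into $\kappa_m$ (this is the combinatorial content of why corridors/paths are the ``right'' objects), leaving only the Airy contribution in the limit. Getting the subtraction to work cleanly, while simultaneously retaining enough decay to bound the Fredholm series, is where the condition $a<1/3$ enters and is the technically hardest part; the rest is steepest descent of a concrete double integral and soft functional-analytic limiting arguments. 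A secondary, more bookkeeping-heavy obstacle is making the definitions of loops, paths, corridors, and corridor heights watertight enough (the ``careful choice'' alluded to in Section~\ref{subsec:squish}) that the identity ``path count $=$ height difference$/4$'' holds with no ambiguity, including near where loops or double edges attach to paths.
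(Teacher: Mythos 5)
There is a genuine gap. Your plan treats $4\kappa_m(\{\beta_q\}\times A_p)=h^a_{\mathtt{c}}(J^r_{p,q,1,1})-h^a_{\mathtt{c}}(J^l_{p,q,1,1})$ as if it were a linear statistic of the dimer determinantal point process, to be packaged into a Fredholm determinant and analyzed by steepest descent. But the corridor height $h^a_{\mathtt{c}}$ is \emph{not} a local observable of the dimer configuration: whether a given $a$-dimer belongs to a path or to a loop is a global property, so the signed path count through an interval cannot be written as a sum of dimer indicators and has no direct Fredholm-determinant representation. What \emph{is} determinantal is the full $a$-height difference $h^a(J^r)-h^a(J^l)$, which by~\eqref{loopcorridor} equals the corridor contribution \emph{plus} the loop contribution $h^a_l$. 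The entire new content of the paper is the reduction $\kappa_m-\nu_m\to 0$ in probability, where $\nu_m$ is a (vertically averaged) height-difference measure: one must show (a) that the averaged loop contributions $\frac{1}{4M}\sum_k h^a_l(J^r_{p,q,k,1})-h^a_l(J^l_{p,q,k,1})$ vanish, and (b) that no path separates the $M$ reference faces used in the averaging, so the corridor height is constant along them. Step (a) uses a coupling of the Aztec measure near the interface to the full-plane smooth phase (Proposition~\ref{prop:couple}), approximate independence of distant boxes (Proposition~\ref{prop:couple2}) to get a Chebyshev bound, and a Peierls estimate on loop lengths (Lemma~\ref{lem:loopsalmostsure}); step (b) additionally needs the fact that the full-plane smooth phase has no biinfinite paths almost surely (Theorem~\ref{thm:onecorridor}), proved via the Temperley/spanning-tree correspondence and Wilson's algorithm. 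None of this machinery appears in your proposal, and without it there is no bridge from the path-counting observable to anything determinantal.

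Relatedly, you misplace the origin of the hypothesis $a\in(0,1/3)$. The Fredholm-determinant/steepest-descent analysis of the height differences (Theorem~\ref{thm:previousBCJ}, from~\cite{BCJ:16}) holds for \emph{all} $a\in(0,1)$; the restriction $a<1/3$ enters only through the Peierls bound $\sum_{k\ge d}(3a)^k$ controlling loop lengths, i.e.\ through step (a) above. Your remark that the ``smooth term is annihilated by the signed averaging'' is also not how the limit arises: the full-plane term $\mathbb{K}^{-1}_{1,1}$ contributes the Gaussian part $\phi$ of the extended Airy kernel rather than being cancelled, and the averaging over $M$ points serves to suppress the loop fluctuations and oscillatory corrections, not to delete the smooth phase. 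The steepest-descent and trace-control work you describe is essentially already done in~\cite{CJ:16,BCJ:16}; the missing ingredient in your write-up is precisely the combinatorial-probabilistic argument that lets one replace corridor heights by heights.
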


A couple of remarks are in order. 

\begin{remark}
	\begin{enumerate}
		\item For the statement of the theorem, we require that $a \in (0,1/3)$, but there is a smooth phase for all $a \in (0,1)$.  This is a technical restriction and we believe that the theorem should hold for all values of $a \in (0,1)$. 
		\item  We cannot show that there actually is a last path in the third quadrant connecting the bottom and left boundaries as we move along the diagonal. Paths can in principle behave in strange ways but these strange behaviors should happen with very low probability. 
			
	\end{enumerate}

\end{remark}

These remarks are summarized into a conjecture after the statement of the main theorem below.

\subsection*{Acknowledgements}
SC acknowledges the support of the UK Engineering and Physical Sciences Research Council (EPSRC) grant EP/T004290/1.
KJ acknowledges the support of the Swedish Research Council (VR) and grant KAW
2015.0270 of the Knut and Alice Wallenberg Foundation. We would like to thank the referees for their careful readings and comments on an earlier version of this paper.

\section{Combinatorial definitions} 
\label{subsec:squish}

In this section, we expand on the squishing procedure introduced before, giving the concepts of loops and paths and their correspondence with the height function.

As mentioned in the introduction, we assign outgoing edges from each white vertex to its incident black vertices.  For a dimer covering $d$ on $D_m$, let $\tilde{d}$ denote the dimer covering after the squishing procedure, that is, $\tilde{d}$ records the collection of $a$-dimers present in a configuration $D_m$, with  prescribed arrows from white vertices to black vertices.

 The \emph{height function}, an idea usually attributed to Thurston~\cite{Thu:90}, is defined for the two-periodic Aztec diamond at the center of each face of the Aztec diamond graph. The height function is determined by the height differences as we traverse between each pair of adjacent faces influenced by whether a dimer covers the shared edge and the prescribed arrow in the following way:
\begin{itemize}
	\item a height change of $+3$ if the shared edge is covered by a dimer and the prescribed arrow points to the left (from the starting face), 
	\item a height change of $-3$ if the shared edge is covered by a dimer and the prescribed arrow points to the right (from the starting face), 
	\item a height change of $-1$ if the shared edge is not covered by a dimer and the prescribed arrow points to the left (from the starting face), and
	\item a height change of $+1$ if the shared edge is not covered by a dimer and the prescribed arrow points to the right (from the starting face). 
\end{itemize}
We assign the height at the face $(0,0)$ (outside of the Aztec diamond graph) to be equal to 1.  The height function on the faces bordering the Aztec diamond graph are deterministic and given by the above rule.   Notice that the height function is \emph{divergence free} around each white and black vertex in the Aztec diamond.

Define the \emph{$a$-height function}, denoted by $h^a(f)$ where $f$ is an $a$-face, to be the height function of the two-periodic Aztec diamond restricted to the $a$-faces. The $a$-height change between two $a$-faces which share an edge after the squishing procedure is in $\{-4,0,4\}$. It follows that the $a$-height function is completely determined after the squishing procedure but the $a$-height function does not recover the original dimer covering; see Fig.~\ref{fig:squishAztec12a} for an example.
\begin{figure}
\begin{center}
\includegraphics[height=6cm]{n12heights2b.pdf}
\includegraphics[height=6cm]{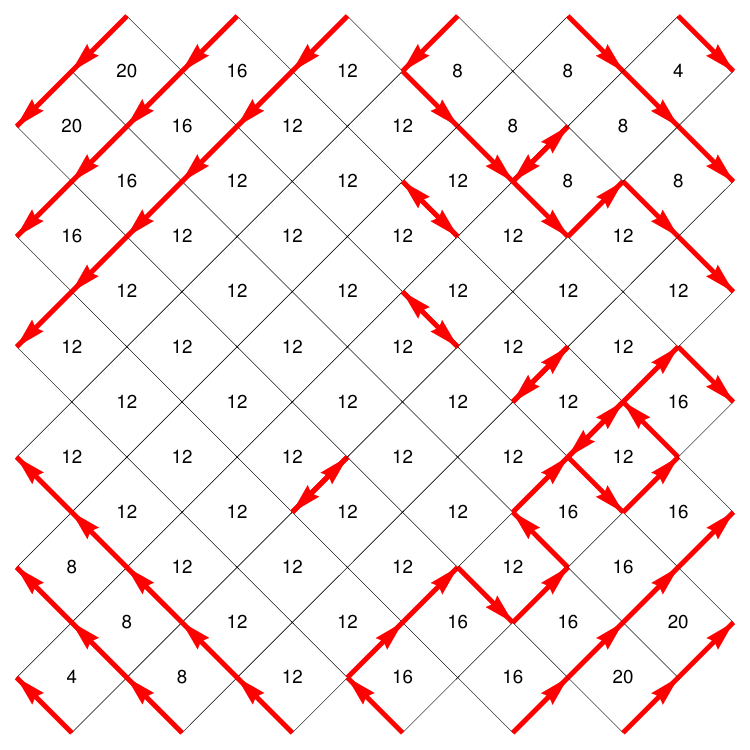}
	\caption{The squishing procedure from Fig.~\ref{fig:squishAztec12} and the right figure shows the $a$-height function with the size of $b$-face set to zero.   }	
\label{fig:squishAztec12a}
\end{center}
\end{figure}
An easy way to see this is that if there is no height change between two $a$-faces then this is either from a double edge or from no $a$-dimers on the shared edge between faces; we cannot distinguish between these configurations from the $a$-height function. The $a$-height function, by construction, is divergence free on the $a$-faces around each $b$-face. The possible $a$-height changes when traversing the $a$-faces around each $b$-face are no change; one $a$-height change of $\pm 4$ and another $a$-height change of $\mp 4$; a height change of $\pm 4$ followed $\mp 4$ followed by $\pm 4$ followed by $\mp 4$. These indicate that the maximum $a$-height change between $a$-faces that are incident to the same $b$-face but do not share an edge after the squishing procedure is 4.  These can easily be verified by considering all local configurations around each $b$-face; see Fig.~\ref{fig:localconfigs}.

\begin{figure}
\begin{center}
\includegraphics[height=3cm]{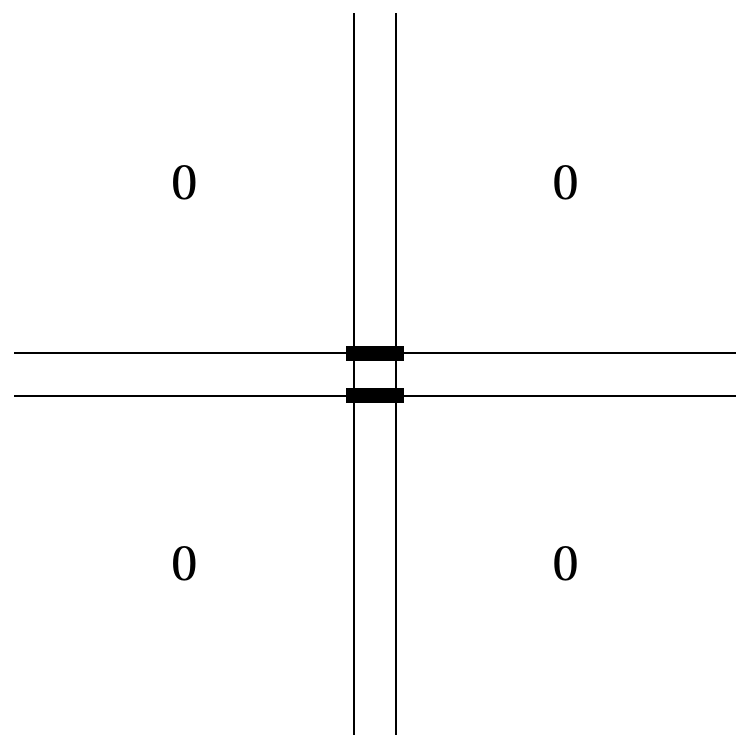}
\includegraphics[height=3cm]{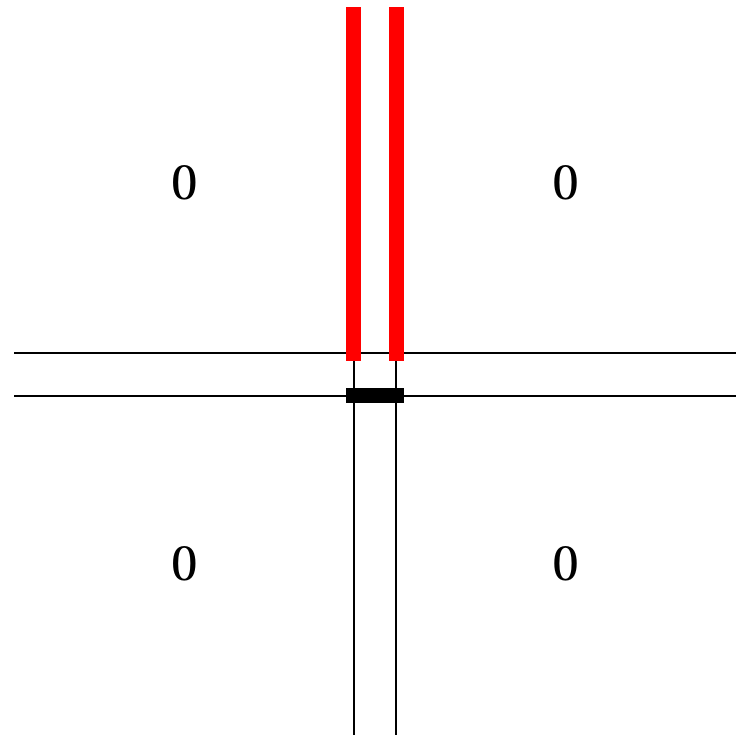}
\includegraphics[height=3cm]{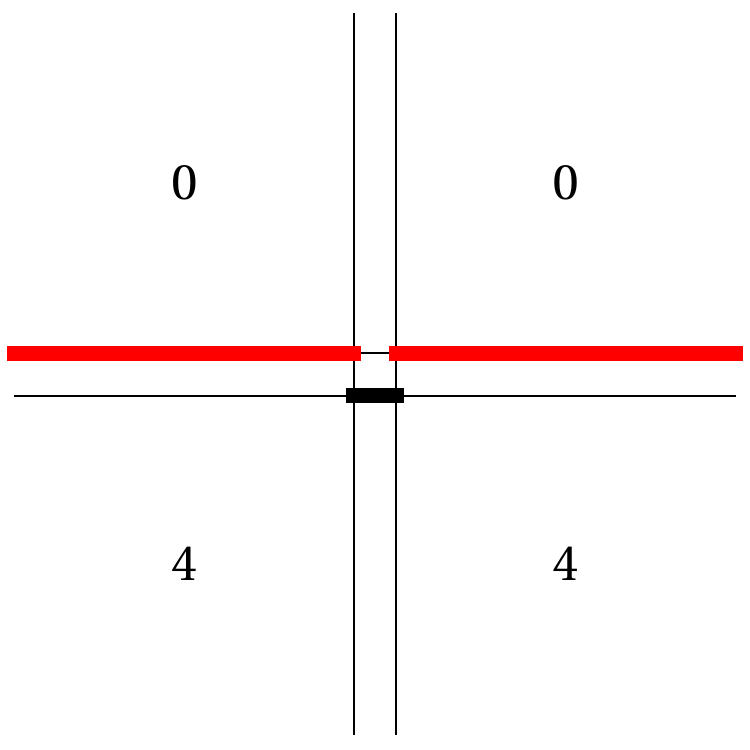}
\includegraphics[height=3cm]{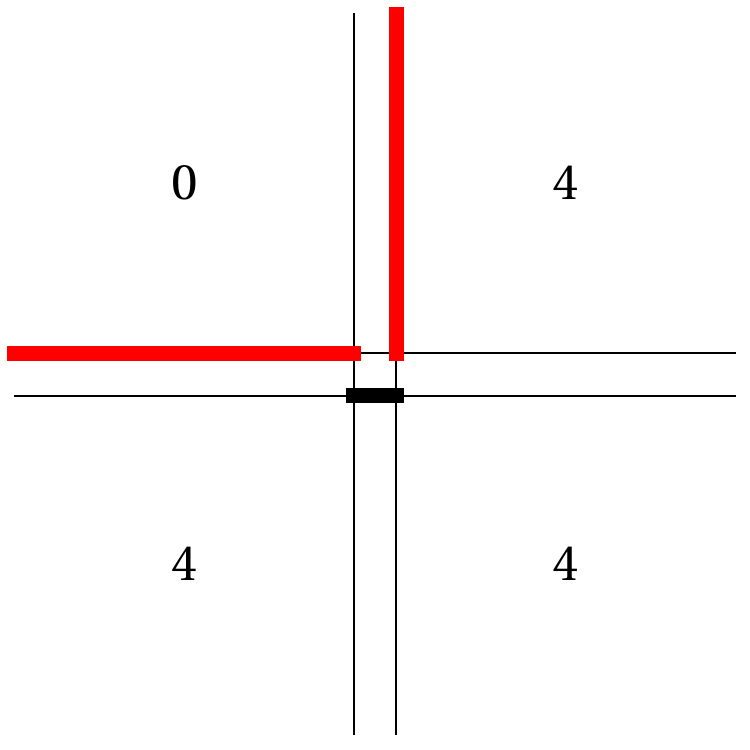}
\includegraphics[height=3cm]{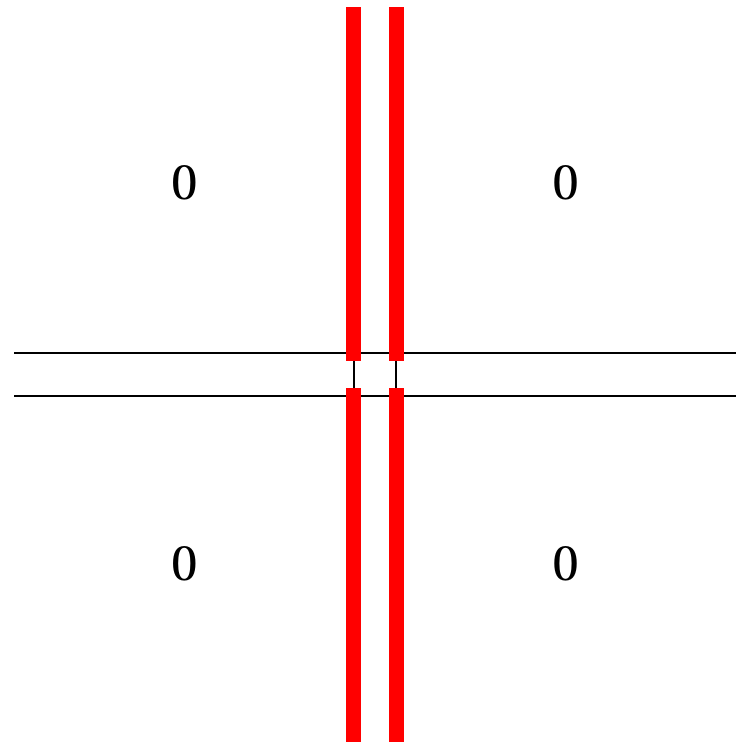}
\includegraphics[height=3cm]{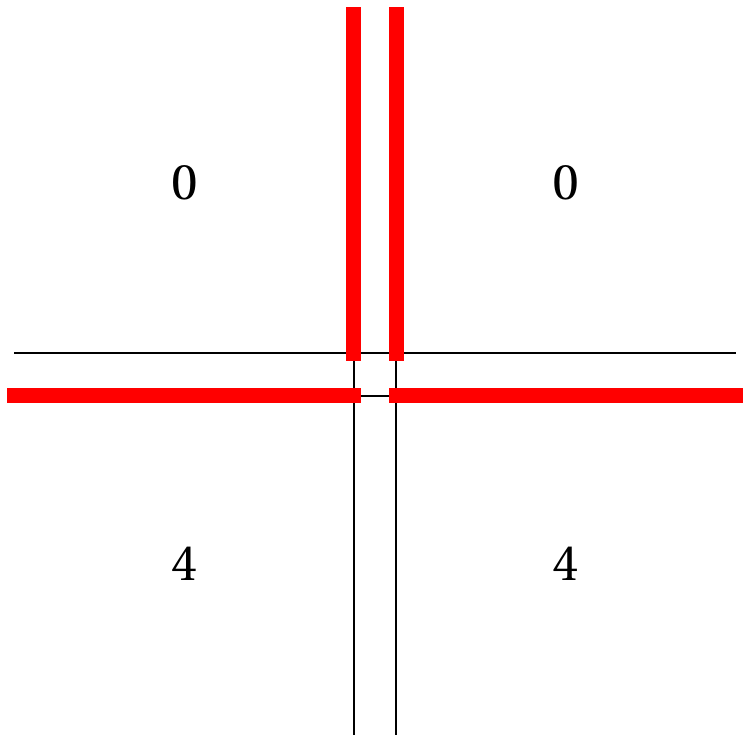}
\includegraphics[height=3cm]{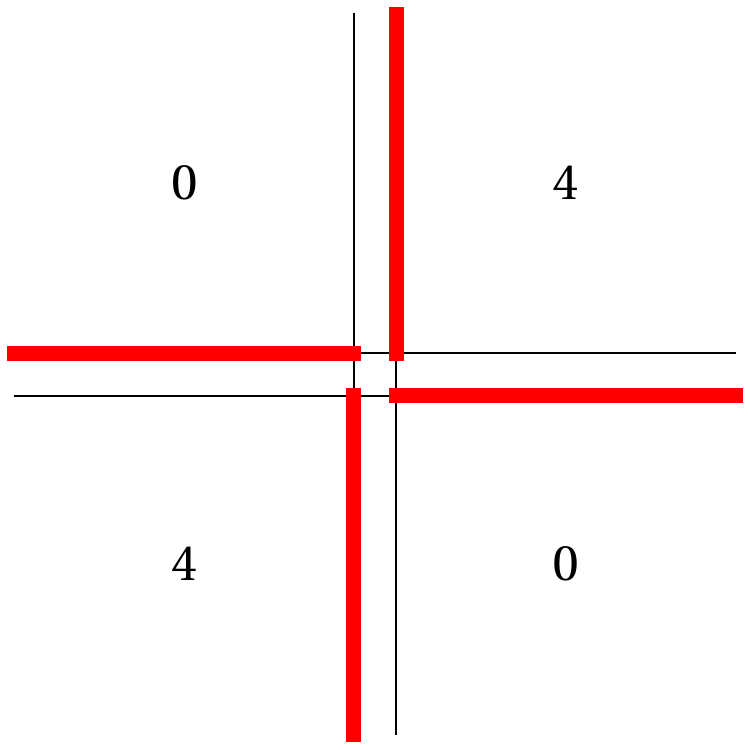}
\caption{All possible local configurations around each $b$-face (up to rotations and reflections) along with the possible $a$-height changes. The $a$-dimers are drawn in red while the $b$-dimers are drawn in black. For simplicity, we have suppresed the orientation.}
\label{fig:localconfigs}
\end{center}
\end{figure}

%We then introduce Peierls argument which controls the number of connected double edges (for all $a \in(0,1)$) and the maximum size of a loop (for relatively small $a$).  We give   another random meausre $\nu_m$, which is shown to have the same limiting distribution as $\mu_m$. Finally, we show that the contribution of the loops for the measure $\nu_m$ is negligible in the limit as $m \to \infty$. 

We define a \emph{loop} of length $k$, with $k\geq 4$, to be a sequence of distinct edges $(e_1,e_3,\dots,e_{2k-1})$  such that
\begin{enumerate}
\item $e_{2i+1}$ are $a$-edges and covered by dimers for all $0\leq i \leq k-1$, and none of these $a$-dimers are part of a double edge after the squishing procedure,
\item there are distinct $b$ edges $e_0,e_2,\dots,e_{2k}$ incident to distinct $b$-faces not covered by dimers such that $e_{2i}$ shares one endpoint with $e_{2i-1}$ and its other endpoint with $e_{2i+1}$ for all $0\leq i \leq 2k$ where $e_{-1}=e_{2k-1}$ and $e_{2k+1}=e_1$.
\end{enumerate}
It follows that after the squishing procedure, the sequence of edges in the loop is connected and visually forms a loop.  Each loop is in fact oriented thanks to the prescribed orientation.   We denote by $\ell(\gamma)$ to be the length of the loop $\gamma$.

	The above criterion of requiring distinct $b$-edges means that loops which appear to have one self-intersection after the squishing procedure, are in fact two separate loops.   However, there is an ambiguity in the definition when two (or more) loops intersect at more than one $b$-face; see Fig~\ref{fig:dichotomy}.  

To circumvent this ambiguity when two or more loops intersect or meet at more than one $b$-face, we introduce a \emph{mirror} at each vertex of $\tilde{D}_m$ where $\tilde{d}$ has four incident $a$-edges.  The mirror is a line between the centers of the $a$-faces of lowest $a$-height value, and on each side of the mirror, there is a different loop.  Once this choice is given, it is not hard to see that the loops are unique. From this convention, we call the vertices with mirrors \emph{meeting points} and say that two loops \emph{meet} at a vertex of $\tilde{D}_m$.

The height function definition means that there is an $a$-height change of $\pm4$ when traversing into or out of each loop with the sign depending on the orientation of the loop and that the $a$-height function along the inner boundary $a$-faces of the loop is constant.  From our conventions, stepping into a counterclockwise loop decreases the $a$-height function by 4 (a negative loop) while stepping into clockwise loop increases the $a$-height function by 4 (a positive loop) which leads to the following definition. 
\begin{defi}
Define $h^a_l(f)$ to be the contribution of the $a$-height function from \emph{only} the loops for the $a$-face $f$, that is $h^a_l(f)/4$  is given by the number of positive loops surrounding $f$ subtracted by the number of negative loops surrounding $f$.
\end{defi}
It follows that given a configuration of oriented loops, there is a well-defined $a$-height function on loops. From the above definition of mirrors, the converse is also true.

We define a \emph{path} of length $k$, with $k\geq 1$, to be a sequence of distinct edges $(e_1,e_3,\dots,e_{2k-1})$ such that
\begin{enumerate}
\item $e_{2i+1}$ are $a$-edges and covered by dimers for all $0\leq i \leq k-1$ and none of these $a$-dimers are part of a double edge after the squishing procedure,
\item there are distinct $b$-edges $e_2,\dots,e_{2k-2}$ not covered by dimers such that $e_{2i}$ shares an endpoint with $e_{2i-1}$ and $e_{2i+1}$ for all $1\leq i \leq k-1$,
\item $e_1$ and $e_{2k-1}$ are incident to the boundary face of $D_m$.
\end{enumerate}

As for loops, each path is in fact oriented thanks to the prescribed orientation.  Analogous to the ambiguity that is present for loops, paths are not well defined due to the possibility of intersecting multiple times with loops (or other paths). This ambiguity is removed using the mirrors, that is paths can meet with loops (or other paths) at meeting points and it is clear, by our convention, which sequence of $a$-dimers belongs to which object. 

\begin{figure}
\begin{center}
\includegraphics[height=5cm]{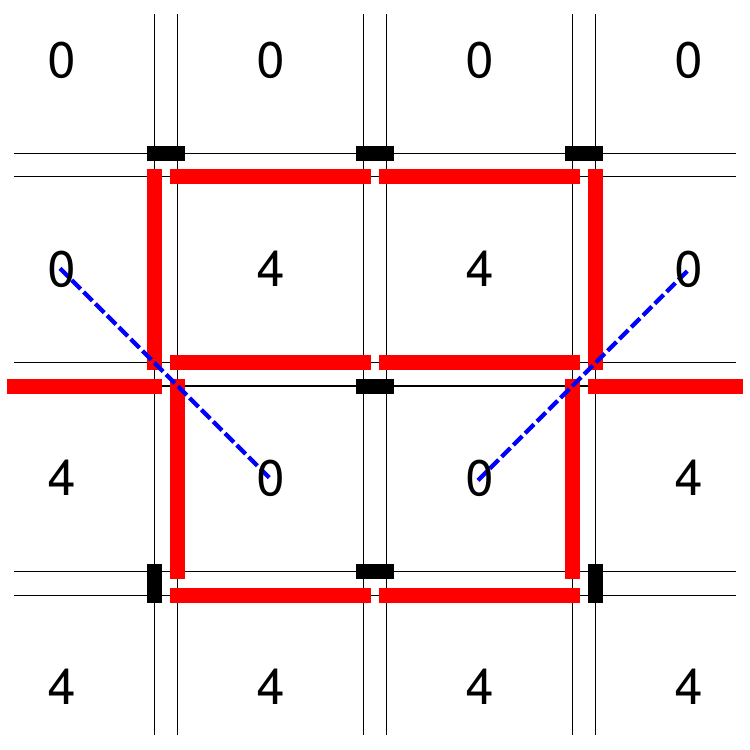}
	\caption{The $a$-dimers are drawn in red while the $b$-dimers are drawn in black, with the $a$-height function given at each $a$-face. The leftmost and rightmost $a$-dimers can be seen as part of a path or a loop. Between these leftmost and rightmost $a$-dimers, it is not clear whether the loop or path goes up or down, unless mirrors are used. The mirrors are drawn in blue (dashed). }
\label{fig:dichotomy}
\end{center}
\end{figure}

For each path, there is an $a$-height difference of $\pm 4$ for the $a$-faces on either side of the path, which depends on the orientation of the path which leads to the following lemma.  %Since we cannot distinguish exactly between a loop and a path locally, we call a \emph{loop/path} to be either a loop or a path. It follows by a parity argument that  an $a$-height function change  between two adjacent $a$-faces indicates the presence of a loop/path.
%The next lemma shows that each $a$-edge is either part of a double edge or on a loop/path.

\begin{lemma}
Each $a$-dimer on $D_m$ after squishing is either part of a double edge, a loop or a path. 

\end{lemma}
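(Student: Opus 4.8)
The plan is to argue locally at each $b$-face, using the parity condition on the number of incident $a$-dimers together with the structure of the mirrors. First I would observe that, by the divergence-free property of dimer coverings on the Aztec diamond graph and the fact that $b$-dimers are removed by squishing, every $b$-face has exactly $0$, $2$, or $4$ incident $a$-dimers after squishing, as already noted in the text (odd counts are impossible because each $b$-vertex needs to be covered). So an $a$-dimer not in a double edge has, at each of its two endpoints, either another $a$-dimer sharing an adjacent $b$-face or else that endpoint lies on the boundary face of $D_m$. This is exactly the local data needed to extend an $a$-dimer into a maximal chain.

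The key step is to run a chain-following argument. Fix an $a$-dimer $e$ that is not part of a double edge. At each endpoint of $e$, I follow the unique continuation prescribed by the combinatorial rule at the adjacent $b$-face: at a $b$-face with exactly two incident $a$-dimers the continuation is unambiguous; at a $b$-face with four incident $a$-dimers (a meeting point) the mirror, defined as the line between the two $a$-faces of lowest $a$-height, determines which of the two candidate $a$-dimers continues the chain on each side of it. Iterating this procedure in both directions from $e$ produces a maximal sequence of distinct $a$-edges $(e_1, e_3, \dots, e_{2k-1})$ linked by distinct uncovered $b$-edges at distinct $b$-faces. Since $\tilde D_m$ is finite, this process must terminate, and there are only two ways it can do so: either the chain closes up on itself (returning to $e$ with matching orientation), in which case, after checking the distinctness conditions forced by the mirror convention, it satisfies the definition of a loop of length $k \geq 4$; or it reaches the boundary face of $D_m$ at both ends, in which case $e_1$ and $e_{2k-1}$ are incident to the boundary face and it satisfies the definition of a path of length $k \geq 1$. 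I would note that the distinctness of the $b$-edges and $b$-faces along the chain is precisely what the mirror convention guarantees: without mirrors a chain could revisit a $b$-face, but with the mirror in place the two "strands" through a four-valent vertex are separated, so each chain uses each $b$-face at most once. One also checks the chain cannot terminate in any other way — there is no $a$-vertex of degree one inside $\tilde D_m$ after squishing, since every non-boundary white and black vertex is covered by exactly one dimer, so a chain can only stop at the boundary.

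Finally I would assemble these observations: every $a$-dimer not in a double edge lies in exactly one such maximal chain (uniqueness follows because the continuation rule, once the mirrors are fixed, is deterministic at every step and reversible), and each maximal chain is either a loop or a path by the dichotomy above. Hence every $a$-dimer on $D_m$ after squishing is part of a double edge, a loop, or a path, as claimed. The main obstacle is the bookkeeping at meeting points: one must verify carefully that the mirror convention — lowest $a$-height faces joined by the mirror line — does produce globally consistent, non-self-overlapping loops and paths, and in particular that the orientation inherited from the prescribed white-to-black arrows is compatible with closing a loop (so that a chain which returns to its start really does so coherently rather than reversing). This is exactly the point flagged in the footnote about a "careful choice", and it is where the local case analysis in Fig.~\ref{fig:localconfigs} does the real work.
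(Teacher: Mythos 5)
Your proof is correct and follows essentially the same route as the paper's: a local parity analysis at each $b$-face (zero, two or four incident $a$-dimers) combined with the mirror convention at four-valent meeting points to resolve ambiguities. The paper phrases the conclusion more tersely via $a$-height differences around each $b$-face, whereas you make the chain-extension and finite-termination dichotomy (close up into a loop versus hit the boundary at both ends) explicit, but the underlying argument is the same.
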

\begin{proof}

From the squishing procedure, there are either zero, two or four $a$-dimers incident to each $b$-face.  
If there are two incident $a$-dimers to a $b$-face, then there must be a dimer covering a $b$-edge on this face,  with the $a$-dimers forming either a double edge or the $a$-dimers sharing no common incident $a$-face of $D_m$. In the former case, a nearest neighboring $b$-face must have at least two incident $a$-dimers while in the latter case, there is an $a$-height difference which means the presence of a loop or a path. 
 If there are four incident $a$-dimers to a $b$-face, then there are three cases given by having
\begin{enumerate}
\item two adjacent double edges, 
\item  a double edge incident to a loop or a path, 
\item two loops or paths. 
\end{enumerate}
For each of these possibilities, see Fig.~\ref{fig:localconfigs}. The first case is immediate and notice that it is impossible for these double edges to have angles $\pm \pi/2$ from each other.    To see the second case, if there is a double edge incident to a $b$-face and the remaining two vertices of that $b$-face are not incident to another double edge, then the two remaining $a$-dimers are not incident to the same $a$-face and hence, a loop or a path is formed due to an $a$-height difference. When there are no double edges incident to a $b$-face, it follows that there is exactly one $a$-dimer in each direction protruding out of the $b$-face, which corresponds to a mirror. This means that there are $a$-height differences  giving two separate loops or paths.

\end{proof}

For the rest of this subsection, we suppose that we have applied the squishing procedure.
From the definition of the $a$-height function and the proof of the above lemma, we see that a path cannot meet itself (this type of self-intersection is a loop and a path), nor can it  meet another path, unless both paths separate the same $a$-height.  If two paths separate the same $a$-height, then these paths can meet at the meeting points (our convention using mirrors at the meeting points defines each path uniquely). 

Each boundary face on the top and bottom boundaries of $\tilde{D}_m$ induces an oriented path that terminates on either the left or right boundaries of $\tilde{D}_m$, with each path separating a different $a$-height. Since the paths on the bottom (resp. top) boundary separate different heights, it follows that the paths on the bottom (resp. top) boundary cannot meet at a vertex in $\tilde{D}_m$.   It is possible (combinatorially), that one path starting from the bottom boundary and one path starting from the top boundary meet at mulitple vertices in $\tilde{D}_m$, which only happens if they separate the same $a$-height. Note that due to the height increasing from left to right on the bottom boundary and decreasing from left to right on the top boundary, only one such pairing is combinatorially possible. 

For a dimer covering $d$ of $D_m$, label $\tilde{\Gamma}_{i,m}=\tilde{\Gamma}_{i,m} (\tilde{d})$ to be the oriented paths which separate the $a$-height $4i$ and $4i+4$ for $0 \leq i \leq 2m-1$. The trajectories of these paths naturally partition $\tilde{D}_m$ into sets of faces which we call \emph{corridors}, so that each face in the (squished) Aztec diamond belongs to a corridor, which is captured in the next definition. 
\begin{defi}
Let $\mathcal{C}_0=\mathcal{C}_0(\tilde{d})$ be all the $a$-faces in $\tilde{D}_m$ bounded between $\tilde{\Gamma}_{0,m}$ and the boundary of $\tilde{D}_m$, $\mathcal{C}_i=\mathcal{C}_i(\tilde{d})$ be all $a$-faces in $\tilde{D}_m$  bounded between $\tilde{\Gamma}_{i-1,m}$ and $\tilde{\Gamma}_{i,m}$ for $0<i\leq  2m-1$, and $\mathcal{C}_{2m}=\mathcal{C}_{2m}(\tilde{d})$ be all $a$-faces in $\tilde{D}_m$ bounded by $\tilde{\Gamma}_{2m-1,m}$ and the boundary of $\tilde{D}_m$.  

For an $a$-face $f \in \mathcal{C}_i$ and $0  \leq i\leq 2m$, we denote
	\begin{equation*}
		h^a_\mathtt{c}(f)=4 i,
	\end{equation*}
	which is called the \emph{corridor height} of the face $f \in \mathcal{C}_i$.  
\end{defi}
It follows from the above discussion that $a$-height function is the sum of the contribution from loops and the corridor height, that is, for
$f \in \tilde{D}_m $ we have
\begin{equation}\label{loopcorridor}
h^a(f)=h^a_\mathtt{c}(f)+h^a_l(f). 
\end{equation}

\section{Main Theorem}\label{sec:mainthm}

Before stating the main theorem, we introduce some notation.
We have the following constant from~\cite{CJ:16} 
\begin{equation} \label{eq:parameterc}
c=\frac{a}{(1+a^2) }.
\end{equation}
 Since we are interested in the rough-smooth boundary, we fix
 $\xi=-\frac{1}{2}\sqrt{1-2c}$ and set
\begin{equation} \label{eq:scalingparameters}
c_0=\frac{(1-2c)^{\frac{2}{3}}}{(2c(1+2c))^{\frac{1}{3}}}, \hspace{5mm} 
 \lambda_1 =  \frac{\sqrt{1-2c}}{2c_0} \hspace{5mm} \mbox{and} \hspace{5mm}
\lambda_2=\frac{(1-2c)^{\frac{3}{2}}}{2c c_0^2}.
\end{equation}
The term $\xi$ can be thought of as the asymptotic parameter which puts the analysis at the rough-smooth boundary after re-scaling (along the main diagonal in the third quadrant of the Aztec diamond). The terms $\lambda_1$ and $\lambda_2$ are scale parameters, as found in~\cite{CJ:16}.

For the rest of this paper, we introduce $M=M(m) \to \infty$ slowly as $m\to \infty$, but with $M^4 (\log m)^8/m^{1/3} \to 0$ as $m\to \infty$, for example, we could have $M=(\log m)^\gamma$ where $\gamma>0$.   Recall that $\beta_1< \dots < \beta_{L_1}$, $L_1 \geq 1$ are given fixed real numbers and $A_p=[\alpha_p^l,\alpha_p^r]$ for $\alpha_p^l < \alpha_p^r$ and $1 \leq p \leq L_2$ are finite disjoint intervals in $\mathbb{R}$.   We want to place scaled versions of the intervals $\{\beta_q \} \times A_p$  approximately at the rough-smooth boundary so that we get intervals between $a$-faces.  To be more precise, 
introduce
\begin{equation}
\beta_m(q,k)=2\lfloor\beta_q \lambda_2 (2m)^{2/3} + k \lambda_2 (\log m )^2 \rfloor, \hspace{5mm}\mbox{and} 
\end{equation}
\begin{equation}
\rho_m=4 \lfloor m(1+\xi)\rfloor, \hspace{5mm} \tau_m(q)=\lfloor\beta_q^2 \lambda_1 (2m)^{1/3}\rfloor.
\end{equation}
where $1 \leq k \leq M$. The additional parameter $k$ is for notational convenience later in the paper and is not needed (that is, set $k=1$) in order to state of the results of this paper.

 We also need the following notation for $a$-faces. Recall that $e_1=(1,1)$ and $e_2=(-1,1)$. 
Define the $a$-faces for $1 \leq p \leq L_2$, $1 \leq q \leq L_1$ and $1 \leq k_1,k_2 \leq M$
\begin{equation} \label{eq:facesJl}
J_{p,q,k_1,k_2}^l= (\rho_m+2 \lfloor \alpha_p^l \lambda_1 (2m)^{1/3}-\lambda_1 k_1 (\log m)^2 \rfloor -1 -2 \tau_m (q)) e_1 - \beta_m(q,k_2) e_2
\end{equation}
and
\begin{equation}\label{eq:facesJr}
J_{p,q,k_1,k_2}^r= (\rho_m+2 \lfloor \alpha_p^r \lambda_1 (2m)^{1/3} +\lambda_1 k_1 (\log m)^2 \rfloor +1 -2 \tau_m (q)) e_1 - \beta_m(q,k_2) e_2.
\end{equation}
Again, the additional parameters $k_1$ and $k_2$ are for convenience later in the paper. Let $\mathbb{P}_{\mathrm{Az}}$ denote the probability measure with respect to the two-periodic Aztec diamond and for $1\leq p \leq L_2$, $1 \leq q \leq L_1$, let 
\begin{equation} \label{eq:kappa}
	\kappa_m(\{\beta_q\} \times A_p) =\frac{1}{4} \left( h^a_\mathtt{c} (J^r_{p,q,1,1})-h^a_\mathtt{c} (J^l_{p,q,1,1}) \right).
\end{equation}
 We are now in a position to state precisely the main theorem. 
\begin{thma}\label{thm:main}
	Assume that $a < 1/3$ and that $\kappa_m$ and $\mu_{\mathrm{Ai}}$ are defined as above.  Then, the random variables $\kappa_m(\{\beta_q\}\times A_p)$, $1 \leq p \leq L_2, 1 \leq q \leq L_1$ converge jointly in distribution to the random variables $\mu_{\mathrm{Ai}}(\{\beta_q\}\times A_p)$ as $m \to \infty$. 
\end{thma}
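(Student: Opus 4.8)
The plan is to reduce the statement about corridor heights to the signed-measure statement already established in \cite{BCJ:16}, which says that a suitable average of $a$-height function differences converges to the extended Airy kernel point process. By the decomposition \eqref{loopcorridor}, $h^a = h^a_\mathtt{c} + h^a_l$, so $\kappa_m(\{\beta_q\}\times A_p)$ differs from the $\tfrac14$-scaled $a$-height difference by the loop contribution $\tfrac14(h^a_l(J^r_{p,q,1,1}) - h^a_l(J^l_{p,q,1,1}))$. The first step is therefore to recall the precise statement of the convergence result from \cite{BCJ:16} in the present normalization (with the points $J^l_{p,q,k_1,k_2}$, $J^r_{p,q,k_1,k_2}$, and the averaging over $k_1,k_2 \in \{1,\dots,M\}$ of the $a$-height differences), and to observe that the averaged loop contribution must vanish in the limit while the averaged corridor contribution must match $\kappa_m$ up to an error tending to $0$. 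Concretely, I would introduce the averaged quantity $\tilde\kappa_m(\{\beta_q\}\times A_p) = \tfrac{1}{4M^2}\sum_{k_1,k_2} (h^a_\mathtt{c}(J^r_{p,q,k_1,k_2}) - h^a_\mathtt{c}(J^l_{p,q,k_1,k_2}))$ and its loop analogue, and argue that (i) the averaged $a$-height difference converges to $\mu_{\mathrm{Ai}}$ by \cite{BCJ:16}, (ii) the averaged loop difference converges to $0$ in probability, and (iii) $\tilde\kappa_m - \kappa_m \to 0$ in probability.

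For step (ii), the key input is quantitative control on loops near the rough-smooth boundary: loops are macroscopically rare there (this is the smooth-side/gas phase, where correlations decay exponentially), and the number of loops separating two nearby faces at the interface is typically $0$ with an exponentially small probability of being nonzero; the restriction $a < 1/3$ is what makes the relevant exponential-decay estimates on loop presence strong enough to beat the polynomial number of faces and the slowly growing $M$. I would estimate $\mathbb{E}[|h^a_l(J^r) - h^a_l(J^l)|]$ via a union bound over $b$-faces in the region enclosed between the two points, using that the probability a given $b$-face is "covered" by a loop is controlled by the gas-phase part of the inverse Kasteleyn matrix from \cite{CJ:16}, and the constraint on $M$ ($M^4(\log m)^8/m^{1/3}\to 0$) ensures the accumulated error is $o(1)$. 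Step (iii) is a soft argument: averaging over $k_1,k_2$ only moves the endpoints by $O(\lambda_1 M (\log m)^2)$ and $O(\lambda_2 M(\log m)^2)$ lattice steps, which after rescaling is $o(1)$ on the Airy scale; since $h^a_\mathtt{c}$ changes by a bounded amount each time an endpoint crosses a single path and paths through such a window are again rare on the smooth side, the difference of the averaged and unaveraged corridor heights tends to $0$ in probability. Combining (i)--(iii) with Slutsky's theorem gives joint convergence of $\kappa_m(\{\beta_q\}\times A_p)$ to $\mu_{\mathrm{Ai}}(\{\beta_q\}\times A_p)$.

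The main obstacle I anticipate is step (ii): making rigorous the claim that the averaged loop contribution is negligible. Although loops are "exponentially rare" heuristically, one needs a genuinely uniform estimate on the probability that a loop passes between the relevant faces, uniformly as the base point moves over the $O(m^{2/3})$-long window of interface faces indexed by $\beta_m(q,k_2)$ and over the $O((\log m)^2 M)$-scale perturbations in $k_1, k_2$; and one must ensure the estimate survives the union bound over the $\Theta(M^2)$ averaging terms and the region of faces between $J^l$ and $J^r$. This requires carefully extracting from the asymptotic analysis of the inverse Kasteleyn matrix in \cite{CJ:16} (or \cite{CJ:16}'s simplification of \cite{CY:13}) the gas-phase exponential decay rate and checking that the exponent, which degrades as $a \to 1$, still dominates for $a < 1/3$ — this is precisely where the hypothesis $a \in (0,1/3)$ is consumed. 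The remaining steps are comparatively routine: (i) is a direct citation, (iii) is a continuity/tightness argument, and assembling them is standard.
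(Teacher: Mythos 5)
Your overall reduction (write $h^a=h^a_\mathtt{c}+h^a_l$, kill the loop part, and fall back on the height-function convergence of \cite{BCJ:16}) is the paper's strategy, but your step (ii) as described would fail. The quantity $h^a_l(J^r_{p,q,1,1})-h^a_l(J^l_{p,q,1,1})$ is \emph{not} small in $L^1$: a length-$4$ loop encircling one endpoint but not the other occurs with probability bounded below (small loops have positive density in the smooth phase), so $\mathbb{E}\bigl[|h^a_l(J^r_{p,q,1,1})-h^a_l(J^l_{p,q,1,1})|\bigr]=\Theta(1)$, and no union bound over intervening $b$-faces can make it $o(1)$. What actually saves the argument is cancellation, not rarity: one couples a $(\log m)^2$-scale box around each $J_{p,q,k,1}$ with the full-plane smooth phase (Proposition~\ref{prop:couple}), truncates loop lengths by a Peierls bound (this is where $a<1/3$ enters: at most $3^k$ loops of length $k$ through a point, each of probability at most $a^k$, Lemma~\ref{lem:loopsalmostsure}), uses that $\mathbb{E}_{\mathrm{sm}}[h^a_l]=0$ by the loop-orientation symmetry of the smooth phase, and then runs a \emph{second-moment} bound: the $M$ summands are nearly independent because their boxes are $(\log m)^2$ apart (Proposition~\ref{prop:couple2}), so the variance of the average is $o(1)$ by Chebyshev. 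This law-of-large-numbers mechanism is the reason the averaging over $k$ is indispensable; a first-moment estimate at a single pair of points cannot work.

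Two further gaps. Your step (iii) buries the hardest ingredient: to show the corridor height is unchanged across the averaging window you must rule out \emph{any} path through it, and since a path runs from boundary to boundary this forces a long connected sequence of $a$-dimers inside a $(\log m)^2$-box; after coupling to the smooth phase such a sequence must belong to a loop, and that step requires Theorem~\ref{thm:onecorridor} — the full-plane smooth phase has no biinfinite paths almost surely — which the paper proves via the Temperley/KPW correspondence with directed spanning forests and Wilson's algorithm. This is a substantial result, not a continuity or tightness argument. Finally, \cite{BCJ:16} (Theorem~\ref{thm:previousBCJ}) proves convergence only for the \emph{horizontally} averaged measure $\mu_m$ (averaging over the second index); transferring this to the vertically averaged $\nu_m$, which is what the loop and path estimates actually control, is Proposition~\ref{prop:nu_mconverge} and costs a separate Jensen/Cauchy--Schwarz argument together with a cumulant expansion, so you cannot simply ``recall'' the \cite{BCJ:16} statement for your doubly averaged $\tilde\kappa_m$.
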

Although $\kappa_m$ is in general a signed measure, we expect that with probability tending to 1, it is actually a positive measure.  

It is clear from the definition that the corridors are separated by paths. Hence, $\kappa_m$ in~\eqref{eq:kappa} counts the (signed) number of paths between two points at a distance of order $m^{1/3}$ at the rough-smooth boundary.  The theorem says that counting the number of paths this way defines a signed measure that converges to the extended Airy kernel point process.  We get a signed measure because the paths can backtrack.  However, we expect that the backtracks are small (like loops) and do not have any influence on the scales we are considering. 

The paths $\tilde{\Gamma}_{i,m}$ split into two parts $\tilde{\Gamma}_{i,m}^t$ and $\tilde{\Gamma}_{i,m}^b$ which start from the top and bottom boundaries respectively; see Fig.~\ref{fig:squishAztec12a}.  We expect that for all $a \in (0,1)$ there is an $i_0$, close to $m$, such that $\tilde{\Gamma}_{i_0,m}^b$ ends at the left boundary and $\tilde{\Gamma}_{i_0+1,m}^b$ ends at the right boundary.  Thus, we believe that the path $\tilde{\Gamma}_{i_0,m}^b$ is the last path in the third quadrant in the vicinity of the main diagonal, that is, between this path and the center of the Aztec diamond, there are no paths.  Moreover, we conjecture that for all $a \in (0,1)$ the path $\tilde{\Gamma}_{i_0,m}^b$ converges, after appropriate rescaling, to the Airy process.

The proof of the above theorem involves four main ingredients which are stated in Section~\ref{sec:Auxiliary}, with their proofs postponed until later in the article.   This allows us to give the proof of the main theorem in Section~\ref{sec:mainproof}.  In Section~\ref{sec:nuconverge} we give the proof of the first main ingredient which gives a refinement of the main result in~\cite{BCJ:16}.   In Section~\ref{section:couple}, we give the proof of the second main ingredient which gives couplings between configurations at the rough-smooth boundary with the smooth phase.  In Section~\ref{sec:geometry}, we give the proof of the third main ingredient which says that there are no (full-plane) paths in the smooth phase almost surely. 
In Section~\ref{sec:Peierls}, we give the proof of the final main ingredient, which gives control of the size of the loops provided that $a<1/3$.

\section{Auxiliary Results} \label{sec:Auxiliary}

Before we are in a position to prove Theorem~\ref{thm:main}, we require four ingredients  which are given in the following four subsections.  The proofs of these results are postponed to later in the paper.

\subsection{Multi-line to single line}\label{sec:singtomulti}

In this subsection, we give the asymptotics of the inverse Kasteleyn matrix at the rough-smooth boundary~\cite{CJ:16}, the definition of the random measure defined by taking (single) $a$-height differences on multiple lines used in~\cite{BCJ:16}, and state a result that this random measure is equivalent, at the rough-smooth boundary as $m\to \infty$, to the random measure defined by taking multiple $a$-height differences on a single line.

\subsubsection{The Kasteleyn matrix and its inverse} \label{App:Kast}

The Kasteleyn matrix for a finite planar bipartite graph is a type of signed weighted adjacency matrix whose rows are indexed by the black vertices of the graph and whose columns are indexed by the white vertices of the graph. More precisely, for a graph with white vertices $\tilde{\mathtt{W}}$ and black vertices $\tilde{\mathtt{B}}$ which admits dimer coverings, $K$ is a matrix with entries
\begin{equation}
	K_{\mathtt{b}\mathtt{w}} = \left\{\begin{array}{ll}
		0  & \mbox{if $(\mathtt{w},\mathtt{b})$ is not an edge in the graph}\\
		\mathrm{sgn}(e)w(e) & \mbox{if $e=(\mathtt{w},\mathtt{b})$ is an edge in the graph},
	\end{array} \right.
\end{equation}
where $\mathrm{sgn}(e)$ is chosen according to the \emph{Kasteleyn orientation}, a choice in signs ensures that the product of $K_{\mathtt{b}\mathtt{w}}$ for the edges around each face is negative, and $w(e)$ denotes the edge weight of $e$. 
For the significance of the Kasteleyn matrix for random tiling models, see for example~\cite{Ken:09}.  We will denote $K_{\mathtt{b}\mathtt{w}}=K(\mathtt{b},\mathtt{w})$ and stick to this convention throughout the paper.

For planar bipartite graphs, $G$, the dimers form a determinantal point process~\cite{Ken:97}.   More explicitly,  suppose that $E=\{\mathtt{e}_i\}_{i=1}^r$ is a collection of distinct edges with $\mathtt{e}_i=(\mathtt{b}_i,\mathtt{w}_i)$, where $\mathtt{b}_i$ and $\mathtt{w}_i$ denote black and white vertices. 
\begin{thma}[\cite{Ken:97,Joh17}]\label{localstatisticsthm}
    The dimers form a determinantal point process on the edges of $G$ with correlation kernel $L$ meaning that the probability of observing edges $e_1,\dots, e_r$ is given by 
	$\det L(\mathtt{e}_i,\mathtt{e}_j)_{1 \leq i,j \leq r}$
where
	    $L(\mathtt{e}_i,\mathtt{e}_j) = K({\mathtt{b}_i,\mathtt{w}_i}) K^{-1}({\mathtt{w}_j,\mathtt{b}_i}).$
\end{thma}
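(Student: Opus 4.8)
The statement is the classical theorem of Kenyon; the plan is to derive it from Kasteleyn's determinant formula for the partition function together with Jacobi's identity relating complementary minors of a matrix to minors of its inverse. Throughout, $G$ is finite, $K$ is the Kasteleyn matrix defined above, and by Kasteleyn's theorem in the bipartite planar case (Kasteleyn, Percus) the weighted number of dimer coverings of $G$ is $Z(G)=|\det K|$; in particular $\det K\neq 0$, so $K^{-1}$ exists, which is what makes $L$ well defined in the first place.

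First I would fix distinct edges $\mathtt{e}_1,\dots,\mathtt{e}_r$ with $\mathtt{e}_i=(\mathtt{b}_i,\mathtt{w}_i)$ and observe that a dimer cover of $G$ contains all of $\mathtt{e}_1,\dots,\mathtt{e}_r$ if and only if, after deleting the $2r$ vertices $V_E=\{\mathtt{b}_1,\mathtt{w}_1,\dots,\mathtt{b}_r,\mathtt{w}_r\}$, its restriction is a dimer cover of the induced subgraph $G\setminus V_E$. Since the weights are multiplicative over edges, this gives
\[
\mathbb{P}\big(\mathtt{e}_1,\dots,\mathtt{e}_r\ \text{all present}\big)=\frac{1}{Z(G)}\Big(\prod_{i=1}^r w(\mathtt{e}_i)\Big)\,Z(G\setminus V_E).
\]
Next I would check that the Kasteleyn orientation of $G$ restricts to a Kasteleyn orientation of $G\setminus V_E$: deleting an edge together with its two endpoints merges the two faces it bordered, and the sign product around the merged face is the product of the two old (negative) products divided by the square of the signs carried by the removed edge, hence again negative; iterating over $\mathtt{e}_1,\dots,\mathtt{e}_r$ handles the general case. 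Thus Kasteleyn's theorem applies again and yields $Z(G\setminus V_E)=|\det K_{\widehat B,\widehat W}|$, where $K_{\widehat B,\widehat W}$ denotes $K$ with the rows indexed by $\mathtt{b}_1,\dots,\mathtt{b}_r$ and the columns indexed by $\mathtt{w}_1,\dots,\mathtt{w}_r$ deleted.

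Then I would invoke Jacobi's complementary-minor identity: since $K$ is invertible,
\[
\det K_{\widehat B,\widehat W}=\varepsilon\,\det K\cdot\det\big(K^{-1}(\mathtt{w}_j,\mathtt{b}_i)\big)_{1\le i,j\le r},
\]
with $\varepsilon=\pm1$ depending only on the positions of the deleted rows and columns. Substituting into the displayed probability, the factor $\det K$ cancels against $Z(G)=|\det K|$; then pulling the scalar $\mathrm{sgn}(\mathtt{e}_i)\,w(\mathtt{e}_i)=K(\mathtt{b}_i,\mathtt{w}_i)$ out of the $i$-th row multiplies the determinant by $\prod_i K(\mathtt{b}_i,\mathtt{w}_i)$, which matches $\prod_i w(\mathtt{e}_i)$ up to signs that can be absorbed into $\varepsilon$, leaving
\[
\mathbb{P}\big(\mathtt{e}_1,\dots,\mathtt{e}_r\ \text{all present}\big)=\big|\det\big(K(\mathtt{b}_i,\mathtt{w}_i)K^{-1}(\mathtt{w}_j,\mathtt{b}_i)\big)_{1\le i,j\le r}\big|=\big|\det L(\mathtt{e}_i,\mathtt{e}_j)_{1\le i,j\le r}\big|.
\]

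The main obstacle is precisely this sign bookkeeping: one must show that $\varepsilon$, the Kasteleyn signs $\mathrm{sgn}(\mathtt{e}_i)$, and the two absolute values conspire so that the right-hand side is literally $\det L(\mathtt{e}_i,\mathtt{e}_j)_{i,j}$ with no residual sign. This is a finite but genuinely fiddly computation; the cleanest sanity check is the case $r=1$, where the identity reads $\mathbb{P}(\mathtt{e}\ \text{present})=K(\mathtt{b},\mathtt{w})K^{-1}(\mathtt{w},\mathtt{b})$, a manifestly nonnegative number, and since the left-hand side of the general identity is a probability, the absolute value is seen a posteriori to be unnecessary and the sign must come out consistently. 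The careful treatment of these signs is exactly what is carried out in \cite{Ken:97, Joh17}, which the theorem cites.
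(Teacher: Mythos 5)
This theorem is not proved in the paper at all; it is quoted from \cite{Ken:97,Joh17}, and your proposal follows exactly the classical route of those references: write the correlation as $\prod_i w(\mathtt{e}_i)\,Z(G\setminus V_E)/Z(G)$, identify $Z(G\setminus V_E)$ with a complementary minor of $K$, and apply Jacobi's complementary-minor identity. The overall strategy and the Jacobi step are correctly set up, and your reduction of the remaining work to sign bookkeeping (checked at $r=1$) is the honest assessment of where the fiddliness lies.

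There is, however, one step that fails as written: the claim that the Kasteleyn orientation of $G$ restricts to a Kasteleyn orientation of $G\setminus V_E$, justified by saying that deleting $\mathtt{e}_i$ merges ``the two faces it bordered.'' Deleting the vertices $\mathtt{b}_i,\mathtt{w}_i$ deletes \emph{all} edges incident to them, not just $\mathtt{e}_i$, so every face around either vertex is merged into a single new face; for two interior degree-$4$ vertices that is $4+4-2=6$ faces, and the product of six negative face-products is positive, not negative. Moreover, the Kasteleyn condition for a face of length $2k$ is not simply ``product negative'' once $k>2$, so even the target of your verification is not the right one. The standard repair --- the one implicit in \cite{Ken:97} --- bypasses the face condition on the subgraph entirely: expand $\det K$ over permutations, note that the terms corresponding to matchings containing $E$ factor as $\pm\prod_i K(\mathtt{b}_i,\mathtt{w}_i)\det K_{\widehat{B},\widehat{W}}$, and observe that all matchings of $G\setminus V_E$ carry a common sign in $\det K_{\widehat{B},\widehat{W}}$ because their extensions by $E$ are matchings of $G$ and hence carry a common sign in $\det K$ by Kasteleyn's theorem applied to $G$ alone; this gives $Z(G\setminus V_E)=|\det K_{\widehat{B},\widehat{W}}|$ without any claim about the orientation of the subgraph. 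With that substitution, plus the sign bookkeeping you defer to the references, the argument is complete.
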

%Theorem~\ref{localstatisticsthm} holds for the full-plane~\cite{KOS:06} in the same form, which will be needed below.
{ In what follows below, the graph $G$ will either denote a finite graph such as the Aztec diamond graph or the full-plane ($\mathbb{Z}^2$). }

The \emph{Kasteleyn matrix} for the two periodic Aztec diamond of size $n=4m$ with  parameters $a$ and $b$, denoted by $K_{a,b}$, is given by 
\begin{equation} \label{pf:K}
      K_{a,b}(x,y)=\left\{\begin{array}{ll}
		     a (1-j) + b j  & \mbox{if } y=x+e_1, x \in \mathtt{B}_j \\
		     (a j +b (1-j) ) \mathrm{i} & \mbox{if } y=x+e_2, x \in \mathtt{B}_j\\
		     a j + b (1-j)  & \mbox{if } y=x-e_1, x \in \mathtt{B}_j \\
		     (a (1-j) +b j ) \mathrm{i} & \mbox{if } y=x-e_2, x \in \mathtt{B}_j\\
			0 & \mbox{if $(x,y)$ is not an edge}
		     \end{array} \right.
\end{equation}
where $\mathrm{i}^2=-1$ and $j\in \{0,1\}$. A formula for the inverse Kasteleyn matrix for the two-periodic Aztec diamond was derived in~\cite{CY:13} and a simplification given in~\cite{CJ:16}.     Before giving the asymptotics of the inverse Kasteleyn matrix at the rough-smooth boundary, we give the full-plane smooth phase inverse Kasteleyn matrix (with the same edge weight conventions).   Denote $\mathbb{P}_{\mathrm{sm}}$ to be the probability measure in the full-plane smooth phase.  Define the white and black vertices on the plane by
{
\begin{equation} \label{white:parityplane}
	\mathtt{W}_i^*= \{(x,y) \in \mathbb{Z}^2: x \mod 2=1,y\mod=0, x+y \mod4=2i+1\} 
\end{equation}
and
\begin{equation}\label{black:parityplane}
	\mathtt{B}_i^*= \{(x,y) \in \mathbb{Z}^2: x \mod 2=0,y\mod=1,  x+y \mod4=2i+1\} 
\end{equation}
where $  i \in\{0,1\}$. Recall that $b=1$. }
For $j \in \{0,1\}$ and $w \in \mathtt{W}_j^*$, the weight of the edge $(w, w+(-1)^k e_i)$ is given by $a^{(1-k)(1-j)+kj}$ for $k \in \{0,1\}$, $i\in \{1,2\}$, which is the same convention as the two-periodic Aztec diamond.  Let
\begin{equation} \label{ctilde}
\tilde{c} (u_1,u_2)=2(1+a^2) + a(u_1+u_1^{-1})(u_2+u_2^{-1}),
\end{equation} 
which is related to the so-called \emph{characteristic polynomial} for the dimer model~\cite{KOS:06}; see~\cite[(4.11)]{CJ:16} for an explanation. Write
\begin{equation}
h(\eps_1,\eps_2)=\eps_1(1-\eps_2)+\eps_2(1-\eps_1),
\end{equation}
and for the rest of this paper, $\Gamma_R$ denotes a positively oriented circle of radius $R$ around the origin.  
The full-plane smooth phase inverse Kasteleyn matrix is given by
\begin{equation}\label{smoothphaseeqn}
\mathbb{K}^{-1}_{1,1}(x,y)=-\frac{\mathrm{i}^{1+h(\eps_x,\eps_y)}}{(2 \pi \mathrm{i})^2} 
\int_{\Gamma_1} \frac{du_1}{u_1}  \int_{\Gamma_1} \frac{du_2}{u_2} \frac{a^{\eps_y} u_2^{1-h(\eps_x,\eps_y)} +a^{1-\eps_y} u_1 u_2^{h(\eps_x,\eps_y)}}{\tilde{c}(u_1,u_2) u_1^{\frac{x_1-y_1+1}{2}} u_2^{\frac{x_2-y_2+1}{2}}},
\end{equation}
where 
$x=(x_1,x_2)\in \mathtt{W}_{\eps_x}^*$ and $y=(y_1,y_2)\in \mathtt{B}_{\eps_y}^*$ with $\eps_x,\eps_y \in \{0,1\}$; see~\cite[Section 4]{CJ:16} for details and connections with~\cite{KOS:06}.  Note that in the above formula, we can replace $x\in \mathtt{W}_{\eps_x}^*$ and $y\in \mathtt{B}_{\eps_y}^*$ by $x\in \mathtt{W}_{\eps_x}$ and $y\in \mathtt{B}_{\eps_y}$ for $\eps_x,\eps_y \in \{0,1\}$ and this gives the same formula.

We can now give our formulas for the asymptotics of the inverse Kasteleyn matrix at the rough-smooth boundary.  
We set 
\begin{equation} \label{G}
\mathcal{C}=\frac{1}{\sqrt{2c}} (1-\sqrt{1-2c} ).
\end{equation}

%Here, $\mathtt{W}_{\eps_x}$ and $\mathtt{B}_{\eps_y}$ are considered to be on the plane but with the same parity condition as given in~\eqref{white:parity} and~\eqref{black:parity}.

 From~\cite{CJ:16}, it is natural to write
\begin{equation}\label{eq:Ksplit}
K^{-1}_{a,1} (x,y)=\mathbb{K}^{-1}_{1,1}(x,y)-\mathbb{K}_{\mathrm{A}}(x,y)
\end{equation}
which  defines  $\mathbb{K}_{\mathrm{A}}$.  The full expression for $\mathbb{K}_{\mathrm{A}}$ is complicated and will not be given in full here; see~\cite[Theorem 2.3]{CJ:16} and~\cite[Proposition 6.2]{DK:17}.

Let $\alpha_x,\alpha_y,\beta_x,\beta_y \in \mathbb{R}$, $k_x^1,k_x^2,k_y^1,k_y^2 \in \mathbb{Z}$ and $f_x,f_y \in \mathbb{Z}^2$. We will use the following scaling of $x$ and $y$ at the rough-smooth boundary  
\begin{equation}	\begin{split} 
	x&=(\rho_m +2\lfloor \alpha_x \lambda_1 (2m)^{1/3}+k_x^1 \lambda_1 (\log m)^2 \rfloor)e_1\\& -(2\lfloor\beta_x \lambda_2 (2m)^{2/3}+k_x^2 \lambda_2 (\log m)^2 \rfloor)e_2 +f_x\\
		y&=(\rho_m +2\lfloor\alpha_y \lambda_1 (2m)^{1/3}+k_y^1 \lambda_1 (\log m)^2 \rfloor)e_1 \\&-(2\lfloor[\beta_y \lambda_2 (2m)^{2/3}+k_y^2 \lambda_2 (\log m)^2\rfloor)e_2 +f_y.
\end{split}\label{xyscaling}
\end{equation}

We introduce the notation 
\begin{equation}
\mathtt{g}_{\eps_1,\eps_2} =
\left\{
\begin{array}{ll}
\frac{\mathrm{i} \left(\sqrt{a^2+1}+a\right)}{1-a}
 &  \mbox{if } (\eps_1,\eps_2)=(0,0) \\
\frac{\sqrt{a^2+1}+a-1}{\sqrt{2a} (1-a) }
 & \mbox{if } (\eps_1,\eps_2)=(0,1) \\
-\frac{\sqrt{a^2+1}+a-1}{\sqrt{2a} (1-a) }
&\mbox{if } (\eps_1,\eps_2)=(1,0)\\
\frac{\mathrm{i}\left(\sqrt{a^2+1}-1\right)}{(1-a) a}
&\mbox{if } (\eps_1,\eps_2)=(1,1).
\end{array}
\right.
\end{equation}
From~\cite[Theorem 2.7]{CJ:16} and its proof, we have
\begin{thma}[\cite{CJ:16}]
\label{Airyasymptotics}
Assume that $x=(x_1,x_2) \in \mathtt{W}_{\eps_x}$ and $y=(y_1,y_2) \in \mathtt{B}_{\eps_y}$ are given by~\eqref{xyscaling} with $\eps_x,\eps_y \in\{0,1\}$.  Furthermore, assume that $|\alpha_x|$, $|\alpha_y|$, $|\beta_x|$, $|\beta_y|$, $|f_x|$, $|f_y|\leq C$ for some constant $C>0$ and that $|k_x^1|,|k_x^2|,|k_y^1|,|k_y^2|\leq M$ with $M$ as above.  Then, as $m \to \infty$
\begin{equation}
\begin{split}\label{As1}
\mathbb{K}_{\mathrm{A}}(x,y)&= \mathrm{i}^{y_1-x_1+1} \mathcal{C}^{\frac{-2-x_1+x_2+y_1-y_2}{2}} c_0 \mathtt{g}_{\eps_x,\eps_y} e^{\alpha_y \beta_y -  \alpha_x \beta_x -\frac{2}{3} (\beta_x^3-\beta_y^3)}
\\ &\times 
(2m)^{-\frac{1}{3}} (\tilde{\mathcal{A} }(\beta_x , \alpha_x+\beta_x^2; \beta_y,\alpha_y+\beta_y^2)+o(1)).
\end{split}
\end{equation}
Also, as $m\to \infty$,
\begin{equation}
\begin{split}\label{As2}
\mathbb{K}_{1,1}^{-1}(x,y)&= \mathrm{i}^{y_1-x_1+1} \mathcal{C}^{\frac{-2-x_1+x_2+y_1-y_2}{2}} c_0 \mathtt{g}_{\eps_x,\eps_y} e^{\alpha_y \beta_y -  \alpha_x \beta_x -\frac{2}{3} (\beta_x^3-\beta_y^3)}\\ &\times (2m)^{-\frac{1}{3}}  ( \phi_{\beta_x,\beta_y} ( \alpha_x+\beta_x^2 ;\alpha_y+\beta_y^2) +o(1)).
\end{split}
\end{equation}

\end{thma}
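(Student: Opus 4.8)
Up to the inclusion of the bounded shifts $f_x,f_y$ and the auxiliary parameters $k_x^i,k_y^i$, this is the content of \cite[Theorem~2.7]{CJ:16}, so the plan is to extract it from there and to verify that the slightly more general scaling \eqref{xyscaling} does not affect the conclusion; it is worth recalling the structure of that argument both to isolate what must be added and to make the origin of the constants transparent. By \cite[Theorem~2.3]{CJ:16}, $\mathbb{K}^{-1}_{1,1}(x,y)$ (explicitly \eqref{smoothphaseeqn}) and $\mathbb{K}_{\mathrm{A}}(x,y)$ are double contour integrals of the schematic form
\[
\mathbb{K}_{\mathrm{A}}(x,y)=\frac{(\text{prefactor})}{(2\pi\mathrm{i})^2}\int_{|w_1|=r_1}\int_{|w_2|=r_2}\frac{G(w_1,w_2;x,y)}{H_m(w_1)\,H_m(w_2)}\,e^{m(F(w_1)-F(w_2))}\,dw_1\,dw_2,
\]
where $F$ is an explicit action, $H_m$ encodes the Aztec-diamond boundary data, and $G$ is an algebraic factor depending on $\eps_x,\eps_y$ and on the shifts; the full-plane term \eqref{smoothphaseeqn} is of similar shape without the size factor $H_m$.

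\emph{Saddle point and Airy rescaling.} The choice $\xi=-\tfrac12\sqrt{1-2c}$ is exactly the one for which $F$ has a degenerate saddle point $w_*$ of order $\tfrac32$ (two ordinary saddles coalescing). First I would deform the $w_1$- and $w_2$-contours to pass through $w_*$ along the steepest descent directions; in the course of this deformation one may cross a zero of the characteristic polynomial $\tilde{c}$ of \eqref{ctilde}, whose residue --- present only when $\beta_x<\beta_y$ --- produces the Gaussian part $\phi_{\beta_x,\beta_y}$ of \eqref{eq:Airyphi}, while the integral localised to an $O(m^{-1/3})$-neighbourhood of $w_*$ produces the Airy part. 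Near $w_*$ I would Taylor-expand $F$ to cubic order and rescale $w_j=w_*+O((2m)^{-1/3})$ consistently with \eqref{xyscaling}: the $(2m)^{2/3}$-scale of the $\beta$-coordinates feeds the linear (``time'') term, while the $(2m)^{1/3}$-scale of the $\alpha$-coordinates, after completing the square in the quadratic remainder, yields the shifted spatial arguments $\alpha_x+\beta_x^2$ and $\alpha_y+\beta_y^2$. The resulting double integral collapses, via $\frac{1}{z_1-z_2}=\int_0^{\infty}e^{-\lambda(z_1-z_2)}\,d\lambda$ on appropriately ordered contours, to the representation \eqref{eq:Airymod} of $\tilde{\mathcal{A}}(\beta_x,\alpha_x+\beta_x^2;\beta_y,\alpha_y+\beta_y^2)$, and carrying the prefactor and the value $G(w_*,w_*)$ through the rescaling reproduces the constant $\mathrm{i}^{y_1-x_1+1}\mathcal{C}^{(-2-x_1+x_2+y_1-y_2)/2}c_0\,\mathtt{g}_{\eps_x,\eps_y}e^{\alpha_y\beta_y-\alpha_x\beta_x-\frac{2}{3}(\beta_x^3-\beta_y^3)}(2m)^{-1/3}$ of \eqref{As1}: here $\mathtt{g}_{\eps_x,\eps_y}$ is the value of the algebraic part of the integrand at $w_*$, and $\mathcal{C}^{(\cdots)/2}$ is the gauge that renders the kernel translation quasi-invariant. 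Formula \eqref{As2} follows from the same analysis applied to \eqref{smoothphaseeqn}, which has no coalescing saddle.

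\emph{Uniformity and the extra parameters.} Finally I would check that the error $o(1)$ is uniform over $|\alpha_\bullet|,|\beta_\bullet|,|f_\bullet|\le C$ and $|k_\bullet^i|\le M$ with $M=M(m)\to\infty$ satisfying $M^4(\log m)^8/m^{1/3}\to0$. The shifts $f_x,f_y$ perturb only $G$ and the residue prefactor by a bounded amount and contribute $O(m^{-1/3})$ after rescaling, so they are harmless. The parameters $k_\bullet^i$ shift each $\alpha_\bullet$ by $O((\log m)^2/m^{1/3})$ and each $\beta_\bullet$ by $O((\log m)^2/m^{2/3})$; the hypothesis on $M$ is precisely what forces the induced change in the cubic Taylor remainder of $mF$ --- of order $(\text{shift})^3 m$ together with the cross terms --- to be $o(1)$ uniformly, so every estimate of \cite{CJ:16} survives with these perturbations absorbed into the error.

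\emph{Main obstacle.} The genuinely delicate part --- already the technical heart of \cite{CJ:16}, which I would simply cite --- is the global contour surgery: checking that steepest descent contours for $w_1,w_2$ can be chosen inside the correct annuli, avoiding the zeros of $H_m$ and the branch structure of $F$; that the residue bookkeeping yielding $\phi$ (its sign and the indicator $\mathbbm{I}_{\beta_x<\beta_y}$ included) is correct; and that all contributions away from $w_*$ are exponentially negligible uniformly in the parameters. The only genuinely new difficulty here is the uniformity over the slowly growing $M$, handled as above.
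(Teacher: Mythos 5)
Your proposal is correct and matches the paper's treatment: the paper simply imports this statement from \cite[Theorem 2.7]{CJ:16} and its proof, remarking only that the slightly modified formulation (the shifts $f_x,f_y$ and the parameters $k_\bullet^i$ bounded by $M$) makes no difference, which is exactly the verification you carry out. Your additional sketch of the saddle-point mechanism and the explicit uniformity check over $M$ is a faithful account of the cited argument rather than a different route.
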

The formulation given for the above theorem is slightly different from that in~\cite{CJ:16}, however, this modification makes no difference.

\subsubsection{Definition of random measures } 
We give the definition of $\mu_m$ which is equivalent to the definition in~\cite{BCJ:16} but in a simpler form as well as random measure that will be used in the proof of the main theorem.  The reason for the simplification for $\mu_m$ is that in~\cite{BCJ:16},  we stated the formulas in terms of particles to coincide with determinantal point processes. 
Define
\begin{equation}
\begin{split}
\mu_m(\{ \beta_q \} \times A_p) &= \frac{1}{4 M} \sum_{k=1}^{M} h(J_{p,q,1,k}^r) - h(J_{p,q,1,k}^l).
\end{split}
\end{equation}
The random signed measure $\mu_m$ can be thought of as a `horizontal averaging' of the height function.  The following theorem from~\cite{BCJ:16} holds for our choice of $M$ in this paper, it is easy to see that our choice of $M$ in this paper is a restriction of the one given in~\cite{BCJ:16}. 
\begin{thma}[Theorem 1.1 in~\cite{BCJ:16}] \label{thm:previousBCJ}
	As $m \to \infty$ and for all $a \in (0,1)$, $\mu_m$ converges to the extended Airy kernel point process in the sense that  there exists $R>0$ such that
\begin{equation}
	\begin{split}
		&\lim_{m \to \infty}\mathbb{E}\bigg[\exp \bigg( {\sum_{p=1}^{L_2} \sum_{q=1}^{L_1} w_{p,q} \mu_m (\{ \beta_q \} \times A_p)}\bigg)\bigg]\\ &= \mathbb{E}\bigg[\exp \bigg( {\sum_{p=1}^{L_2} \sum_{q=1}^{L_1} w_{p,q} \mu_{\mathrm{Ai}} (\{ \beta_q \} \times A_p)} \bigg) \bigg]
	\end{split}
\end{equation}
with $w_{p,q} \in \mathbb{C}$ such that  $|w_{p,q}| <R $ for all  $1 \leq p \leq L_2$, $1 \leq q \leq L_1$. 
\end{thma}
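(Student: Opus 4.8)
\emph{Proof idea, Step 1 (height differences as domino counts).} The plan is to recognise $\mu_m$ as a linear statistic of the determinantal point process of dominoes, express its joint Laplace transform as a Fredholm determinant on a finite set of edges, substitute the splitting $K^{-1}_{a,1}=\mathbb{K}^{-1}_{1,1}-\mathbb{K}_{\mathrm{A}}$ of~\eqref{eq:Ksplit}, and pass to the limit by feeding the asymptotics of Theorem~\ref{Airyasymptotics} into the Fredholm expansion, identifying the limiting determinant with the right-hand side of~\eqref{eq:extendedAirydef}; this is carried out, in the form adapted to the present choice of $M$, in Section~\ref{sec:nuconverge}. Concretely, for each $p,q$ and each $1\le k\le M$, fix a lattice path in the Aztec diamond graph joining the $a$-faces $J^l_{p,q,1,k}$ and $J^r_{p,q,1,k}$ and running monotonically in the $e_1$-direction. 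Along such a path the arrows carried by the crossed edges follow a fixed deterministic pattern relative to the direction of traversal, so the height-change rule gives
\[
h(J^r_{p,q,1,k})-h(J^l_{p,q,1,k})=4\sum_{\mathtt{e}\in\Pi_{p,q,k}}\sigma_{\mathtt{e}}\,\mathbbm{1}[\mathtt{e}\in d]+D_{p,q,k},
\]
where $d$ is the dimer configuration, $\sigma_{\mathtt{e}}\in\{\pm1\}$ and $D_{p,q,k}$ are deterministic, and $\Pi_{p,q,k}$ is a finite set of edges whose white endpoints are of the form~\eqref{xyscaling} with $\beta$-coordinate $\beta_q$ and $\alpha$-coordinate sweeping the rescaled interval $A_p$. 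Hence $\sum_{p,q}w_{p,q}\mu_m(\{\beta_q\}\times A_p)$ equals $\sum_{\mathtt{e}}v_{\mathtt{e}}\mathbbm{1}[\mathtt{e}\in d]$ plus a deterministic constant, where, after arranging the $\Pi_{p,q,k}$ to be pairwise disjoint, $v_{\mathtt{e}}=\sigma_{\mathtt{e}}w_{p,q}/M$ for $\mathtt{e}\in\Pi_{p,q,k}$.

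\emph{Step 2 (Laplace transform as a Fredholm determinant).} Writing $\mathcal{E}_m=\bigcup_{p,q,k}\Pi_{p,q,k}$ and $V=\mathrm{diag}(v_{\mathtt{e}})_{\mathtt{e}\in\mathcal{E}_m}$, the determinantal structure of the dimer model (Theorem~\ref{localstatisticsthm}) gives, for $|w_{p,q}|$ small enough,
\[
\mathbb{E}\Bigl[\exp\Bigl(\sum_{p,q}w_{p,q}\mu_m(\{\beta_q\}\times A_p)\Bigr)\Bigr]=C_m\,\det\bigl(\mathbbm{I}+(e^{V}-1)\mathbb{L}_m\bigr)_{\ell^2(\mathcal{E}_m)},
\]
where $C_m$ is the deterministic constant from Step 1 and $\mathbb{L}_m(\mathtt{e},\mathtt{e}')=K_{a,1}(\mathtt{b},\mathtt{w})K^{-1}_{a,1}(\mathtt{w}',\mathtt{b})$ with $\mathtt{e}=(\mathtt{b},\mathtt{w})$, $\mathtt{e}'=(\mathtt{b}',\mathtt{w}')$. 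The hypothesis $|w_{p,q}|<R$ keeps $\|e^{V}-1\|$ small, which is what makes the Fredholm expansion absolutely convergent and amenable to trace-class estimates; one also checks, using the macroscopic height function together with the centerings $\rho_m$, $\tau_m(q)$, $\beta_m(q,k)$, that $C_m\to1$.

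\emph{Step 3 (splitting, rescaling, and the limit).} Insert $K^{-1}_{a,1}=\mathbb{K}^{-1}_{1,1}-\mathbb{K}_{\mathrm{A}}$ and conjugate $\mathbb{L}_m$ by the scalar gauge factors $\mathrm{i}^{\,y_1-x_1+1}\mathcal{C}^{(-2-x_1+x_2+y_1-y_2)/2}c_0\,\mathtt{g}_{\eps_x,\eps_y}e^{\alpha_y\beta_y-\alpha_x\beta_x-\frac23(\beta_x^3-\beta_y^3)}$ appearing in Theorem~\ref{Airyasymptotics}, which leaves every minor of the determinant unchanged. Rescaling the kernel by $(2m)^{1/3}$ and the edge positions by the map sending $\mathtt{e}$ to the pair $(\beta,\alpha+\beta^2)$, Theorem~\ref{Airyasymptotics} shows that the $\mathbb{K}_{\mathrm{A}}$-part of $\mathbb{L}_m$ converges pointwise to $\tilde{\mathcal{A}}$ of~\eqref{eq:Airymod} and the $\mathbb{K}^{-1}_{1,1}$-part to $\phi$ of~\eqref{eq:Airyphi}, so after absorbing the overall sign the rescaled kernel converges to $\mathcal{A}=\tilde{\mathcal{A}}-\phi$ of~\eqref{eq:extendedAiry}. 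Promoting this to convergence of Fredholm determinants, one replaces the finite sums in the expansion by integrals: the sums over $\alpha$ on scale $(2m)^{1/3}$ become $\int_{A_p}$, and the $\tfrac1{4M}\sum_{k=1}^{M}$-average collapses onto the single point $\beta_q$ because the $M$ lines are separated only by $\sim\lambda_2(\log m)^2=o((2m)^{2/3})$; here the constraint $M^4(\log m)^8/m^{1/3}\to0$ is used to keep the $k\lambda_i(\log m)^2$ corrections in~\eqref{xyscaling} negligible, so that the $o(1)$ error terms in Theorem~\ref{Airyasymptotics} are uniform across the $M$ auxiliary lines and across the range of $\alpha,\beta$. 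One concludes
\[
\lim_{m\to\infty}\mathbb{E}\Bigl[\exp\Bigl(\sum_{p,q}w_{p,q}\mu_m(\{\beta_q\}\times A_p)\Bigr)\Bigr]=\det\bigl(\mathbbm{I}+(e^{\Psi}-1)\mathcal{A}\bigr)_{L^2(\{\beta_1,\dots,\beta_{L_1}\}\times\mathbb{R})},
\]
which by~\eqref{eq:extendedAirydef} equals $\mathbb{E}\bigl[\exp(\sum_{p,q}w_{p,q}\mu_{\mathrm{Ai}}(\{\beta_q\}\times A_p))\bigr]$; convergence of these analytic functions on a complex neighbourhood of the origin yields joint convergence in distribution.

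\emph{Main obstacle.} The core difficulty is the passage from pointwise kernel asymptotics to convergence of the Fredholm determinant: one must control the $o(1)$ errors of Theorem~\ref{Airyasymptotics} fully uniformly over the whole range of scaling parameters and over the $M$ auxiliary lines simultaneously — this is exactly where the growth rate of $M$, the $(\log m)^2$ corrections, and the $m^{1/3}$ error scale have to be balanced — and then extract from the resulting bounds a trace-class dominating kernel, using the superexponential decay of $\mathrm{Ai}$ for the $\tilde{\mathcal{A}}$-part and the Gaussian factor in $\phi$ for the $\phi$-part. A secondary but delicate point is the bookkeeping of the scalar gauge factors (the powers of $\mathrm{i}$ and of $\mathcal{C}$, the constants $c_0$ and $\mathtt{g}_{\eps_x,\eps_y}$, and the exponentials of Theorem~\ref{Airyasymptotics}), which must be shown to cancel out of every minor so that the limiting kernel is precisely $\mathcal{A}$ and not a gauge-twisted version of it.
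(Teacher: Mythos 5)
First, a point of reference: the paper does not prove this statement at all — it is Theorem 1.1 of~\cite{BCJ:16}, imported as a black box. The only place its mechanism surfaces in the present paper is the cumulant machinery of Appendix~\ref{Appendix:proofsubtlecomps} (and Section~\ref{sec:nuconverge}), so that is what your proposal has to be measured against. Your Steps 1--2 (height differences as signed sums of edge indicators, the Laplace transform as a determinant over a finite edge set via Theorem~\ref{localstatisticsthm}, the splitting~\eqref{eq:Ksplit}) do match the actual set-up.

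The genuine gap is in Step 3. You assert that after gauge conjugation and rescaling ``the $\mathbb{K}^{-1}_{1,1}$-part of $\mathbb{L}_m$ converges pointwise to $\phi$'' and that the $\tfrac{1}{4M}\sum_{k=1}^M$ average ``collapses onto the single point $\beta_q$'' as a harmless technicality. Neither holds in the regime that actually matters. For two edges of $\mathcal{E}_m$ that are microscopically close (e.g.\ neighbouring edges inside the same $\Pi_{p,q,k}$), $\mathbb{K}^{-1}_{1,1}$ is $O(1)$, not $(2m)^{-1/3}(\phi+o(1))$; Theorem~\ref{Airyasymptotics} only governs points separated on the $m^{1/3}$ and $m^{2/3}$ scales, and these short-range smooth-phase correlations contribute at order one to every cumulant of the linear statistic. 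If your Step 3 were correct one could take $M=1$, but the single-line height difference carries an $O(1)$ smooth-phase noise on top of the Airy fluctuation and does \emph{not} converge to $\mu_{\mathrm{Ai}}$ — removing that noise is the entire reason $\mu_m$ is defined as an average over $M\to\infty$ lines. The proof in~\cite{BCJ:16} therefore does not run through pointwise kernel convergence of a Fredholm determinant: it takes logarithms, expands in traces as in~\eqref{traceexp}, splits each trace according to which kernel factors are the smooth part and which are the Airy part and according to coincidences of the auxiliary indices (the sets $D_{r,0},\dots,D_{r,3}$ of the Appendix), and shows separately that the purely smooth-phase contributions $U_0,U_1,U_2$ vanish thanks to the averaging and the near-independence of distant lines in the smooth phase (cf.\ Proposition~\ref{prop:couple2}), while only the terms containing an $\mathbb{K}_{\mathrm{A}}$ factor survive and converge to the Airy cumulants. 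Your ``main obstacle'' paragraph (uniformity of the $o(1)$ errors, trace-class domination) identifies a real but secondary issue; the primary difficulty is that the smooth part of the kernel does not disappear pointwise and must be cancelled by the averaging, which your outline treats as negligible.
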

   Roughly speaking, the above theorem says that  the horizontal averaging of the height function converges (in the above sense) to the Airy kernel point process.

Introduce the random signed measure
\begin{equation}\label{def:nu_m}
\begin{split}
\nu_m(\{ \beta_q \} \times A_p) &= \frac{1}{4 M} \sum_{k=1}^{M} h^a(J_{p,q,k,1}^r) - h^a(J_{p,q,k,1}^l).
\end{split}
\end{equation}
The measure $\nu_m$ can be thought of as a `vertical averaging' of the height function.   
\begin{prop} \label{prop:nu_mconverge}
As $m \to \infty$ and for all $a \in (0,1)$, $\nu_m$ converges to the extended Airy kernel point process, where convergence  is in the same sense  as that given in Theorem~\ref{thm:previousBCJ}.

\end{prop}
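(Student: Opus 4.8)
The plan is to re-run the argument behind Theorem~\ref{thm:previousBCJ} (Theorem~1.1 of~\cite{BCJ:16}) essentially verbatim, with the horizontal averaging in the $e_2$-direction replaced by the vertical averaging in the $e_1$-direction. The first observation is that, since $J^l_{p,q,k,1}$ and $J^r_{p,q,k,1}$ are $a$-faces and the $a$-height function agrees with the full height function on $a$-faces, we have $h^a(J^r_{p,q,k,1})-h^a(J^l_{p,q,k,1})=h(J^r_{p,q,k,1})-h(J^l_{p,q,k,1})$; moreover the two faces share the same $e_2$-coefficient $\beta_m(q,1)$ and differ only in their $e_1$-coefficient, by an amount of order $(2m)^{1/3}$. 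Choosing the lattice path that runs straight in the $e_1$-direction between them and applying the Thurston height rule, this difference equals, up to a deterministic constant, $\pm4$ times a sum of indicators $\mathbbm{I}_{\{e\text{ is a dimer}\}}$ over the $O(m^{1/3})$ edges crossed. Hence $\nu_m$ is a linear statistic of the dimer point process, exactly like $\mu_m$; the only difference is that the averaging index $k$ now sits in the third slot — a mesoscopic shift of size $\lambda_1 k(\log m)^2$ in the $e_1$-direction, with opposite signs at the two endpoints — rather than in the fourth slot.

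The next step is to convert $\mathbb{E}\big[\exp\big(\sum_{p,q}w_{p,q}\nu_m(\{\beta_q\}\times A_p)\big)\big]$ into a Fredholm determinant. By Theorem~\ref{localstatisticsthm}, for a determinantal process with correlation kernel $L$ one has $\mathbb{E}[\exp(\sum_e w_e\mathbbm{I}_{\{e\}})]=\det(\mathbbm{I}+(e^{\Psi_m}-1)L)$ over the finite edge set appearing in the expansion of the $\nu_m$'s, where $\Psi_m$ collects the coefficients $\pm w_{p,q}/(4M)$, one term per value of $k$. One then writes $K^{-1}_{a,1}=\mathbb{K}^{-1}_{1,1}-\mathbb{K}_{\mathrm A}$ and inserts Theorem~\ref{Airyasymptotics}: the scalar prefactors $\mathrm i^{\cdots}\mathcal C^{\cdots}c_0\,\mathtt g_{\eps_x,\eps_y}e^{\cdots}$ are removed by a conjugation (gauge transformation) leaving the determinant unchanged, as in~\cite{CJ:16,BCJ:16}; the factor $(2m)^{-1/3}$ against the sum over the $O(m^{1/3})$ edges in the $e_1$-window turns, after the $\lambda_1$-scaling, into a Riemann sum converging to $\int_{A_p}d\lambda$; and the shift $\alpha\mapsto\alpha+\beta^2$ matches the arguments of $\tilde{\mathcal A}$ and $\phi$. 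Since~\eqref{As1} yields $\tilde{\mathcal A}$ and~\eqref{As2} yields the Gaussian part $\phi$, the limiting kernel is $\tilde{\mathcal A}-\phi=\mathcal A$, and one obtains $\det(\mathbbm{I}+(e^{\Psi}-1)\mathcal A)=\mathbb{E}[\exp(\sum_{p,q}w_{p,q}\mu_{\mathrm{Ai}}(\{\beta_q\}\times A_p))]$ for $|w_{p,q}|<R$, with $R$ coming from the Hadamard-type bound used to pass to the limit in the Fredholm expansion. The averaging $\tfrac1M\sum_{k=1}^M$ plays exactly the same role as in~\cite{BCJ:16}: the shifts $\lambda_1 k(\log m)^2$ are $o((2m)^{1/3})$, and since Theorem~\ref{Airyasymptotics} is uniform over $|k^i|\le M$, each summand has the same limit and so does the average.

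The main obstacle is the same analytic core as in~\cite{BCJ:16}: showing that the smooth-plus-finite-rank kernel built from $K^{-1}_{a,1}$ converges with uniform tail control, so that the Fredholm determinants converge — in particular that the non-negligible full-plane piece $\mathbb{K}^{-1}_{1,1}$ contributes precisely the subtraction of $\phi$ and does not spoil the Riemann-sum convergence. These estimates are supplied by~\cite{CJ:16} and~\cite{BCJ:16}. The genuinely new points, to be verified carefully in Section~\ref{sec:nuconverge}, are that the vertical window — intervals of slightly varying width, growing by $2\lambda_1 k(\log m)^2$ — still produces the Riemann sum for $\int_{A_p}d\lambda$ (because the endpoint displacements are negligible on the $(2m)^{1/3}$ scale), and that nothing in the scheme of~\cite{BCJ:16} used that the averaging was in the $e_2$-direction rather than the $e_1$-direction.
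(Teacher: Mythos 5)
Your route is genuinely different from the paper's: you propose to re-run the Fredholm determinant/cumulant computation of~\cite{BCJ:16} directly for $\nu_m$, whereas the paper never recomputes the determinant for $\nu_m$ at all. Instead it bounds $\mathbb{E}_{\mathrm{Az}}[\exp(\sum_{p,q} w_{p,q}(\nu_m-\mu_m))]$ from below by Jensen's inequality (using flatness of the smooth phase to show $\mathbb{E}_{\mathrm{Az}}[\nu_m-\mu_m]=o(1)$) and from above by two applications of Cauchy--Schwarz, which reduces everything to exponential moments of \emph{short} height differences of length $O(M(\log m)^2)$ with disjoint, well-separated supports (Lemma~\ref{lem:subtlecomps}); only to these does it apply the machinery of~\cite{BCJ:16}, with crude bounds. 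The conclusion then follows from Theorem~\ref{thm:previousBCJ}.

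The gap in your proposal is the closing claim that ``nothing in the scheme of~\cite{BCJ:16} used that the averaging was in the $e_2$-direction.'' It did, in the following concrete sense. For $\mu_m$ the $M$ averaged windows lie on $M$ distinct lines, pairwise separated by at least order $(\log m)^2$; hence they are disjoint, the symbol multiplying the kernel is uniformly $O(1/M)$, and the full-plane part $\mathbb{K}^{-1}_{1,1}$ between distinct windows is $e^{-c(\log m)^2}$ — this is precisely what suppresses the $O(1)$ smooth contribution in the cumulant traces. For $\nu_m$ the $2M$ faces $J^{l}_{p,q,k,1}$, $J^{r}_{p,q,k,1}$ all lie on the \emph{same} line and the corresponding segments are nested: writing $\nu_m$ as a linear statistic $\sum_e\psi_m(e)\mathbbm{I}_e$, the coefficient of an edge in the common core (between $J^l_{p,q,1,1}$ and $J^r_{p,q,1,1}$) is $\pm w_{p,q}$, not $O(1/M)$. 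Consequently the cumulant terms with all arguments in the core and all kernels equal to $\mathbb{K}^{-1}_{1,1}$ carry no $M^{-s}$ suppression; they are the cumulants of the un-averaged smooth-phase height difference $h^a(J^r_{p,q,1,1})-h^a(J^l_{p,q,1,1})$, which are $O(1)$ and nonvanishing (they encode the local loop noise at the two endpoints). They only cancel against cross terms between the core and the $2M$ fringe increments — in effect a summation by parts in $k$ — and this cancellation is not delivered by the disjoint-support bookkeeping of~\cite{BCJ:16}. Your Riemann-sum discussion addresses only the $m^{-1/3}\mathbb{K}_{\mathrm{A}}$ piece, which was never the difficulty. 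To close the gap you must either exhibit and control this core--fringe cancellation in the trace expansion, or do as the paper does and compare $\nu_m$ with $\mu_m$.
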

It follows from the proposition that the random variables $\nu_m(\{ \beta_q\} \times A_p)$ converge jointly in law to the random variables $\mu_{\mathrm{Ai}} (\{\beta_q\} \times A_p)$, $1 \leq p \leq L_2$, $1 \leq q \leq L_1$ as $m\to \infty$.  The proof is given in Section~\ref{sec:nuconverge}.

\subsection{Smooth Couplings} \label{subsec:smoothcouple}

Here, we state a result for coupling the dimer configurations at the rough-smooth boundary to the restriction of the full-plane smooth phase to a single box and also show that two distant configurations in the full-plane smooth phase are almost independent. 

Let $(u,v)$ be an $a$-face and let $\Lambda_L^{(u,v)}=\Lambda ((u,v),L)$ be a box with corners $(u+L-1,v)$, $(u-L+1,v)$,  $(u,v+L-1)$ and $(u,v-L+1)$ for $L \in 2\mathbb{Z}$, chosen so that the box is inside the Aztec diamond. 
 Let $\partial \Lambda_L^{(u,v)}$ denote vertices which share edges that cross boundary of the box $\Lambda_L^{(u,v)}$. Let $\mathtt{W}^{\Lambda_L^{(u,v)}}$ denote all white vertices in $\Lambda_L^{(u,v)} \cup \partial \Lambda_L^{(u,v)}$, then we can write $\mathtt{W}^{\Lambda_L^{(u,v)}}=\{{w}_1,\dots, {w}_R\}$ where $R=L^2/2+2L$. Set $f_1=e_1, f_2=e_2, f_3=-e_1$, and $f_4=-e_2$ and $[N]=\{1,\dots, N\}$ for a positive integer $N$. A \emph{configuration} in $\Lambda_L^{(u,v)}$ is a  set of edges:
\begin{equation} \label{gascouple:edges}
	({w}_1,{w}_1+f_{s_1}),\dots, ({w}_R,{w}_R+f_{s_R})
\end{equation}
where $s_j \in[4], 1 \leq j \leq R$.  We think of~\eqref{gascouple:edges} as the event that all these edges are covered by dimers. For $\overline{s} \in [4]^R$, we let $\overline{s}$ denote the configuration~\eqref{gascouple:edges}. Note that for certain choices of $\overline{s}$, two edges will meet at a black vertex, but these configurations will have probability zero. We have two probability measures on $\Omega=[4]^R$ coming from $\mathbb{P}_{\mathrm{Az}}$ and $\mathbb{P}_{\mathrm{sm}}$. 

Write 
\begin{equation}
	A_{ij}(\overline{s})=K_{a,1}({w}_i,{w}_i)\mathbb{K}^{-1}_{1,1}({w}_i,{w}_j+f_{s_j})
\end{equation}
and 
\begin{equation}
	C_{ij}(\overline{s})=-K_{a,1}({w}_i,{w}_i)\mathbb{K}_{\mathrm{A}}({w}_i,{w}_j+f_{s_j});
\end{equation}
see Section~\ref{App:Kast}. It follows that from Theorem~\ref{Airyasymptotics} and~\cite[Theorem 2.6]{CJ:16} that there is a constant $C_0$ such that 
\begin{equation} \label{constC0}
	|C_{ij}(\overline{s})| \leq C_0 
\end{equation}
for all $i,j$ and for all $\overline{s}$ chosen so that $(u,v)=x$, where $x$ is of the form given by~\eqref{xyscaling}. 
Then, $\mathbb{P}_{\mathrm{Az}}$ induces the following measure on $\Omega$:
\begin{equation}
	p_{\mathrm{Az}}\big(\overline{s}|\Lambda_L^{(u,v)}\big)=\det\big( A_{ij}(\overline{s})+ C_{ij}(\overline{s} )\big)_{1\leq i,j\leq R} \label{gascouple:pAz} 
\end{equation}
and $\mathbb{P}_{\mathrm{sm}}$ gives
\begin{equation}	
	p_{\mathrm{sm}}\big(\overline{s}|\Lambda_L^{(u,v)}\big)=\det( A_{ij}(\overline{s}))_{1\leq i,j\leq R}.\label{gascouple:S}
\end{equation}
Note that if $w_j+f_{s_j}=w_k+f_{s_k}$ for $j\not = k$, then~\eqref{gascouple:pAz} and~\eqref{gascouple:S}  both give zero, so configurations with overlaps have probability zero.

\begin{prop}\label{prop:couple}
	Let $C_0$ be the constant in~\eqref{constC0}. Then, we have the following estimate of the total variation distance
	\begin{equation}
		d_{TV}(p_{\mathrm{Az}},p_{\mathrm{sm}}) = \frac{1}{2} \sum_{\overline{s} \in \Omega} \big|p_{\mathrm{Az}}\big(\overline{s}|\Lambda_L^{(u,v)}\big) - p_{\mathrm{sm}}\big(\overline{s}|\Lambda_L^{(u,v)}\big)\big| \leq \frac{ eC_0(2L+L^2)^2}{m^{1/3}},
	\end{equation}
	where $	p_{\mathrm{Az}}=p_{\mathrm{Az}}\big(\cdot|\Lambda_L^{(u,v)}\big)$, $p_{\mathrm{sm}}=p_{\mathrm{sm}}\big(\cdot|\Lambda_L^{(u,v)}\big)$,
	provided $L$ satisifes $ C_0(2L+L^2)^2 \leq m^{1/3}$ and  $(u,v)=x$ is of the form given in~\eqref{xyscaling}.
\end{prop}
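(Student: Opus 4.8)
The plan is to estimate the total variation distance directly from the identity $d_{TV}(p_{\mathrm{Az}},p_{\mathrm{sm}})=\tfrac12\sum_{\overline s\in\Omega}\bigl|\det\bigl(A(\overline s)+m^{-1/3}C(\overline s)\bigr)-\det A(\overline s)\bigr|$, reducing everything to a perturbation bound for a determinant, summed over configurations. Expanding by multilinearity of the determinant in its $R$ columns, $\det(A+m^{-1/3}C)=\sum_{S\subseteq[R]}m^{-|S|/3}\det A_S$, where $A_S$ is obtained from $A$ by replacing, for each $j\in S$, the $j$-th column with the $j$-th column of $C$; the term $S=\emptyset$ is precisely $p_{\mathrm{sm}}(\overline s)=\det A(\overline s)$. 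Hence the difference above equals $\sum_{\emptyset\ne S}m^{-|S|/3}\det A_S(\overline s)$, and it suffices to bound $\sum_{\overline s}\bigl|\det A_S(\overline s)\bigr|$ for each $S$ of a given cardinality $k$ and then resum over $k$.

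Fixing $S$ with $|S|=k$, I would Laplace-expand $\det A_S$ along the $k$ columns coming from $C$: this writes $\det A_S(\overline s)$ as a signed sum over the $R(R-1)\cdots(R-k+1)$ ways of choosing the rows $i_1,\dots,i_k$ used by those columns, each summand being $\pm\,C_{i_1,j_1}(\overline s)\cdots C_{i_k,j_k}(\overline s)$ times the minor of $A$ obtained by deleting the rows $i_1,\dots,i_k$ and the columns in $S$. By~\eqref{constC0} every $C$-entry has modulus $\le C_0$. The decisive structural point is that each column of $A$ and of $C$ depends on $\overline s$ only through a single coordinate, so that the sum over $\overline s\in\Omega=[4]^R$ factorizes across columns: summing each deleted $C$-column over its own coordinate gives a factor $\le 4C_0$, and what remains is the sum, over the remaining $R-k$ coordinates, of the modulus of a minor of $A$.

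The main obstacle is to control these minor sums. For a \emph{principal} minor $A[T,T]$ this is clean from the probabilistic meaning of $p_{\mathrm{sm}}$: $\det A[T,T](\overline s)\ge0$ is the probability, under the full-plane gas measure, that the white vertices indexed by $T$ carry the prescribed edges, and since each white vertex is covered by exactly one edge, summing over the coordinates in $T$ gives exactly $1$. The minors that actually arise from the Laplace expansion are, however, \emph{non-principal}, $A[T',T]$ with $T'\ne T$; here one must use the defining feature of the smooth (gas) phase, namely that $\mathbb{K}^{-1}_{1,1}$ decays exponentially, together with the idempotency of the dimer kernel $\mathtt e\mapsto K(\mathtt b,\mathtt w)K^{-1}(\mathtt w',\mathtt b)$: writing $\mathbb{K}^{-1}_{1,1}=\mathbb{K}^{-1}_{1,1}\mathbb{K}_{1,1}\mathbb{K}^{-1}_{1,1}$ and applying Cauchy--Binet to split an off-diagonal minor as a sum of products of minors, the exponential decay lets one reduce the non-principal sums to the principal case at the cost of an absolute constant independent of $R$. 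This is the delicate step, and it is where one needs the smooth-phase decay, i.e.\ the good behaviour of the comparison measure.

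It then remains to reassemble the estimate. There are $\binom{R}{k}$ choices of $S$ and at most $R^k$ choices of the rows in the Laplace expansion, each contributing at most $(4C_0)^k$ from the $C$-columns and at most an absolute constant from the $A$-minor sum, so that $\sum_{\overline s}\bigl|\det(A+m^{-1/3}C)-\det A\bigr|\le\sum_{k=1}^{R}\tfrac1{k!}\bigl(c\,C_0R^2/m^{1/3}\bigr)^k$ for an absolute constant $c$. Recalling $R=\tfrac12L^2+2L$, the hypothesis $C_0(2L+L^2)^2\le m^{1/3}$ keeps the argument of this exponential-type series bounded by a constant, so the series is at most $e$ times its first term, which is $O(C_0R^2/m^{1/3})$; tracking the combinatorial constants and the factor $\tfrac12$ in $d_{TV}$ converts this into the stated bound $eC_0(2L+L^2)^2/m^{1/3}$. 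Finally, configurations $\overline s$ containing an overlap $w_j+f_{s_j}=w_k+f_{s_k}$ need no separate treatment, since both $p_{\mathrm{Az}}(\overline s)$ and $p_{\mathrm{sm}}(\overline s)$ vanish there.
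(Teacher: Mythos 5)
Your skeleton coincides with the paper's proof: multilinear expansion of $\det(A+m^{-1/3}C)$ over column subsets, Laplace expansion of the $C$-columns into standard basis vectors, the entrywise bound $|C_{ij}|\le C_0$ from~\eqref{constC0}, factorization of the sum over $\overline s$ across coordinates, and the final resummation into $(1+O(C_0R\,m^{-1/3}))^R-1$ under the hypothesis $C_0(2L+L^2)^2\le m^{1/3}$. The gap is exactly at the step you yourself flag as ``delicate'': bounding $\sum_{\overline s}\bigl|\det A[T',T](\overline s)\bigr|$ for the \emph{non-principal} minors that the Laplace expansion produces. You assert this can be reduced to the principal case ``at the cost of an absolute constant independent of $R$'' by writing $\mathbb{K}^{-1}_{1,1}$ as a triple product, applying Cauchy--Binet, and invoking exponential decay of the smooth-phase kernel — but you do not carry this out, and it is far from routine: Cauchy--Binet here generates sums over intermediate index sets of size $R-k$ drawn from the infinite lattice, and controlling those uniformly in $R$ (which grows with $m$) is itself an estimate of the same order of difficulty as the proposition. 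As written, the proposal does not close.

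The paper needs no decay estimate at all: the disjointness argument you apply to principal minors works verbatim for the non-principal ones. By the local statistics theorem (Theorem~\ref{localstatisticsthm}), $\bigl|\det(A_{i_p'j_q'}(\overline s))_{1\le p,q\le R-r}\bigr|$ equals the $\mathbb{P}_{\mathrm{sm}}$-probability that each white vertex $w_{i_p'}$ is covered by the dimer joining it to the black vertex $w_{j_p'}+f_{s_{j_p'}}$; the row set and column set need not coincide for this interpretation. Since every white vertex carries exactly one dimer, these events are pairwise disjoint as the coordinates $(s_{j_1'},\dots,s_{j_{R-r}'})$ vary, so the probabilities sum to at most $1$, and the $r$ coordinates on which the minor does not depend contribute the free factor $4^r$ — giving $\sum_{\overline s}|\det A[T',T]|\le 4^r$ outright, for all $a\in(0,1)$, by positivity and disjointness alone. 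Replacing your Cauchy--Binet step with this observation repairs the argument; the remainder of your write-up then matches the paper's proof.
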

The proof is given in Section~\ref{section:couple}.

In order to show that two distant configurations in the smooth phase are almost independent, we need to augment the notation given above.  Consider $\Lambda^1 = \Lambda(J^r_{p,q,k_1,1},L)$ and $\Lambda^2 = \Lambda(J^r_{p,q,k_2,1},L)$ for $k_1 \not = k_2$ with $1 \leq k_1 ,k_2 \leq M$, $1 \leq p \leq L_2$  and $1 \leq q \leq L_1$, {where $L<\lambda_1 (\log m)^2/\sqrt{2}$. The condition on $L$ ensures that $\Lambda^1$ and $\Lambda^2$ are disjoint.} We extend the above conventions for $\Lambda^1 \cup \Lambda^2$. That is, for $1 \leq i \leq 2$,
\begin{itemize}
	\item $\partial \Lambda^i$ denotes vertices which share edges that cross boundary of the box $\Lambda^i$ 
	\item $\mathtt{W}^{\Lambda^i}=\{{w}_{1+(i-1)R},\dots, {w}_{R+(i-1)R}\} $ denotes all white vertices in $\Lambda^i \cup \partial \Lambda^i$.
\end{itemize}
 A \emph{configuration} in $\Lambda^1 \cup \Lambda^2$ is a  set of edges:
\begin{equation} \label{gascouple:edgesgas}
	({w}_1,{w}_1+f_{s_1}),\dots, ({w}_{2R},{w}_{2R}+f_{s_{2R}})
\end{equation}
where $s_j \in[4], 1 \leq j \leq 2R$ and $R=\frac{L^2}{2}+2L$ as before.   We consider the smooth phase on the set of configurations $\Omega^1\cup\Omega^2=[4]^{2R}$, where each $\Omega^i$ is responsible for the configuration in $\Lambda^i$ for $1 \leq i \leq 2$.  
Write for $1 \leq i , j \leq 2R$
\begin{equation}
	D_{ij}(\overline{s})=K_{a,1}({w}_i,{w}_i)\mathbb{K}^{-1}_{1,1}({w}_i,{w}_j+f_{s_j}),
\end{equation}
\begin{equation}
	E_{ij}(\overline{s})=\left\{ \begin{array}{ll} 
		K_{a,1}({w}_i,{w}_i)\mathbb{K}^{-1}_{1,1}({w}_i,{w}_j+f_{s_j}) & \mbox{if }1 \leq i,j \leq R \\
		K_{a,1}({w}_i,{w}_i)\mathbb{K}^{-1}_{1,1}({w}_i,{w}_j+f_{s_j}) &\mbox{if }R+1 \leq i,j \leq 2R \\ 
		0 & \mbox{otherwise}
	\end{array} \right.
\end{equation}
and $F_{ij}(\overline{s})=D_{ij}(\overline{s})-E_{ij}(\overline{s}).$ As above, $\mathbb{P}_{\mathrm{sm}}$ induces the probability measure on $\Omega^1 \cup \Omega^2$
\begin{equation}
	p_{\mathrm{sm}}\big(\overline{s}|\Lambda^1 \cup \Lambda^2\big) =\det(D_{ij}(\overline{s}))_{1 \leq i,j \leq 2R}
\end{equation}
and the marginals on each $\Lambda^i$ 
$$
p_{\mathrm{sm}}\big(\overline{s}|\Lambda^k \big)=\det(E_{ij}(\overline{s}))_{1+(k-1)R \leq i,j \leq Rk}\hspace{5mm}  \mbox{for }1 \leq k \leq 2.$$

\begin{prop}\label{prop:couple2}
	There exists constants $C,c_0>0$ such that 
	\begin{equation}
		  \sum_{\overline{s} \in \Omega} \big|p_{\mathrm{sm}}\big(\overline{s}|\Lambda^1 \cup \Lambda^2\big) - p_{\mathrm{sm}}\big(\overline{s}|\Lambda^1 \big)p_{\mathrm{sm}}\big(\overline{s}|\Lambda^2 \big)\big| \leq CR^2 e^{-c_0(\log m)^2},
	\end{equation}
provided that $$R<\frac{\lambda_1^2(\log m)^4}{4}+\lambda_1(\log m)^2.$$  \end{prop}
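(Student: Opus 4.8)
The plan is to bound the total variation distance between the joint law $p_{\mathrm{sm}}(\cdot|\Lambda^1\cup\Lambda^2)$ and the product of its marginals by expanding the determinant $\det(D_{ij})_{1\le i,j\le 2R}$ as a perturbation of the block-diagonal determinant. Writing $D = E + F$, where $E$ is block-diagonal (it only contains the within-box entries $\mathbb{K}^{-1}_{1,1}(w_i,w_j+f_{s_j})$ with $w_i,w_j$ in the same box) and $F$ collects exactly the cross-box entries, we have $\det(E_{ij}) = p_{\mathrm{sm}}(\overline s|\Lambda^1)\,p_{\mathrm{sm}}(\overline s|\Lambda^2)$ by multiplicativity of the determinant over blocks. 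So the task reduces to estimating $\sum_{\overline s}|\det(E+F) - \det(E)|$. First I would use the multilinearity of the determinant to write $\det(E+F)-\det(E)$ as a sum over nonempty subsets $S\subseteq[2R]$ of rows that are replaced by the corresponding rows of $F$; each such term is a determinant of an $2R\times 2R$ matrix whose rows indexed by $S$ come from $F$ (hence are supported on cross-box columns) and whose other rows come from $E$.

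Next, to make each of these terms small I need decay of the cross-box entries $\mathbb{K}^{-1}_{1,1}(w_i,w_j+f_{s_j})$ when $w_i$ and $w_j+f_{s_j}$ lie in different boxes. The two boxes $\Lambda^1,\Lambda^2$ are centered at $J^r_{p,q,k_1,1}$ and $J^r_{p,q,k_2,1}$, which by the definitions \eqref{eq:facesJr} and $\beta_m(q,k)$ differ in the $e_2$-coordinate by $2(\lfloor\beta_q\lambda_2(2m)^{2/3}+k_1\lambda_2(\log m)^2\rfloor - \lfloor\beta_q\lambda_2(2m)^{2/3}+k_2\lambda_2(\log m)^2\rfloor)$, i.e.\ by an amount of order $|k_1-k_2|\lambda_2(\log m)^2 \ge \lambda_2(\log m)^2$. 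In the smooth (gas) phase, the inverse Kasteleyn matrix $\mathbb{K}^{-1}_{1,1}(x,y)$ decays exponentially in the distance $|x-y|$ with some rate $c_1 = c_1(a) > 0$; this is standard for a gaseous dimer measure and follows from a contour-deformation / saddle-point analysis of \eqref{smoothphaseeqn} (the zero set of $\tilde c(u_1,u_2)$ is disjoint from $\Gamma_1\times\Gamma_1$ since we are in the gas phase, so the contours can be pushed off the unit torus). Given the box radius constraint $R < \tfrac12\lambda_1^2(\log m)^4 + 2\lambda_1(\log m)^2$ — equivalently $L$ (with $R = L^2/2 + 2L$) of order at most $\lambda_1(\log m)^2$ — any point in $\Lambda^1$ is at distance at least of order $(\log m)^2$ from any point in $\Lambda^2$ (here I use $\lambda_2/\lambda_1 = \sqrt{1-2c}$, a fixed positive constant, so the separation genuinely dominates the box size for $m$ large). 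Hence every cross-box entry is bounded by $C e^{-c_0(\log m)^2}$ for a suitable $c_0>0$; the diagonal prefactor $K_{a,1}(w_i,w_i)$ is a fixed constant, so the same bound holds for $|F_{ij}(\overline s)|$, and also $|D_{ij}(\overline s)|, |E_{ij}(\overline s)|$ are uniformly bounded by some constant $C'$.

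Then I would estimate the subset-sum. For the term indexed by $S$ with $|S| = \ell \ge 1$, I apply Hadamard's inequality row-wise: the $\ell$ rows from $F$ each have Euclidean norm at most $\sqrt{2R}\cdot Ce^{-c_0(\log m)^2}$ and the remaining $2R-\ell$ rows from $E$ each have norm at most $\sqrt{2R}\,C'$, so the determinant is at most $(2R)^{R}(C')^{2R-\ell}(Ce^{-c_0(\log m)^2})^{\ell}$. Summing over the $\binom{2R}{\ell}$ choices of $S$ and over $\ell\ge 1$ gives a geometric series dominated by its $\ell=1$ term, yielding a bound of the form $2R\cdot(2R)^{R}(C')^{2R-1}Ce^{-c_0(\log m)^2}$ per configuration $\overline s$; then multiplying by $|\Omega^1\cup\Omega^2| = 4^{2R}$ (the number of configurations) and absorbing all the $R$-dependent and $4^{2R}$ factors — which are at most $e^{O(R\log R)} = e^{o((\log m)^2)}$ since $R = O((\log m)^4)$ — into a slightly smaller constant $c_0$, I obtain the claimed bound $CR^2 e^{-c_0(\log m)^2}$. (The polynomial factor $R^2$ in the statement is generous; one could in fact absorb it too, but keeping it makes the bookkeeping painless.) I expect the main obstacle to be the exponential-decay estimate for $\mathbb{K}^{-1}_{1,1}$ with an explicit, $(u,v)$-uniform rate valid over the whole box, since one must track the contour deformation in \eqref{smoothphaseeqn} carefully and verify that the decay rate $c_0$ can be taken independent of which vertices in $\Lambda^1$ and $\Lambda^2$ are being compared and of the parities $\eps_x,\eps_y$; the combinatorial determinant expansion and Hadamard bound are routine by comparison.
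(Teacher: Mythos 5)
Your decomposition is the same as the paper's: write $D=E+F$ with $E$ block-diagonal (so $\det E$ is the product of marginals), note that the cross-box entries $F_{ij}$ are bounded by $De^{-c_0(\log m)^2}$ because the two boxes are separated by a distance of order $(\log m)^2$ and the smooth-phase kernel decays exponentially (the paper gets this from the estimate on $E_{k,\ell}$ in \cite[Lemma 4.7]{CJ:16} rather than by re-deforming contours, but that is the same content), and then expand $\det(E+F)-\det E$ multilinearly over the set of columns/rows taken from $F$. Up to that point you are on track.

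The gap is in the final summation. You bound each term of the expansion by Hadamard's inequality and then multiply by $|\Omega|=4^{2R}$, claiming the resulting prefactor $(2R)^{R}(C')^{2R}4^{2R}=e^{O(R\log R)}$ is $e^{o((\log m)^2)}$. It is not: with $R=O((\log m)^4)$ you have $R\log R=\Theta((\log m)^4\log\log m)$, which \emph{dominates} $(\log m)^2$. Since the only smallness available is $e^{-c_0(\log m)^2}$ per factor of $F$ (the boxes are genuinely only $\Theta((\log m)^2)$ apart, e.g.\ when $|k_1-k_2|=1$), a prefactor of size $e^{\Theta((\log m)^4\log\log m)}$ cannot be absorbed "into a slightly smaller $c_0$", and your final bound diverges. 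The paper avoids this by never invoking Hadamard and never paying the crude $4^{2R}$ count: after pulling out the $r$ small factors $F_{i_1j_1}\cdots F_{i_rj_r}$, the remaining object is a minor of $E$, i.e.\ (a product of) smooth-phase probabilities of specified dimer configurations, and summing $|\det(E_{i_p'j_q'})|$ over $\overline{s}\in\Omega$ gives at most $4^{r}$ because the events indexed by the constrained coordinates are disjoint (this is exactly the mechanism of \eqref{gascouple:six} in the proof of Proposition~\ref{prop:couple}). With that substitution the sum over $r$ is controlled by its $r=1$ term and yields $CR^2e^{-c_0(\log m)^2}$. You need this probabilistic interpretation of the minors — or some other bound that is summable over $\Omega$ without an $e^{\Theta(R\log R)}$ loss — to close the argument.
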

The proof is given in Section~\ref{section:couple}.

\subsection{Biinfinite paths in the full-plane smooth phase}

In Section~\ref{subsec:squish}, we defined corridors, paths and loops for a dimer covering on ${D}_m$. 
Analogous to the Aztec diamond, there is a height function for the dimer model on the plane defined through height differences between faces, with the same convention given for the Aztec diamond. This height function on the plane  is unique up to height level.  There is also a corresponding notion of $a$-height function, $h^a$. Both the prescribed orientation and the squishing procedure generalize to the full plane by assigning an arrow to each edge of the plane from its white vertex to its black vertex, and by contracting the size of each $b$-face while simultaneously increasing the size of each $a$-face.  A \emph{biinfinite path} is a biinfinite sequence of distinct edges $\{e_{2k+1}, k\in \mathbb{Z}\}$ such that 
\begin{enumerate}
\item the sequence of $a$-edges $\{e_{2k+1}\}_{k \in\mathbb{Z}}$ are all by  covered by $a$-dimers and none of these $a$-edges are part of a double edge after the squishing procedure, and  
\item there exists a sequence $\{e_{2k}\}_{k \in \mathbb{Z}}$ of distinct $b$-edges not covered by dimers such that the edge $e_{2k}$ shares an endpoint with the edges $e_{2k-1}$ and $e_{2k}$ for all $k \in \mathbb{Z}$.  
\end{enumerate}
Similar to the construction for  $\tilde{D}_m$, we introduce mirrors to vertices of the plane (after squishing) where the dimer covering has four incident $a$-dimers, that is, a mirror is a line between the centers of $a$-faces of lowest $a$-height value around each vertex which has four incident $a$-dimers after the squishing procedure.  Analogous to the case of $\tilde{D}_m$, equipped with mirrors,  biinfinite paths are  well-defined. Following the arguments in Section~\ref{subsec:squish}, each $a$-dimer in the full-plane smooth phase is either part of a double edge, an oriented loop or a biinfinite path.

\begin{thma}\label{thm:onecorridor}
For $a \in [0,1)$, in the full-plane smooth phase there are no biinfinite paths in the full-plane smooth phase almost surely.
\end{thma}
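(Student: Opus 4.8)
\emph{Proof strategy.} The aim is to show that $\mathbb{P}_{\mathrm{sm}}$-almost surely there is no biinfinite path. There are countably many $b$-edges, and (after squishing) a biinfinite path traverses infinitely many of them; hence it suffices to fix one $b$-edge $e^*$ and prove that $\mathbb{P}_{\mathrm{sm}}(e^*\text{ lies on a biinfinite path})=0$. If $e^*$ lies on a biinfinite path, then the path extends at least $n$ steps from $e^*$ in one of the two directions for every $n$; so it is enough to show that the probability $p_n$ that there exists a connected sequence of $n$ consecutive $a$-dimers (none of them double edges) starting from $e^*$, joined by empty $b$-edges, tends to $0$ as $n\to\infty$. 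I would first reduce to $p_n\to 0$ along these lines, using that the smooth phase is ergodic — the exponential decay of the entries of $\mathbb{K}^{-1}_{1,1}$ in \eqref{smoothphaseeqn}, equivalently the spatial mixing quantified in Proposition~\ref{prop:couple2}, makes $\mathbb{P}_{\mathrm{sm}}$ mixing and hence ergodic under the sublattice of translations preserving the two-periodic structure, so that quantities such as "the number of biinfinite paths'' are a.s.\ constant.

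The bound on $p_n$ is then a Peierls-type estimate. Writing $p_n\le\sum_\sigma\mathbb{P}(\text{the length-}n\text{ path-shape }\sigma\text{ is occupied})$, I would drop the constraints that the intervening $b$-edges be empty and that the $a$-dimers not form double edges (these only lower the probability), and estimate $\mathbb{P}(\text{the }n\text{ prescribed }a\text{-dimers are all present})$ using that the dimers form a determinantal, and in particular negatively associated, process (Theorem~\ref{localstatisticsthm}): this probability is at most $(p^*)^n$, where $p^*<1$ is the maximal occupation probability of an $a$-edge in the smooth phase, a quantity that one computes from \eqref{smoothphaseeqn} (or bounds using the exponential decay of $\mathbb{K}^{-1}_{1,1}$, which holds for every $a\in[0,1)$ because $\tilde c$ in \eqref{ctilde} has no zeros on the unit torus). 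Since at each step a self-avoiding continuation of the path has only a bounded number $\mu$ of choices, $p_n\le(\mu\,p^*)^n$, and if $\mu\,p^*<1$ this gives $p_n\to0$ and hence the theorem. This is the same circle of ideas as in Section~\ref{sec:Peierls}, but the estimate needed here is the crude one (mere summability of the number of infinite extensions), not the refined size control, which is why no smallness hypothesis on $a$ should be required.

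I expect the main obstacle to be exactly the verification that the geometric ratio beats $1$ \emph{uniformly for all} $a\in[0,1)$: one must bound the branching factor $\mu$ for the allowed path-continuations against the maximal $a$-edge occupation probability $p^*=p^*(a)$, and show $\mu\,p^*(a)<1$ throughout, rather than only for $a$ small. If this fails for some $a$, the fallback — which I would carry out in parallel and which is robust for all $a\in[0,1)$ — is to exploit that a biinfinite path is a macroscopic \emph{interface}: it separates the plane into two unbounded regions across which the corridor height $h^a_{\mathtt c}$ jumps by exactly $\pm4$. Combined with the uniformly bounded height fluctuations of a gaseous dimer phase, $\sup_{f,g}\mathrm{Var}_{\mathbb{P}_{\mathrm{sm}}}(h^a(f)-h^a(g))<\infty$ (a standard consequence of the exponential decay of correlations, cf.\ \cite{KOS:06} and \cite{CJ:16}), one argues that if biinfinite paths existed a.s.\ then, using ergodicity and the available lattice symmetries (the $180^\circ$ rotation and the diagonal reflection, which reverse path orientations), translates of the interface would accumulate along a transversal with a net preferred orientation, forcing $h^a_{\mathtt c}(f)-h^a_{\mathtt c}(g)$, and hence $h^a(f)-h^a(g)$, to be arbitrarily large with positive probability — a contradiction. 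The delicate point in this fallback is ruling out "phase-coexistence'' configurations in which biinfinite paths occur with alternating orientations and keep $h^a_{\mathtt c}$ bounded; this is handled by noting that such a configuration would itself encode a non-trivial translation-invariant structure on the corridors, contradicting ergodicity, and by verifying that the set of biinfinite paths meeting a bounded region is a.s.\ locally finite.
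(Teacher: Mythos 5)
There is a genuine gap in both branches of your proposal. For the primary (Peierls) branch, the inequality $\mathbb{P}_{\mathrm{sm}}(\text{all $n$ prescribed $a$-dimers present})\le (p^*)^n$ does not follow from the determinantal structure: negative association (equivalently, the Fischer/Hadamard bound $\det(L_{ij})\le\prod_i L_{ii}$) requires the kernel to be a Hermitian positive contraction, and the dimer kernel $L(\mathtt{e}_i,\mathtt{e}_j)=K(\mathtt{b}_i,\mathtt{w}_i)K^{-1}(\mathtt{w}_j,\mathtt{b}_i)$ of Theorem~\ref{localstatisticsthm} is not Hermitian. Indeed the edge process of a dimer model is not negatively associated in general --- two parallel edges of the same face are typically \emph{positively} correlated (flippable faces), so the per-edge factor you can extract for consecutive $a$-dimers along a path is not $p^*\approx 1/4$ but $O(1)$. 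The Peierls bound that is actually available is the partition-function swap of Section~\ref{sec:Peierls}, which gives $a^{\ell}$ per configuration against a branching factor $3$, i.e.\ exactly the restriction $a<1/3$; this is why the paper proves Lemma~\ref{lem:loopsalmostsure} only for $a<1/3$ while Theorem~\ref{thm:onecorridor} is claimed for all $a\in[0,1)$ and must be proved by other means. Your fallback branch founders on precisely the point you flag as delicate: ruling out alternating orientations. Ergodicity does not forbid ``a non-trivial translation-invariant structure on the corridors'' --- the a.s.\ infinite family of nested loops is already such a structure --- so a bi-infinite collection of interfaces with alternating signs $\pm4$, keeping $h^a_{\mathtt c}$ bounded along any transversal, is perfectly consistent with ergodicity and with bounded height fluctuations. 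No contradiction is actually derived.

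The paper's proof supplies the ingredient your proposal lacks. Via the Temperley/KPW correspondence (Section~\ref{subsec:spanning}) and the gauge transformations of Proposition~\ref{prop:gaugeequivalent}, the full-plane smooth phase is identified with a \emph{drifted} directed spanning forest on each of $\mathbb{T}^{\mathtt{w},p}$ and $\mathbb{T}^{\mathtt{f},p}$; Wilson's algorithm rooted at infinity together with the a.s.\ intersection of two independent drifted walks shows each forest is a single tree a.s.\ (Proposition~\ref{prop:tree}, Lemma~\ref{lem:oneended}). A bi-infinite path separates two half-planes of different height, and the KPW height/winding constraint says two branches of the tree can merge only if they separate the same height; the single-tree property then forces a contradiction. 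This works for every $a\in[0,1)$ precisely because it replaces the energy--entropy comparison (which fails for $a\ge 1/3$) by a topological/combinatorial rigidity statement about the spanning forest. If you want to salvage a direct argument, you would need either a proof of an exponential upper bound for $n$ consecutive $a$-dimers with rate beating $1/3$ uniformly in $a$ (not available from negative association), or a quantitative substitute for the one-tree property.
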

The proof of this theorem is given in Section~\ref{sec:geometry}.

\subsection{Control of loops}\label{sec:Auxiliaryingred1} 
For a dimer covering on $D_m$ or the full-plane, let $\mathcal{D}_l$ to be the set of loops in the covering.  Let $S$ be a set of $a$-edges.  We say that a loop $\gamma$ in $\mathcal{D}_l$ intersects $S$ if $\gamma$ has an $a$-dimer that \emph{covers} an edge in $S$.   Recall that $\ell(\gamma)$ denotes the number of $a$-dimers in a loop, that is, the length of a loop. 
\begin{lemma} \label{lem:loopsalmostsure}
	Let $S$ be a set of $a$-edges in $D_m$ or the full-plane, and assume that $a \in (0,1/3)$. Then,
\begin{equation}
	\mathbb{P}[ \exists \gamma \in \mathcal{D}_l\mbox{  that intersects $S$ and has length $\ell(\gamma)$ at least  $d$}	] \leq \frac{|S|}{1-3a}(3a)^d
	\end{equation}
	where $|S|$ is the size of $S$, and $\mathbb{P}$ is either $\mathbb{P}_{\mathrm{Az}}$ or $\mathbb{P}_{\mathrm{sm}}$.
\end{lemma}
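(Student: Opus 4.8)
\textbf{Strategy: a Peierls-type argument via the Kasteleyn matrix.} The plan is to bound the probability that a \emph{fixed} potential loop of length $d$ is present, and then sum over all possible loops through $S$. The key is that a loop $\gamma$ of length $\ell(\gamma)=k$ consists of $k$ $a$-edges that are \emph{covered} by dimers together with $k$ $b$-edges that are \emph{not} covered; by Theorem~\ref{localstatisticsthm}, the probability that a prescribed set of $k$ edges is covered is a $k\times k$ determinant of inverse Kasteleyn entries. Since the events ``$b$-edge not covered'' are complements, one should first re-express the probability of the loop configuration using inclusion–exclusion over the $b$-edges, or — more cleanly — observe that since exactly one of the four edges at each $b$-face vertex is used, prescribing the $k$ covered $a$-edges of the loop already forces enough structure that one can estimate $\mathbb{P}[\gamma \text{ present}]$ directly by a determinant of size $O(k)$. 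The essential input is a uniform bound $|K_{a,1}(\mathtt b,\mathtt w)\,K^{-1}_{a,1}(\mathtt w,\mathtt b')|\le c_1 a^{\,\mathrm{dist}}$ or simply $\le c_1$ along the relevant edges, together with Hadamard's inequality to get $\mathbb{P}[\gamma \text{ present}] \le (c_2)^{k}$ for a constant $c_2$ that can be taken proportional to $a$ when $a$ is small — this is where the $3a$ must come from.

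\textbf{Key steps, in order.} (1) Fix an $a$-edge $e\in S$ and a length $k\ge d$; I would enumerate loops $\gamma$ of length $k$ that cover $e$. Because the loop is built by, at each step, moving from an $a$-dimer to an incident uncovered $b$-edge and then to the next $a$-dimer, and because the mirror convention makes the successor deterministic \emph{given the configuration}, the combinatorial count of \emph{potential} loops of length $k$ through a fixed edge is at most $C^{k}$ for some absolute constant $C$ (at each $b$-face one has a bounded number of ways to continue). (2) For each such potential loop $\gamma$, bound $\mathbb{P}[\gamma \text{ present in the configuration}]$. Using that the $k$ $a$-edges of $\gamma$ are all covered, Theorem~\ref{localstatisticsthm} gives this probability as $|\det (K^{-1}_{a,1} \text{-block})|$ of size $k$; Hadamard plus the uniform entrywise bound on $|K_{a,1}(w,w)K^{-1}_{a,1}(w,b)|$ (which from~\cite{CJ:16} and Theorem~\ref{Airyasymptotics} is $O(1)$, and in fact has the geometric decay of the smooth phase) yields $\mathbb{P}[\gamma \text{ present}] \le (\tilde c a)^{k}$ for a constant $\tilde c$. (3) Combine:
\begin{equation}
\mathbb{P}[\exists \gamma\in\mathcal D_l \text{ intersecting } S,\ \ell(\gamma)\ge d] \le \sum_{e\in S}\sum_{k\ge d} C^{k} (\tilde c a)^{k} = |S|\sum_{k\ge d}(3a)^{k} = \frac{|S|}{1-3a}(3a)^{d},
\end{equation}
where the constants $C,\tilde c$ are tracked so that $C\tilde c a = 3a$, i.e. $C\tilde c = 3$; I would verify in the smooth-phase computation that this numerical value is exactly what the entrywise bounds deliver (this is presumably the content of the ``$a<1/3$'' threshold). (4) The argument is uniform in $m$ and applies verbatim to $\mathbb{P}_{\mathrm{sm}}$ by replacing $K^{-1}_{a,1}$ with $\mathbb{K}^{-1}_{1,1}$, whose entries obey the same (indeed cleaner, exponentially decaying) bounds.

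\textbf{Main obstacle.} The delicate point is \emph{not} the Peierls sum itself but getting the constant sharp enough that $C\tilde c a$ is genuinely $\le 3a$ rather than, say, $100a$. This requires (a) a tight combinatorial bound on the number of loops of length $k$ through a fixed edge — one must use the local structure around each $b$-face (Fig.~\ref{fig:localconfigs}) and the mirror convention to argue the branching factor is small, ideally close to $1$ per $a$-dimer since much of the loop's path is forced — and (b) exploiting the geometric decay $|\mathbb{K}^{-1}_{1,1}| \lesssim \mathcal C^{\mathrm{dist}}$ of the smooth-phase inverse Kasteleyn matrix (with $\mathcal C$ from~\eqref{G}) rather than just the crude $O(1)$ bound, so that the product over the $k$ edges of the loop contributes a genuine $a^{k}$-type smallness. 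Balancing these two competing exponential factors to land below $(3a)^{d}$ is the crux; handling the coupling between covered $a$-edges and uncovered $b$-edges correctly in the determinant (so that one is not over- or under-counting) is the technical bookkeeping that must be done carefully. A secondary point is boundary effects near $\partial D_m$, but since the bound is claimed for an arbitrary edge set $S$ and the inverse Kasteleyn estimates hold up to the boundary, this causes no real difficulty.
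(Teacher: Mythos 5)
Your overall architecture (bound the probability of a fixed loop, multiply by an entropy factor $C^k$ for the number of candidate loops, sum the geometric series) matches the paper, but the way you propose to bound $\mathbb{P}[\gamma\ \text{present}]$ is where the argument breaks down. You want to extract $(\tilde c\,a)^{k}$ from a $k\times k$ determinant of inverse Kasteleyn entries via Hadamard's inequality. Hadamard applied to a matrix with entries of size $O(1)$ gives $|\det|\le k^{k/2}O(1)^k$, a superexponential factor that kills the Peierls sum; and even if you replace the crude entrywise bound by the geometric decay $|\mathbb{K}^{-1}_{1,1}|\lesssim \mathcal{C}^{\mathrm{dist}}$, nothing in that decay produces a factor of exactly $a$ per $a$-edge of the loop. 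You flag this yourself as the crux ("getting the constant sharp enough that $C\tilde c\,a$ is genuinely $\le 3a$") but offer no mechanism to resolve it — and there is no reason the determinantal route lands on these constants. A secondary issue: your step (2) quietly drops the requirement that the $b$-edges interleaving the loop be \emph{uncovered}; prescribing only the covered $a$-edges does not by itself identify the event that $\gamma\in\mathcal{D}_l$.

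The paper's proof avoids all of this with an elementary weight-flipping (energy) estimate that needs no Kasteleyn asymptotics at all. For a fixed loop $\gamma$, the configurations in which all $a$-edges of $\gamma$ are covered have total weight $a^{\ell(\gamma)}Z_{D_m\setminus\gamma}$, while rotating those dimers onto the interleaving $b$-edges (which have weight $b=1$) produces a disjoint family of configurations of total weight $Z_{D_m\setminus\gamma}$; hence $Z_{D_m}\ge(1+a^{\ell(\gamma)})Z_{D_m\setminus\gamma}$ and $\mathbb{P}_{\mathrm{Az}}[\gamma]\le a^{\ell(\gamma)}$ exactly. The entropy constant $3$ is then pinned down combinatorially: tracing the loop, at each $b$-face one of the two uncovered $b$-edge choices forces the next $a$-edge and the other leaves two options, i.e.\ at most three continuations per step. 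Finally, the same per-loop bound holds under $\mathbb{P}_{\mathrm{sm}}$ because the full-plane smooth phase is a Gibbs measure, so the finite-volume flipping argument applies conditionally. If you replace your determinantal step by this flipping argument, the rest of your outline goes through and yields the stated constants.
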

We also prove a similar result for double edges, which holds for all $a \in[0,1)$, but this is not needed for the proof of our main result.  The proof is given in Section~\ref{sec:Peierls}.

\section{Proof of Theorem~\ref{thm:main}} \label{sec:mainproof}

Before giving the proof of Theorem~\ref{thm:main}, we first state and prove two lemmas.  We recall the notation from Section~\ref{subsec:smoothcouple} that for an $a$-face $(u,v)$, we use $\Lambda ((u,v),L)$ to denote a box with corners $(u+L-1,v)$, $(u-L+1,v)$,  $(u,v+L-1)$ and $(u,v-L+1)$ for $L \in 2\mathbb{Z}$. Throughout this section, we fix
\begin{equation}\label{eq:L}
L=2\lfloor  \lambda_1M (\log m)^2 \rfloor
\end{equation}
and for $1 \leq p \leq L_2, 1 \leq q \leq L_1$ we let $\Lambda_L^{p,q,r}=\Lambda(J_{p,q,\lfloor \frac{M}{2}\rfloor,1}^r, L  )$ so that $J_{p,q,k,1}^r \in \Lambda_L^{p,q,r}$ for all $1 \leq k \leq M$; similarly for $\Lambda_L^{p,q,l}$.  Note that the choice of $L$ above satisfies the condition in Proposition~\ref{prop:couple}.

\begin{lemma}\label{lem:loopssmall}
	Under $\mathbb{P}_{\mathrm{Az}}$ and for $a \in (0,1/3)$,  $$\frac{1}{4 M} \sum_{k=1}^{M}h^a_l(J_{p,q,k,1}^r) - h^a_l(J_{p,q,k,1}^l) \to 0$$ in probability as $m \to \infty$. 
\end{lemma}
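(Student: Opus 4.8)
The plan is to bound the loop contribution $h^a_l(J^r_{p,q,k,1}) - h^a_l(J^r_{p,q,l,1})$ by the total number (counted with multiplicity, via winding) of loops that separate the two relevant $a$-faces, and then to use Lemma~\ref{lem:loopsalmostsure} to show that, with probability tending to one, no such loop exists with length large enough to stretch between the point $J^l$ and the point $J^r$, which are at distance of order $m^{1/3}$ along the rough-smooth boundary. Concretely, recall from Definition following \eqref{loopcorridor} that $h^a_l(f)/4$ is the signed number of loops encircling $f$. Hence $|h^a_l(J^r_{p,q,k,1}) - h^a_l(J^l_{p,q,k,1})| \leq 4 N_{p,q,k}$, where $N_{p,q,k}$ is the number of loops that encircle exactly one of the two faces $J^l_{p,q,k,1}$, $J^r_{p,q,k,1}$; any such loop must cross every lattice path joining these two faces, and in particular must have diameter (hence length) at least of order the distance between them, which is $c\, |\alpha^r_p - \alpha^l_p|\, \lambda_1 (2m)^{1/3}(1+o(1))$ for some constant $c>0$.

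First I would fix $p,q$ and choose $S = S_{p,q}$ to be any fixed set of $a$-edges — for instance, a lattice path of $a$-edges in $\tilde D_m$ from $J^l_{p,q,k,1}$ to $J^r_{p,q,k,1}$ that stays inside $\Lambda^{p,q,l}_L \cup \Lambda^{p,q,r}_L$ together with a band connecting the two boxes; its size is $|S| = O(M^2 (\log m)^4 + m^{1/3})$, certainly polynomial in $m$. Any loop encircling exactly one of the two faces for some $1\le k\le M$ must intersect $S$ (a loop separating the two faces must cross a path joining them), and as noted it must have length at least $d_m := c_1 m^{1/3}$ for a suitable constant $c_1>0$ and all large $m$. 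Applying Lemma~\ref{lem:loopsalmostsure} with this $S$ and $d = d_m$,
\begin{equation}
\mathbb{P}_{\mathrm{Az}}\left[\exists \gamma\in\mathcal{D}_l \text{ intersecting } S \text{ with } \ell(\gamma)\ge d_m\right] \le \frac{|S|}{1-3a}(3a)^{d_m},
\end{equation}
which, since $3a<1$, tends to $0$ as $m\to\infty$ because $(3a)^{d_m}$ decays exponentially in $m^{1/3}$ while $|S|$ grows only polynomially. On the complement of this event, $N_{p,q,k} = 0$ simultaneously for all $1\le k \le M$, hence the averaged loop contribution vanishes identically. Taking a union bound over the finitely many pairs $(p,q)$, $1\le p\le L_2$, $1\le q\le L_1$, gives that $\frac{1}{4M}\sum_{k=1}^M (h^a_l(J^r_{p,q,k,1}) - h^a_l(J^l_{p,q,k,1})) = 0$ for all $p,q$ with probability $\to 1$, which is stronger than (and implies) the claimed convergence in probability.

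The main obstacle I anticipate is making rigorous the geometric claim that a loop separating the two faces $J^l_{p,q,k,1}$ and $J^r_{p,q,k,1}$ must have length of order at least $m^{1/3}$; this requires knowing that these two $a$-faces genuinely lie at Euclidean distance of order $m^{1/3}$ in $\tilde D_m$ (which follows from \eqref{eq:facesJl}, \eqref{eq:facesJr} and the scalings $\beta_m(q,k)$, $\tau_m(q)$, since $A_p$ and the $\beta_q$ are fixed and the $k$-dependent corrections are only of order $M(\log m)^2 = o(m^{1/3})$), and a clean argument that an oriented loop in the squished graph that winds around one face but not the other — equivalently, that crosses a fixed dual path between them — contains at least as many $a$-edges as (a constant times) that distance, because each $a$-dimer moves one by a bounded amount. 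A secondary, more routine, point is to double-check that the edge set $S$ really does catch every relevant loop uniformly in $k$, which is why I enlarge $S$ to a band covering all of $\Lambda^{p,q,l}_L\cup\Lambda^{p,q,r}_L$ and the strip between them; since $M^4(\log m)^8/m^{1/3}\to 0$, this enlargement keeps $|S|$ of polynomial size and does not affect the exponential bound.
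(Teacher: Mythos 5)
Your reduction of $h^a_l(J^r_{p,q,k,1})-h^a_l(J^l_{p,q,k,1})$ to the loops encircling exactly one of the two faces is correct, but the next step --- the claim that any such loop ``must have diameter (hence length) at least of order the distance between them'' --- is false, and the argument collapses there. A loop of length $4$ winding once around the single face $J^l_{p,q,k,1}$ encircles $J^l_{p,q,k,1}$ and not $J^r_{p,q,k,1}$, so it contributes $\pm4$ to the difference; it does cross every path joining the two faces (any such path must exit it), yet its length is $4$, not of order $m^{1/3}$. Such short loops are not rare: the probability that some short loop surrounds a given $a$-face is bounded below by a positive constant, so each summand $h^a_l(J^r_{p,q,k,1})-h^a_l(J^l_{p,q,k,1})$ is a genuinely nondegenerate $O(1)$ random variable, and the event that your $N_{p,q,k}$ vanishes for all $k$ does \emph{not} have probability tending to one. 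The statement you end up proving --- that each individual difference is zero with high probability --- is simply not true; only the \emph{average} over the $M$ well-separated faces $J^r_{p,q,k,1}$, $1\le k\le M$, tends to zero, and your argument never uses the averaging.

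The paper's proof is built precisely around this point. Lemma~\ref{lem:loopsalmostsure} is used only to truncate, not to exclude: with probability $1-o(1)$ all loops meeting the relevant region have length $O((\log m)^2)$, and after transferring to $\mathbb{P}_{\mathrm{sm}}$ via Proposition~\ref{prop:couple} one may further truncate to length $O(\log L)$, so each $h^a_l(J^r_{p,q,k,1})$ is replaced by a local bounded functional $h^{a,\tilde d_m}_l$. One then shows $\mathbb{E}_{\mathrm{sm}}\big[h^{a,\tilde d_m}_l\big]=0$ by the sign symmetry of the loop distribution, uses Proposition~\ref{prop:couple2} to show that the truncated loop heights at $J^r_{p,q,k_1,1}$ and $J^r_{p,q,k_2,1}$ (which are at distance $\gtrsim(\log m)^2$ for $k_1\ne k_2$) are asymptotically uncorrelated, and concludes by Chebyshev that the average over $k$ is $o(1)$ in probability. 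Any repair of your proposal must retain this mean-zero plus decorrelation plus variance step; the Peierls bound alone cannot remove the $O(1)$ contribution of small loops surrounding a single face.
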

A similar computation shows that $\frac{1}{4 M} \sum_{k=1}^{M}h^a_l(J_{p,q,1,k}^r) - h^a_l(J_{p,q,1,k}^l) \to 0$ in probability as $m \to \infty$.

\begin{proof}
Let $D_m^a$ denote the set of all $a$-edges in the Aztec diamond and let $\delta>0$ be given.  Take $S= D_m^a$ and let $d= \delta (\log m)^2$ in Lemma~\ref{lem:loopsalmostsure}.  Write
	\begin{equation}
		A= \{ \mbox{all loops in the Aztec diamond that have length} \leq  d \}.
	\end{equation}
	Lemma~\ref{lem:loopsalmostsure} gives 
	\begin{equation}
		\mathbb{P}_{\mathrm{Az}} [A^c] \leq \frac{|D_m^a|}{1-3 a} (3a)^{\delta (\log m)^2} \leq Cm^2 (3a)^{\delta (\log m)^2}=o(1)
	\end{equation}
	as $m\to \infty$.  Define 
	\begin{equation}
		h_l^{a,d}(f)=\mbox{the loop height given by loops of length less than or equal to } d.
	\end{equation}
	If $d_m =\delta(\log m)^{2+\epsilon}$ for any $\epsilon>0$, then in the Aztec diamond $h_l^a(f)= h_l^{a,d_m}(f)$ in the set $A$.  Thus, we have
	\begin{equation}
		\begin{split}
			&\mathbb{P}_{\mathrm{Az}}\bigg[ \frac{1}{4M} \sum_{k=1}^M h_l^a (J_{p,q,k,1}^r) > \varepsilon\bigg]\\
			&\leq \mathbb{P}_{\mathrm{Az}}\bigg[\bigg\{ \frac{1}{4M} \sum_{k=1}^M h_l^{a,d_m} (J_{p,q,k,1}^r) > \varepsilon \bigg\} \cap A\bigg] + \mathbb{P}_{\mathrm{Az}}[A^c] \\ 
			&\leq \mathbb{P}_{\mathrm{Az}}\bigg[ \frac{1}{4M} \sum_{k=1}^M h_l^{a,d_m} (J_{p,q,k,1}^r) > \varepsilon \bigg]+o(1).
		\end{split}
	\end{equation}
	By the choice of $L$ in~\eqref{eq:L}, the random variable $\frac{1}{4M} \sum_{k=1}^M h_l^{a,d_m} (J_{p,q,k,1}^r)$ only depends on dimer configurations in $\Lambda_L^{p,q,r}$.  Hence, by Proposition~\ref{prop:couple} 
	\begin{equation} \label{eq:lemproof:intermediatebound}
		\mathbb{P}_{\mathrm{Az}}\left[ \frac{1}{4M} \sum_{k=1}^M h_l^a (J_{p,q,k,1}^r) > \varepsilon\right] \leq \mathbb{P}_{\mathrm{sm}}\left[ \frac{1}{4M} \sum_{k=1}^M h_l^{a,d_m} (J_{p,q,k,1}^r) > \varepsilon\right]+o(1).
	\end{equation}
		Write	
\begin{equation}
	B= \{ \mbox{all loops in $\Lambda^{p,q,r}_L$ that have length} \leq  \frac{(2+\delta) \log L}{\log (1/3a)} \}.
\end{equation}
Then, we have 
\begin{equation}
	\mathbb{P}_{\mathrm{sm}} [B^c] \leq \frac{|\Lambda^{p,q,r}_L| }{1-3 a} (3a)^{-\frac{(2+\delta) \log L}{\log 3a} } = \frac{4L^2 }{1-3 a} L^{-2-\delta}=o(1) 
\end{equation}
	as $m\to \infty$.  If $\tilde{d}_m =  -\frac{3 \log L}{\log 3a} $, we have that $h_l^{a,d}(f)= h_l^{a,\tilde{d}_m}(f)$ in the set $B$.  We have that 
\begin{equation}
		\begin{split}
			&\mathbb{P}_{\mathrm{sm}}\bigg[ \frac{1}{4M} \sum_{k=1}^M h_l^{a,d_m} (J_{p,q,k,1}^r) > \varepsilon\bigg]\\
			&\leq \mathbb{P}_{\mathrm{sm}}\bigg[\bigg\{ \frac{1}{4M} \sum_{k=1}^M h_l^{a,\tilde{d}_m} (J_{p,q,k,1}^r) > \varepsilon \bigg\} \cap B\bigg] + \mathbb{P}_{\mathrm{sm}}[B^c] \\ 
			&\leq \mathbb{P}_{\mathrm{sm}}\bigg[ \frac{1}{4M} \sum_{k=1}^M h_l^{a,\tilde{d}_m} (J_{p,q,k,1}^r) > \varepsilon \bigg]+o(1).
		\end{split}
	\end{equation}
Then, we have reduced~\eqref{eq:lemproof:intermediatebound} to 
\begin{equation} \label{eq:lemproof:intermediatebound2}
	\mathbb{P}_{\mathrm{Az}}\left[ \frac{1}{4M} \sum_{k=1}^M h_l^a (J_{p,q,k,1}^r) > \varepsilon\right] \leq \mathbb{P}_{\mathrm{sm}}\left[ \frac{1}{4M} \sum_{k=1}^M h_l^{a,\tilde{d}_m} (J_{p,q,k,1}^r) > \varepsilon\right]+o(1).
	\end{equation}
We focus on the right side of~\eqref{eq:lemproof:intermediatebound2}. {  We have that $\mathbb{E}_{\mathrm{sm}}\left[   h_l^{a} (J_{p,q,k,1}^r) \right]=\mathbb{E}_{\mathrm{sm}}\left[   h_l^{a,\tilde{d}_m} (J_{p,q,k,1}^r) \right] =0$, which follows immediately since the distribution of the loops is symmetric (that is, the probability of a configuration of loops is invariant under flipping the sign of all the loops due to the form of the correlation kernel of the full-plane smooth phase in~\eqref{smoothphaseeqn} and that there are only loops and double edges in the full-plane smooth phase almost surely from Theorem~\ref{thm:onecorridor}). By using Chebychev's inequality, we have}
\begin{equation}
\label{eq:chebychev}
	\mathbb{P}_{\mathrm{sm}}\left[ \frac{1}{4M} \sum_{k=1}^M h_l^{a,\tilde{d}_m} (J_{p,q,k,1}^r) > \varepsilon\right] \leq \frac{1}{(4M\varepsilon)^2}\mathrm{Var}_{\mathrm{sm}}\left[\sum_{k=1}^M h_l^{a,\tilde{d}_m} (J_{p,q,k,1}^r) \right]
\end{equation}
and the right side expands to
	\begin{equation}
		\begin{split}
			&\mathrm{Var}_{\mathrm{sm}}\left[\sum_{k=1}^M h_l^{a,\tilde{d}_m} (J_{p,q,k,1}^r) \right] = \sum_{k=1}^M \mathrm{Var}_{\mathrm{sm}}\left[h_l^{a,\tilde{d}_m} (J_{p,q,k,1}^r) \right] \\& + 2\sum_{1\leq k_1<k_2 \leq M}  \mathrm{Cov}_{\mathrm{sm}}\left[ h_l^{a,\tilde{d}_m} (J_{p,q,k_1,1}^r)h_l^{a,\tilde{d}_m} (J_{p,q,k_2,1}^r) \right].
		\end{split}
	\end{equation}
	We first show that 
	\begin{equation}\label{eq:covbound}
		\mathrm{Cov}_{\mathrm{sm}}\left[ h_l^{a,\tilde{d}_m} (J_{p,q,k_1,1}^r)h_l^{a,\tilde{d}_m} (J_{p,q,k_2,1}^r) \right]  =o(1)
	\end{equation}
		as $m\to \infty$. Each random variable $h_l^{a,\tilde{d}_m} (J_{p,q,k,1}^r)$ only depends on at most $\tilde{d}_m$ loops, since the loops are bounded by $\tilde{d}_m$, which means that $h_l^{a,\tilde{d}_m} (J_{p,q,k,1}^r)$ only depends on dimer configurations inside $\Lambda(J_{p,q,k,1}^r, 2 [\tilde{d}_m])$.  This means that Proposition~\ref{prop:couple2} applies with $R$ chosen to be $2\tilde{d}_m^2+\tilde{d}_m$, which means that for $|r_1|,|r_2| < \tilde{d}_m$  
\begin{equation}
	\begin{split}\label{eq:errorterm}
		&C (2d_m^2+d_m)^2 e^{-c_0 (\log m)^2} \geq \bigg|	\mathbb{P}_{\mathrm{sm}}\left[   h^{a,\tilde{d}_m}_l (J_{p,q,k_1,1}^r)=r_1 ,h^{a,\tilde{d}_m} (J_{p,q,k_2,1}^r)=r_2 \right]\\&- \mathbb{P}_{\mathrm{sm}}\left[   h^{a,\tilde{d}_m} (J_{p,q,k_1,1}^r)=r_1 \right] \mathbb{P}_{\mathrm{sm}}\left[h^{a,\tilde{d}_m} (J_{p,q,k_2,1}^r)=r_2 \right]\bigg| 
	\end{split}
\end{equation}
	for $k_1 \not = k_2$ and $C,c_0$ as given in Proposition~\ref{prop:couple2}.  Using the above equation, we have that
		\begin{equation}
		\begin{split}
			&\mathbb{E}_{\mathrm{sm}}\left[   h_l^{a,\tilde{d}_m} (J_{p,q,k_1,1}^r) h_l^{a,\tilde{d}_m} (J_{p,q,k_2,1}^r) \right]\\& = \sum_{|r_1|,|r_2| <\tilde{d}_m}r_1r_2 \mathbb{P}_{\mathrm{sm}}\left[   h_l^{a,\tilde{d}_m} (J_{p,q,k_1,1}^r)=r_1 ,h_l^{a,\tilde{d}_m} (J_{p,q,k_2,1}^r)=r_2 \right]. \\
			&= o(1)+\sum_{|r_1|,|r_2| <\tilde{d}_m}r_1r_2 \mathbb{P}_{\mathrm{sm}}\left[   h^{a,\tilde{d}_m}_l (J_{p,q,k_1,1}^r)=r_1]\mathbb{P}_{\mathrm{sm}} [h^{a,\tilde{d}_m}_l (J_{p,q,k_2,1}^r)=r_2 \right]  \\
			&=\mathbb{E}_{\mathrm{sm}}\left[   h_l^{a,\tilde{d}_m} (J_{p,q,k_1,1}^r) \right]\mathbb{E}_{\mathrm{sm}}\left[   h_l^{a,\tilde{d}_m} (J_{p,q,k_2,1}^r) \right]+o(1) 
		\end{split}
	\end{equation}
	where the $o(1)$ error term after the second equality comes from the bound in~\eqref{eq:errorterm} multiplied by the number of terms in the sum. { By recalling that $\mathbb{E}_{\mathrm{sm}}\left[   h_l^{a} (J_{p,q,k,1}^r) \right]=\mathbb{E}_{\mathrm{sm}}\left[   h_l^{a,\tilde{d}_m} (J_{p,q,k,1}^r) \right] =0$, this means we have verified~\eqref{eq:covbound}}. By noting that $\mathbb{E}_{\mathrm{sm}}\left[   h_l^{a,\tilde{d}_m} (J_{p,q,k,1}^r)^2 \right] < \tilde{d}_m^2$ and $\tilde{d}_m^2/M \to 0$ as $m\to \infty$, 
we have shown that
$$
\mathbb{P}_{\mathrm{sm}}\left[ \frac{1}{4M} \sum_{k=1}^M h_l^{a,\tilde{d}_m} (J_{p,q,k,1}^r) > \varepsilon\right] \to 0 \hspace{5mm}\mbox{ as } m \to \infty
$$
as required.

\end{proof}
For the next lemma, let $\mathtt{Path}(p,q,r)$ be the event in the Aztec diamond that there is a path that intersects the set $S^{p,q,r}$ of $a$-edges between $J^r_{p,q,1,1}$ and $J^r_{p,q,M,1}$. 
\begin{lemma} \label{lem:nocorridors}
	For $a \in (0,1/3)$, and all $1 \leq p \leq L_2$, $1 \leq q \leq L_1$, 
	\begin{equation}
		\lim_{m \to \infty} \mathbb{P}_{\mathrm{Az}}[\mathtt{Path}(p,q,r)]=0.
	\end{equation}

\end{lemma}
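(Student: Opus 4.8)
The plan is to localize the event $\mathtt{Path}(p,q,r)$ to a box near the rough--smooth boundary, transfer it to the full-plane smooth phase via Proposition~\ref{prop:couple}, and there exclude a long path-segment using Theorem~\ref{thm:onecorridor} (no biinfinite paths) together with Lemma~\ref{lem:loopsalmostsure} (loops are short). Concretely, I would fix the box $\Lambda'=\Lambda(J^r_{p,q,\lfloor M/2\rfloor,1},L')$ with $L'=C_1L$, $L$ as in~\eqref{eq:L} and $C_1$ a large even absolute constant. Then $\Lambda'$ lies well inside $D_m$ (its centre is at distance $\asymp m$ from $\partial D_m$ while $L'=o(m)$), one has $S^{p,q,r}\subset\Lambda'$, and the number of $a$-dimers in any path-segment joining $S^{p,q,r}$ to $\partial\Lambda'$ is at least $c_2L$ for some absolute $c_2>0$ once $C_1$ is large, since consecutive $a$-dimers of a path lie at bounded distance. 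Because $L'\asymp M(\log m)^2$ and $M^4(\log m)^8/m^{1/3}\to0$, the hypothesis $C_0(2L'+L'^2)^2\le m^{1/3}$ of Proposition~\ref{prop:couple} holds for all large $m$, and $J^r_{p,q,\lfloor M/2\rfloor,1}$ is of the form~\eqref{xyscaling}, so $d_{TV}(p_{\mathrm{Az}}(\cdot|\Lambda'),p_{\mathrm{sm}}(\cdot|\Lambda'))\le eC_0(2L'+L'^2)^2/m^{1/3}=o(1)$.

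Next I would introduce the event $\mathtt{LongChain}(p,q,r)$, measurable with respect to the dimers in $\Lambda'\cup\partial\Lambda'$, that there is a chain of $a$-edges $(f_1,f_3,\dots,f_{2N-1})$ with $N\ge D_m:=\lceil c_2L\rceil$, all inside $\Lambda'$, satisfying the local conditions in the definition of a path (each $f_{2i+1}$ covered by an $a$-dimer not forming a double edge after squishing, consecutive edges joined by distinct uncovered $b$-edges) and respecting the mirror convention at every meeting point it visits, with $f_1\in S^{p,q,r}$. All these requirements are local --- in particular the mirror pairing at a meeting point depends only on the $a$-heights of the four surrounding $a$-faces. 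On $\mathtt{Path}(p,q,r)$ there is a genuine path $\gamma$ covering some $a$-edge $e\in S^{p,q,r}$; since $\gamma$ must reach $\partial D_m$, which lies far outside $\Lambda'$, following $\gamma$ from $e$ in at least one direction yields exactly such a chain inside $\Lambda'$ of length $\ge D_m$. Thus $\mathtt{Path}(p,q,r)\subseteq\mathtt{LongChain}(p,q,r)$, and by the coupling $\mathbb{P}_{\mathrm{Az}}[\mathtt{Path}(p,q,r)]\le\mathbb{P}_{\mathrm{sm}}[\mathtt{LongChain}(p,q,r)]+o(1)$.

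It then remains to bound $\mathbb{P}_{\mathrm{sm}}[\mathtt{LongChain}(p,q,r)]$. In the full-plane smooth phase each $a$-dimer belongs to a double edge, an oriented loop, or a biinfinite path, and by Theorem~\ref{thm:onecorridor} there are almost surely no biinfinite paths; hence a mirror-respecting chain of non-double-edge $a$-dimers is almost surely a contiguous sub-segment of a single loop $\gamma$, which then satisfies $\ell(\gamma)\ge N\ge D_m$ (the $f_{2i+1}$ being distinct) and intersects $S^{p,q,r}$ since it contains the dimer on $f_1$. Applying Lemma~\ref{lem:loopsalmostsure} with $S=S^{p,q,r}$ and $d=D_m$ gives
$$\mathbb{P}_{\mathrm{sm}}[\mathtt{LongChain}(p,q,r)]\le\frac{|S^{p,q,r}|}{1-3a}(3a)^{D_m},$$
and since $|S^{p,q,r}|=O(M(\log m)^2)$ is polylogarithmic in $m$ while $D_m\ge c_2L\to\infty$ and $3a<1$, the right-hand side tends to $0$. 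Combining with the previous display completes the proof.

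I expect the main obstacle to be the middle step: showing that $\mathtt{LongChain}$ is genuinely a local event and, above all, that ``respecting the mirror convention'' survives the localization, so that a genuine path intersecting $S^{p,q,r}$ forces a long local chain and, conversely, in the smooth phase a long local chain cannot split across several loops but must lie inside one long loop. Establishing the latter requires the precise mirror construction of Section~\ref{subsec:squish}: one must check that at a meeting point a mirror-respecting chain has no freedom and must continue along the same loop.
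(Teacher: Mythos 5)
Your proposal is correct and follows essentially the same route as the paper: localize the path event to a box of side $\asymp M(\log m)^2$ around $S^{p,q,r}$, transfer to the full-plane smooth phase via Proposition~\ref{prop:couple}, rule out path-segments there by Theorem~\ref{thm:onecorridor} so that the long chain must sit inside a single loop, and finish with Lemma~\ref{lem:loopsalmostsure}. The only difference is cosmetic: you formalize the localized event ($\mathtt{LongChain}$, with the mirror-convention caveat) more explicitly than the paper, which simply speaks of ``a sequence of $a$-edges'' of length at least $d_m=[M\lambda_1(\log m)^2]$ in $\Lambda_L^{p,q,r}$.
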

\begin{proof}
	Let us call a consecutive set of $a$-edges that are part of a loop or a path a \emph{sequence} of $a$-edges. Since a path starts and ends at the boundary, any path that intersects $S^{p,q,r}$ has to have a sequence of $a$-edges in $\Lambda_L^{p,q,r}$ of length greater than or equal to $d_m = [M \lambda_1 (\log m)^2]$ by definition of $L$ in~\eqref{eq:L}. Then, we have
	\begin{equation}
		\begin{split} \label{eq:pathsAztectosmooth}
			&\mathbb{P}_{\mathrm{Az}}[\mathtt{Path}(p,q,r)] \\
			&\leq   \mathbb{P}_{\mathrm{Az}} [ \exists \mbox{ a seq. of $a$-edges in $\Lambda^{p,q,r}_L$ intersecting $S^{p,q,r}$ with length} \geq d_m] \\
			&= \mathbb{P}_{\mathrm{sm}} [ \exists \mbox{ a seq. of $a$-edges in $\Lambda^{p,q,r}_L$ intersecting $S^{p,q,r}$ with length} \geq d_m] +o(1)
		\end{split}
	\end{equation}
	where the last step follows by applying Proposition~\ref{prop:couple}. The last probability is the probability in the full-plane smooth phase of the restriction of an event in the full-plane.   By Theorem~\ref{thm:onecorridor}, there are no paths almost surely in the full-plane smooth phase, so the sequence of $a$-edges has to be part of a loop.  Thus, we have 
\begin{equation}
		\begin{split}
			&\mathbb{P}_{\mathrm{sm}} [ \exists \mbox{ a seq. of $a$-edges in $\Lambda^{p,q,r}_L$ intersecting $S^{p,q,r}$ with length} \geq d_m]  \\
			&\leq  \mathbb{P}_{\mathrm{sm}} [ \mbox{there is a loop intersecting $S^{p,q,r}$ of length} \geq d_m]  \\
			&\leq\frac{|S^{p,q,r}|}{1-3a} (3a)^{d_m} \leq \frac{M}{1-3a} (3a)^{[M \lambda_1 (\log m)^2]} =o(1)
		\end{split}
	\end{equation}
as $m\to \infty$. Combining this with~\eqref{eq:pathsAztectosmooth}, we have proved the lemma.  

\end{proof}

We now give the proof of Theorem~\ref{thm:main}.

\begin{proof}[Proof of Theorem~\ref{thm:main}]
	From the formulas for $\kappa_m$ and $\nu_m$ in~\eqref{eq:kappa} and~\eqref{def:nu_m}, we have that
	\begin{equation}
		\begin{split} \label{eq:thmproofbigsumsplit}
			&\kappa_m(\{\beta_q \} \times A_p) - \nu_m(\{\beta_q \} \times A_p)= \frac{1}{4M} \sum_{k=1}^M h^a_\mathtt{c}  (J^r_{p,q,1,1}) - h^a_\mathtt{c} ( J^l_{p,q,1,1}) \\&- h^a_\mathtt{c} ( J^r_{p,q,k,1})- h^a_l ( J^r_{p,q,k,1})+ h^a_\mathtt{c} ( J^l_{p,q,k,1})+ h^a_l ( J^l_{p,q,k,1})\\
			&=\frac{1}{4M} \sum_{k=1}^M h^a_l ( J^l_{p,q,k,1})- h^a_l ( J^r_{p,q,k,1}) +\frac{1}{4M} \sum_{k=2}^M h^a_\mathtt{c}  (J^r_{p,q,1,1})- h^a_\mathtt{c}  (J^r_{p,q,k,1})\\
			&-\frac{1}{4M} \sum_{k=2}^M h^a_\mathtt{c}  (J^l_{p,q,1,1})- h^a_\mathtt{c}  (J^l_{p,q,k,1}) 
		\end{split} 
	\end{equation}
where the first equality follows from using~\eqref{loopcorridor}. We have that 
$$
\frac{1}{4M} \sum_{k=1}^M h^a_l ( J^l_{p,q,k,1})- h^a_l ( J^r_{p,q,k,1}) \to 0
$$
	as $m\to \infty$ with probability tending to one by Lemma~\ref{lem:loopssmall}. From Lemma~\ref{lem:nocorridors}, no paths separate $J^r_{p,q,1,1}$ and $J^r_{p,q,k,1}$ for $1 \leq k \leq M$ with probability tending to one, and so we conclude that 
$$	
	\frac{1}{4M} \sum_{k=2}^M h^a_\mathtt{c}  (J^r_{p,q,1,1})- h^a_\mathtt{c}  (J^r_{p,q,k,1}) \to 0
	$$
	as $m \to \infty$ with probability tending to one. { A similar argument shows that 
$$	
	\frac{1}{4M} \sum_{k=2}^M h^a_\mathtt{c}  (J^l_{p,q,1,1})- h^a_\mathtt{c}  (J^l_{p,q,k,1}) \to 0
	$$
with probability tending to one.  }
 We have shown that  
	$$
\kappa_m(\{\beta_q \} \times A_p) - \nu_m(\{\beta_q \} \times A_p) \to 0
	$$
	as $m\to \infty$ with probability tending to 1.  Since $\nu_m$ converges to the Airy kernel point process weakly, we conclude that so does $\kappa_m$.  
\end{proof}

\section{Proof of Proposition~\ref{prop:nu_mconverge}}  \label{sec:nuconverge}
{	
Before giving the proof of Proposition~\ref{prop:nu_mconverge}, we need the following lemma.
	\begin{lemma} \label{lem:subtlecomps} There exists $R>0$ such that 
\begin{equation}
\lim_{m \to \infty} \mathbb{E}_{\mathrm{Az}}\bigg[\exp\bigg[\frac{1}{M}\sum_{p=1}^{L_2} \sum_{q=1}^{L_1} \sum_{k=1}^M w_{p,q}(h^a(J_{p,q,k,1}^r)-h^a(J_{p,q,k,k}^r))\bigg]=1
\end{equation}
and 
\begin{equation}
\lim_{m \to \infty} \mathbb{E}_{\mathrm{Az}}\bigg[\exp\bigg[\frac{1}{M}\sum_{p=1}^{L_2} \sum_{q=1}^{L_1} \sum_{k=1}^M w_{p,q}(h^a(J_{p,q,k,k}^r)-h^a(J_{p,q,1,k}^r))\bigg]=1
\end{equation}
		for all $|w_{p,q}|<R$ where $1 \leq p \leq L_2$ and $1 \leq q \leq L_1$. 
	\end{lemma}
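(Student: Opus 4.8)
The plan is to prove both limits by showing that the random variable inside each exponential --- write it generically as $X_m=\frac1M\sum_{p=1}^{L_2}\sum_{q=1}^{L_1}\sum_{k=1}^{M}w_{p,q}Z_{p,q,k}$, where $Z_{p,q,k}=h^a(J^r_{p,q,k,1})-h^a(J^r_{p,q,k,k})$ in the first identity and $Z_{p,q,k}=h^a(J^r_{p,q,k,k})-h^a(J^r_{p,q,1,k})$ in the second --- tends to $0$ strongly enough that $\mathbb{E}_{\mathrm{Az}}[e^{X_m}]\to1$. Concretely I would establish: (i) $\mathbb{E}_{\mathrm{Az}}[X_m]\to0$; (ii) $\mathrm{Var}_{\mathrm{Az}}(X_m)\to0$, so that $X_m\to0$ in probability; and (iii) the family $\{e^{X_m}\}_m$ is uniformly integrable provided $|w_{p,q}|<R$ with $R>0$ small. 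Granting (i)--(iii), $e^{X_m}\to1$ in $L^1$ and the lemma follows; note that the lower bound $\liminf_m\mathbb{E}_{\mathrm{Az}}[e^{X_m}]\ge1$ is already a consequence of (i) and Jensen's inequality, so the work is in the matching upper bound. The key geometric fact used repeatedly is that each $Z_{p,q,k}$ is an $a$-height increment between two $a$-faces at the rough--smooth boundary whose lattice separation is $O(M(\log m)^2)$, which by the hypothesis $M^4(\log m)^8/m^{1/3}\to0$ is $\ll m^{1/12}$; hence, for a fixed pair $(p,q)$, all the relevant $a$-faces lie in a single box to which the coupling of Proposition~\ref{prop:couple} applies (see the choice of $L$ in~\eqref{eq:L}).

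For (i): the two $a$-faces defining $Z_{p,q,k}$ differ by a lattice displacement parallel to $e_2$ (resp.\ $e_1$). Writing $\mathbb{E}_{\mathrm{Az}}[Z_{p,q,k}]$ as the signed sum of $O(M(\log m)^2)$ dimer probabilities via Theorem~\ref{localstatisticsthm} and splitting $K^{-1}_{a,1}=\mathbb{K}^{-1}_{1,1}-\mathbb{K}_{\mathrm{A}}$, the $\mathbb{K}^{-1}_{1,1}$ part reproduces the full-plane smooth-phase expectation, which vanishes because that phase is flat in the $e_1$ and $e_2$ directions (as already used in the proof of Proposition~\ref{prop:nu_mconverge}), while the $\mathbb{K}_{\mathrm{A}}$ part contributes $O(M(\log m)^2)\cdot O(m^{-1/3})=o(1)$ by Theorem~\ref{Airyasymptotics}; so $\mathbb{E}_{\mathrm{Az}}[Z_{p,q,k}]=o(1)$ uniformly and (i) holds. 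Step (ii) is the same computation one level up: $\mathrm{Var}_{\mathrm{Az}}(X_m)=\frac1{M^2}\sum_{k,k'}\mathrm{Cov}_{\mathrm{Az}}(X_k,X_{k'})$ with $X_k=\sum_{p,q}w_{p,q}Z_{p,q,k}$. The covariances diagonal in $(p,q)$ may be transferred to the full-plane smooth phase via Proposition~\ref{prop:couple}, the transfer error being a bounded functional (size $O(M^2(\log m)^4)$) times $d_{TV}=O(M^2(\log m)^4/m^{1/3})$, hence $O(M^4(\log m)^8/m^{1/3})=o(1)$; in the smooth phase, for $k\ne k'$ the two segments defining $X_k$ and $X_{k'}$ lie at lattice distance $\ge\lambda_1(\log m)^2$, so exponential decay of correlations (equivalently Proposition~\ref{prop:couple2}) gives $\mathrm{Cov}_{\mathrm{sm}}=o(1)$, while $\mathrm{Var}_{\mathrm{sm}}(X_k)=O(1)$. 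The covariances off-diagonal in $(p,q)$ are covariances of height increments over $O(M(\log m)^2)$-segments at macroscopic separation, so also $o(1)$ by decay of the Aztec kernel. The $1/M^2$ prefactor then yields $\mathrm{Var}_{\mathrm{Az}}(X_m)=O(1/M)+o(1)=o(1)$.

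Step (iii) is the heart of the matter. By two applications of Hölder's inequality --- first over the finitely many $(p,q)$, then over $k$ using $e^{(1+\delta)|X_m|}\le\prod_k e^{(1+\delta)|X_k|/M}$ --- uniform integrability of $\{e^{X_m}\}$ reduces to a bound $\sup_{m,p,q,k}\mathbb{E}_{\mathrm{Az}}[e^{c|w_{p,q}|\,|Z_{p,q,k}|}]<\infty$ with $c=c(L_1,L_2,\delta)$. Decompose $Z_{p,q,k}$ via~\eqref{loopcorridor} into its loop part $h^a_l(J^r_{p,q,k,1})-h^a_l(J^r_{p,q,k,k})$ and its corridor part $h^a_\mathtt{c}(J^r_{p,q,k,1})-h^a_\mathtt{c}(J^r_{p,q,k,k})$. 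The loop part is treated directly under $\mathbb{P}_{\mathrm{Az}}$: it equals $\pm4$ times the signed number of loops enclosing exactly one of the two $a$-faces, and applying Lemma~\ref{lem:loopsalmostsure} with $S$ a disk of radius $\ell$ about a face (for loops of length at most $2\ell$) shows that the number of loops enclosing a fixed $a$-face has a finite exponential moment, uniformly in $m$, precisely when $a<1/3$ (through the summability of $\sum_\ell\ell^2(3a)^\ell$ and then of $\sum_j j^2(9a^2)^j$). The corridor part vanishes off the event that a path separates the two $a$-faces; by the argument of Lemma~\ref{lem:nocorridors} (no long $a$-edge sequences in the smooth phase, transported back by Proposition~\ref{prop:couple}) this event has probability $o(1)$, but --- and this is the delicate point --- its contribution to the exponential moment cannot be controlled by the trivial $O(M(\log m)^2)$ pointwise bound on $Z_{p,q,k}$, which would be swamped by the merely polynomial coupling error; instead one needs a Peierls-type tail bound on the number of separating paths (again requiring $a<1/3$) dominating the corridor increment by a light-tailed variable uniformly in $m$. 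The second identity is proved identically with $e_1$ and $e_2$ exchanged. I expect the main obstacle to be exactly this last step: showing that the $a$-height increment at the rough--smooth boundary has exponential moments uniform in the slowly growing separation of the two faces --- which is where both the Peierls estimate Lemma~\ref{lem:loopsalmostsure} and the restriction $a<1/3$ are genuinely needed, and why a crude total-variation comparison with the smooth phase is insufficient.
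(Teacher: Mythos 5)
Your overall program --- show $X_m\to 0$ in probability and then upgrade to $\mathbb{E}_{\mathrm{Az}}[e^{X_m}]\to 1$ via uniform integrability --- is a genuinely different route from the paper's, and it contains a genuine gap at exactly the point you flag yourself. The paper does not argue through convergence in probability at all. It encodes the height increments as linear statistics of the particle (determinantal) process on the lines $\mathcal{L}_m$, writes $\mathbb{E}_{\mathrm{Az}}[e^{X_m}]$ exactly as a Fredholm determinant $\det(\mathbbm{I}+(e^{\psi/M}-1)\mathcal{K}_m)$, and then controls $\log\det$ by a cumulant/trace expansion. The traces are split into four families $T_0,\dots,T_3$ according to whether the kernel factors are the smooth part or the Airy correction and whether consecutive indices agree; $U_0,U_1,U_2$ vanish by Lemmas 4.1--4.2 of~\cite{BCJ:16}, and $U_3$ is killed by the crude bound $|T_3|\le C^r M^{2r}(\log m)^{2r}m^{-r/3}$ coming from Lemma~\ref{lem:Eformula} and Theorem~\ref{Airyasymptotics}, with absolute convergence of the whole series for $|w_{p,q}|<R$ small. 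This gives the exponential moment directly, with no need for a separate tightness or uniform integrability argument, and it works for all $a\in(0,1)$.

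The gap in your proposal is step (iii). To get uniform integrability you need $\sup_m\mathbb{E}_{\mathrm{Az}}[e^{c|Z_{p,q,k}|}]<\infty$, and after the decomposition~\eqref{loopcorridor} the corridor part is (four times) the signed number of global paths $\tilde{\Gamma}_{i,m}$ separating two faces whose separation grows like $M(\log m)^2$. None of the tools in the paper give an exponential tail for this count: Lemma~\ref{lem:loopsalmostsure} applies only to loops, Lemma~\ref{lem:nocorridors} only gives that the separating event has probability $o(1)$, and the total-variation coupling of Proposition~\ref{prop:couple} has error only polynomially small in $m$, which cannot transfer an exponential moment of an unbounded (up to $O(m)$) random variable. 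You would need a new Peierls-type estimate for paths, which is not available (paths, unlike loops, need not be short, and indeed the rough phase is full of them). Two further consequences of your route even if that estimate existed: it would restrict the lemma to $a<1/3$, whereas the paper's proof holds for all $a\in(0,1)$ --- and Proposition~\ref{prop:nu_mconverge}, which consumes this lemma, is asserted for all $a\in(0,1)$; and your variance transfer in step (ii) needs $M^6(\log m)^{12}/m^{1/3}\to 0$, which is fine for polylogarithmic $M$ but is not literally implied by the stated hypothesis $M^4(\log m)^8/m^{1/3}\to 0$.
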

This is proved in Appendix~\ref{Appendix:proofsubtlecomps}.
We can now give the proof of Proposition~\ref{prop:nu_mconverge}.

\begin{proof}[Proof of Proposition~\ref{prop:nu_mconverge}]

Let $w_{p,q}=u_{p,q} + \mathrm{i}v_{p,q}$ where $u_{p,q},v_{p,q} \in \mathbb{R}$ for $1 \leq p \leq L_2$, $1 \leq q \leq L_1$.  Define
\begin{equation}
\mathtt{R}_m= \sum_{p=1}^{L_2}\sum_{q=1}^{L_1}u_{p,q} \nu_{m}(\{\beta_q\} \times A_p), \hspace{4mm} 
\mathtt{R}'_m= \sum_{p=1}^{L_2}\sum_{q=1}^{L_1}v_{p,q} \nu_{m}(\{\beta_q\} \times A_p),
\end{equation}
\begin{equation}
\mathtt{S}_m= \sum_{p=1}^{L_2}\sum_{q=1}^{L_1}u_{p,q} \mu_{m}(\{\beta_q\} \times A_p), \hspace{4mm} 
\mathtt{S}'_m= \sum_{p=1}^{L_2}\sum_{q=1}^{L_1}v_{p,q} \mu_{m}(\{\beta_q\} \times A_p),
\end{equation}
\begin{equation}
\mathtt{T}_m= \sum_{p=1}^{L_2}\sum_{q=1}^{L_1}u_{p,q} \mu_{\mathrm{Ai}}(\{\beta_q\} \times A_p), \hspace{4mm} \mbox{and}  \hspace{4mm}
\mathtt{T}'_m= \sum_{p=1}^{L_2}\sum_{q=1}^{L_1}v_{p,q} \mu_{\mathrm{Ai}}(\{\beta_q\} \times A_p).
\end{equation}
We want to prove that there exists $r_0>0$ so that if $|u_{p,q}| \leq r_0$, $|v_{p,q}| \leq r_0$ for all $1 \leq p \leq L_2$, $1 \leq q \leq L_1$, then 
\begin{equation}\label{eq:prop_numconvergeproofeq1}
\lim_{m \to \infty} \mathbb{E}_{\mathrm{Az}}[e^{\mathtt{R}_m-\mathtt{T}_m+\mathrm{i} (\mathtt{R}'_m-\mathtt{T}'_m)}]=1.
\end{equation}
Define for $\zeta \in \mathbb{C}$, $|\mathrm{Re}\zeta|<1$
\begin{equation}
\mathtt{F}_m(\zeta) = \mathbb{E}_{\mathrm{Az}}[e^{\mathtt{R}_m-\mathtt{T}_m+\zeta (\mathtt{R}'_m-\mathtt{T}'_m)}].
\end{equation}
This is an analytic function in $\zeta$.  We need the following claim whose proof is postponed. 

\begin{claim}\label{claim:nu_mconverge}
There is an $r_0>0$ so that if $|u_{p,q}| \leq r_0/2$, $|v_{p,q}| \leq r_0/2$ for all $1 \leq p \leq L_2$, $1 \leq q \leq L_1$ then $\lim_{m \to \infty} \mathtt{F}_m(t)=1$ for all $t \in \mathbb{R}$, $|t| \leq 1$. 
\end{claim} 

We first show that this implies that  $\lim_{m \to \infty} \mathtt{F}_m(\mathrm{i})=1$ for $|u_{p,q}| \leq r_0/2$, $|v_{p,q}| \leq r_0/2$ for all $1 \leq p \leq L_2$, $1 \leq q \leq L_1$, which is exactly what we want to prove.  This follows if we show that the family of functions $\{\mathtt{F}_m(\zeta) \}_{m \geq 1}$ is a normal family for $|\mathrm{Re} \zeta|<1$, since this fact combined with Claim~\ref{claim:nu_mconverge} implies that $\mathtt{F}_m(\zeta) \to 1$ uniformly on compact subsets of  $|\mathrm{Re} \zeta|<1$, which implies~\eqref{eq:prop_numconvergeproofeq1}. We next show that $\{\mathtt{F}_m(\zeta) \}_{m \geq 1}$ is a normal family for $|\mathrm{Re} \zeta|<1$. If $t$ and $x$ are real and $|t| \leq r$, then 
\begin{equation}
e^{tx} \leq e^{|tx|} \leq e^{r|x|} \leq e^{rx}+e^{-rx}.
\end{equation}
If $|\mathrm{Re} \zeta|<1$, this inequality gives 
\begin{equation}
\begin{split}
|\mathtt{F}_m(\zeta)|& \leq \mathbb{E}_{\mathrm{Az}}[e^{\mathtt{R}_m-\mathtt{T}_m+ (\mathrm{Re} \zeta)(\mathtt{R}'_m-\mathtt{T}'_m)}] \\& \leq 
\mathbb{E}_{\mathrm{Az}}[e^{\mathtt{R}_m-\mathtt{T}_m+ \mathtt{R}'_m-\mathtt{T}'_m}]+\mathbb{E}_{\mathrm{Az}}[e^{\mathtt{R}_m-\mathtt{T}_m-(\mathtt{R}'_m-\mathtt{T}'_m)}]
\end{split}
\end{equation}
By Claim~\ref{claim:nu_mconverge}, the right side converges to 2 as $m$ tends to infinity and is bounded by 4 for sufficiently large $m$. Thus, we have $|\mathtt{F}_m(\zeta)| \leq 4$ for all $|\mathrm{Re} \zeta| \leq 1$ and for sufficiently large $m$. From Montel's theorem, we have that $\{\mathtt{F}_m(\zeta) \}_{m \geq 1}$ is a normal family for $|\mathrm{Re} \zeta|<1$.

It remains to prove Claim~\ref{claim:nu_mconverge}. We need the following claim whose proof is postponed until after the proof of Claim~\ref{claim:nu_mconverge}.
\begin{claim}\label{claim:nu_mconverge2}
There is an $r_1>0$ so that if $|u_{p,q}| \leq r_1/2$, $|v_{p,q}| \leq r_1/2$ for all $1 \leq p \leq L_2$, $1 \leq q \leq L_1$
\begin{equation}
\limsup_{m \to \infty} \mathbb{E}_{\mathrm{Az}} [e^{2(\mathtt{R}_m-\mathtt{S}_m)+2t(\mathtt{R}'_m-\mathtt{S}'_m)}] \leq 1 \label{claim:nu_mconverge2A}
\end{equation}
and
\begin{equation}
\liminf_{m \to \infty} \mathbb{E}_{\mathrm{Az}} [\mathtt{R}_m-\mathtt{S}_m +t (\mathtt{R}'_m-\mathtt{S}'_m)] =0\label{claim:nu_mconverge2B}
\end{equation}
for all $t\in \mathbb{R}$, $|t|\leq 1$.
\end{claim}

\begin{proof}[Proof of Claim~\ref{claim:nu_mconverge}]
From Theorem~\ref{thm:previousBCJ}, we have that there exists $r_0$ such that 
\begin{equation}\label{thm:previousBCJeqform}
\lim_{m \to \infty} \mathbb{E}_{\mathrm{Az}}[e^{\mathtt{S}_m-\mathtt{T}_m+t (\mathtt{S}_m'-\mathtt{T}_m')}]=1
\end{equation}
for $|u_{p,q}|\leq r_0$, $|v_{p,q}|\leq r_0$ for all $1 \leq p \leq L_2$, $1 \leq q \leq L_1$ and $|t| \leq 1$.  The Cauchy-Schwarz inequality gives 
\begin{equation}
\begin{split}
&\mathbb{E}_{\mathrm{Az}}[e^{\mathtt{R}_m-\mathtt{T}_m+t (\mathtt{R}_m'-\mathtt{T}_m')}]=\mathbb{E}_{\mathrm{Az}}[e^{\mathtt{R}_m-\mathtt{S}_m+\mathtt{S}_m-\mathtt{T}_m+t (\mathtt{R}_m'-\mathtt{S}'_m+\mathtt{S}'_m-\mathtt{T}_m')}] \\
&\leq \mathbb{E}_{\mathrm{Az}}[e^{2(\mathtt{R}_m-\mathtt{S}_m)+2t (\mathtt{R}_m'-\mathtt{S}_m')}]^{1/2}\mathbb{E}_{\mathrm{Az}}[e^{2(\mathtt{S}_m-\mathtt{T}_m)+2t (\mathtt{S}_m'-\mathtt{T}_m')}]^{1/2}
\end{split}
\end{equation}
It follows from~\eqref{claim:nu_mconverge2A} and~\eqref{thm:previousBCJeqform} that 
\begin{equation}
\limsup_{m \to \infty} \mathbb{E}_{\mathrm{Az}}[e^{\mathtt{R}_m-\mathtt{T}_m+t (\mathtt{R}_m'-\mathtt{T}_m')}]\leq 1.
\end{equation}

Conversely, by Jensen's inequality we have
\begin{equation}
\begin{split}
&\mathbb{E}_{\mathrm{Az}}[e^{\mathtt{R}_m-\mathtt{S}_m+\mathtt{S}_m-\mathtt{T}_m+t (\mathtt{R}_m'-\mathtt{S}'_m+\mathtt{S}'_m-\mathtt{T}_m')}] \\
&\geq \exp\big( \mathbb{E}_{\mathrm{Az}}[\mathtt{R}_m-\mathtt{S}_m+t(\mathtt{R}'_m-\mathtt{S}'_m)] \big)\exp\big( \mathbb{E}_{\mathrm{Az}}[\mathtt{R}_m-\mathtt{T}_m+t(\mathtt{R}'_m-\mathtt{T}'_m)]\big).
\end{split}
\end{equation}
It follows from~\eqref{claim:nu_mconverge2B} and~\eqref{thm:previousBCJeqform} that
\begin{equation}
\liminf_{m \to \infty} \mathbb{E}_{\mathrm{Az}}[e^{\mathtt{R}_m-\mathtt{T}_m+t (\mathtt{R}_m'-\mathtt{T}_m')}]\geq 1
\end{equation}
which proves the claim with $r_0=r_1/2$. 

\end{proof}

\begin{proof}[Proof of Claim~\ref{claim:nu_mconverge2}]
To prove~\eqref{claim:nu_mconverge2A}, we have by expanding out the definitions of $\mathtt{R}_m,\mathtt{S}_m, \mu_m$, and $\nu_m$
\begin{equation}
\begin{split}
&\mathbb{E}_{\mathrm{Az}} \left[ e^{2(\mathtt{R}_m-\mathtt{S}_m)+2t(\mathtt{R}'_m-\mathtt{S}'_m)} \right] =\mathbb{E}_{\mathrm{Az}}\bigg[ \exp \bigg[ \frac{2}{M} \sum_{p=1}^{L_2} \sum_{q=1}^{L_1} \sum_{k=1}^M (u_{p,q}+t v_{p,q}) \\
& \times \big( h^a(J_{p,q,k,1}^r) -h^a(J_{p,q,1,k}^r)-(h^a(J_{p,q,k,1}^l)-h^a(J_{p,q,1,k}^l)) \big) \bigg] \bigg]
\end{split}
\end{equation}
Applying Cauchy-Schwarz gives
\begin{equation}
\begin{split}\label{eq:claimproof:nu_mconverge2Aeq}
&\mathbb{E}_{\mathrm{Az}} \left[ e^{2(\mathtt{R}_m-\mathtt{S}_m)+2t(\mathtt{R}'_m-\mathtt{S}'_m)} \right] \\
& \leq \mathbb{E}_{\mathrm{Az}} \bigg[ \exp \bigg[ \frac{4}{M} \sum_{p=1}^{L_2} \sum_{q=1}^{L_1} \sum_{k=1}^M (u_{p,q}+t v_{p,q})  \big( h^a(J_{p,q,k,1}^r) -h^a(J_{p,q,1,k}^r) \big) \bigg] \bigg]^{1/2} \\
&\times\mathbb{E}_{\mathrm{Az}} \bigg[ \exp \bigg[ \frac{4}{M} \sum_{p=1}^{L_2} \sum_{q=1}^{L_1} \sum_{k=1}^M (u_{p,q}+t v_{p,q})  \big(-h^a(J_{p,q,k,1}^l) +h^a(J_{p,q,1,k}^l) \big) \bigg] \bigg]^{1/2}.
\end{split}
\end{equation}
For the first term on the right side of~\eqref{eq:claimproof:nu_mconverge2Aeq}, we use Cauchy-Schwarz again
\begin{equation}
\begin{split}
& \mathbb{E}_{\mathrm{Az}} \bigg[ \exp \bigg[ \frac{4}{M} \sum_{p=1}^{L_2} \sum_{q=1}^{L_1} \sum_{k=1}^M (u_{p,q}+t v_{p,q})  \big( h^a(J_{p,q,k,1}^r) -h^a(J_{p,q,1,k}^r) \big) \bigg] \bigg]  \\
&\leq \mathbb{E}_{\mathrm{Az}} \bigg[ \exp \bigg[ \frac{8}{M} \sum_{p=1}^{L_2} \sum_{q=1}^{L_1} \sum_{k=1}^M (u_{p,q}+t v_{p,q})  \big( h^a(J_{p,q,k,1}^r) -h^a(J_{p,q,k,k}^r) \big) \bigg] \bigg]^{\frac{1}{2}} \\ 
&\times \mathbb{E}_{\mathrm{Az}} \bigg[ \exp \bigg[ \frac{8}{M} \sum_{p=1}^{L_2} \sum_{q=1}^{L_1} \sum_{k=1}^M (u_{p,q}+t v_{p,q})  \big( h^a(J_{p,q,k,k}^r) -h^a(J_{p,q,1,k}^r) \big) \bigg] \bigg]^{\frac{1}{2}} 
\end{split}
\end{equation}
and conclude using Lemma~\ref{lem:subtlecomps} that the right side tends to 1 as $m$ tends to infinity. A similar computation holds for the second term on the right side of~\eqref{eq:claimproof:nu_mconverge2Aeq} using an analogous version of Lemma~\ref{lem:subtlecomps}.

To prove~\eqref{claim:nu_mconverge2B}, we expand out the definitions of $\mathtt{R}_m,\mathtt{S}_m, \mu_m$, and $\nu_m$ which gives 
\begin{equation}
\begin{split}
&\mathbb{E}_{\mathrm{Az}}[\mathtt{R}_m-\mathtt{S}_m+t(\mathtt{R}'_m-\mathtt{S}_m)] =\frac{1}{M} \sum_{p=1}^{L_2} \sum_{q=1}^{L_1} \sum_{k=1}^M  (u_{p,q}+t v_{p,q}) \\
&\times\mathbb{E}_{\mathrm{Az}} \big[ h^a(J_{p,q,k,1}^r) -h^a(J_{p,q,1,k}^r)\big]-\mathbb{E}_{\mathrm{Az}} \big[ h^a(J_{p,q,k,1}^l) -h^a(J_{p,q,1,k}^r) \big].
\end{split}
\end{equation}
We only focus on the first expectation on the right side; the second is analagous. The expectation of  height differences is the signed sum of dimer probabililites, which can be evaluated by Theorem~\ref{localstatisticsthm} using the asymptotic entries of $K^{-1}_{a,1}$ at the rough-smooth boundary.  As the distance between $J_{p,q,k,1}^r$ and $J_{p,q,1,k}^r$ is atmost $CM(\log m)^2$ which bounds the number of dimer probabilities involved, the contributions from $\mathbb{K}_A$ are negligible as $m\to \infty$, see Theorem~\ref{Airyasymptotics}, and so only contributions from the $\mathbb{K}^{-1}_{1,1}$ are relevant. This means we have
\begin{equation}
\begin{split}
&\mathbb{E}_{\mathrm{Az}} \big[ h^a(J_{p,q,k,1}^r) -h^a(J_{p,q,1,k}^r)\big]\\
&=\mathbb{E}_{\mathrm{sm}} \big[ h^a(J_{p,q,k,1}^r) -h^a(J_{p,q,k,k}^r)\big]- \mathbb{E}_{\mathrm{sm}} \big[h^a(J_{p,q,k,1}^r) -h^a(J_{p,q,k,k}^r) \big]+o(1)=o(1)
\end{split}
\end{equation}
as $m \to \infty$, where we have used the fact that the smooth phase is flat (so the expected height change between $a$-faces in directions parallel to $e_1$ or $e_2$ is zero - we omit the computation).
\end{proof}

\end{proof}
}

\section{Proofs of Proposition~\ref{prop:couple} and Proposition~\ref{prop:couple2}}\label{section:couple}

\begin{proof}[Proof of Proposition~\ref{prop:couple}]
	In the proof below, we write  $	p_{\mathrm{Az}}(\cdot)=p_{\mathrm{Az}}(\cdot| \Lambda^{(u,v)}_L)$, $p_{\mathrm{sm}}(\cdot)=p_{\mathrm{sm}}(\cdot |\Lambda^{(u,v)}_L)$. 

	Let $1 \leq i_1' < \dots < i_{R-r}' \leq R$ and $1 \leq j_1' < \dots < j_{R-r}' \leq R$ where $1 \leq r \leq R$ be given.  Observe that, 
\begin{equation}
	\begin{split}
		&| \det (A_{i_p',j_q'}(\overline{s}))_{1 \leq p,q \leq R-r} | = |\det(K_{a,1}(w_{i_l'},w_{i_p'}) \mathbb{K}_{1,1}^{-1} (w_{i_p'},w_{j_q'}+f_{s_{j_q'}}))_{1 \leq p,q \leq R-r}| \\
		&= \mathbb{P}_{\mathrm{sm}} [ \mbox{all edges }(w_{i_p'},w_{j_p'}+f_{s_{j_p'}}), 1 \leq p \leq R-r \mbox{ are covered}]
	\end{split}
\end{equation}
Consequently,
\begin{equation}
	\begin{split}
		&\sum_{\overline{s} \in [4]^R} | \det (A_{i_p',j_q'}(\overline{s}))_{1 \leq p,q \leq R-r} | \\
		&= 4^r \sum_{s_{j_1'},\dots,s_{j_{R-r}'} \in [4]} \mathbb{P}_{\mathrm{sm}} [ \mbox{edges }(w_{i_p'},w_{j_p'}+f_{s_{j_p'}}), 1 \leq p \leq r \mbox{ are covered}] \leq 4^r \label{gascouple:six}
	\end{split}
\end{equation}
since all the events in the sum are disjoint, they give different dimer configurations.

	Write 
	\begin{equation}
		A(\overline{s}) = ( A_{ij}(\overline{s}) )_{1 \leq i,j\leq R} =  (\overline{A}_1(\overline{s}) \dots  \overline{A}_R(\overline{s}))
	\end{equation}
where 
	\begin{equation}\label{gascouple:expandC1}
		\overline{A}_j(\overline{s})= \left( \begin{array}{c}
			A_{1j}(\overline{s}) \\ 
			\vdots \\
		A_{Rj}(\overline{s}) \\  \end{array} \right)
	\end{equation}
	and similarly for $C(\overline{s})$. Let $\overline{e}_i$ be the standard basis column vectors, $1 \leq i \leq R$, so that 
	\begin{equation}\label{gascouple:expandC}
		\overline{C}_j(\overline{s}) = \sum_{i=1}^R C_{ij}(\overline{s}) \overline{e}_i.
	\end{equation}
	Then, 
	\begin{equation}
		\begin{split}
			&|\det (A_{ij}(\overline{s})+m^{-1/3} C_{ij}(\overline{s}))_{1\leq i,j \leq R} - \det (A_{ij}(\overline{s}))_{1\leq i,j \leq R}|\\ 
			&\leq \sum_{r=1}^R \sum_{1\leq j_1 < \dots < j_r \leq R} \frac{1}{m^{r/3}} |\det (\overline{C}_{j_1}(\overline{s}) \dots \overline{C}_{j_r}(\overline{s})\overline{A}_{j_1'}(\overline{s})\dots \overline{A}_{j_{R-r}'}(\overline{s}) )|  \label{gascouple:nine}
		\end{split}
	\end{equation}
	where $[R] \backslash \{j_1, \dots, j_r \} = \{ j_1' < \dots< j_{R-r}' \}$.  
	Now, by~\eqref{gascouple:expandC}, 

	\begin{equation}
	\begin{split}
		&|\det(\overline{C}_{j_1}(\overline{s}) \dots \overline{C}_{j_r}(\overline{s})\overline{A}_{j_1'}(\overline{s})\dots \overline{A}_{j_{R-r}'}(\overline{s}) ) \\
		&= \left|\sum_{i_1,\dots, i_r=1}^R C_{i_1j_1}(\overline{s})\dots C_{i_rj_r}(\overline{s}) \det (\overline{e}_{i_1} \dots \overline{e}_{i_r} \overline{A}_{j_1'} (\overline{s}) \dots \overline{A}_{j_{R-r}'} (\overline{s}) )\right| \\
		&\leq C_0^r\sum_{i_1,\dots, i_r=1}^R |\det (\overline{e}_{i_1} \dots \overline{e}_{i_r} \overline{A}_{j_1'} (\overline{s}) \dots \overline{A}_{j_{R-r}'} (\overline{s}))|, \label{gascouple:ten}
	\end{split}
	\end{equation}
	by~\eqref{constC0}.  Note that 
	\begin{equation}
|\det (\overline{e}_{i_1} \dots \overline{e}_{i_r} \overline{A}_{j_1'} (\overline{s}) \dots \overline{A}_{j_{R-r}'} (\overline{s}))|=0
	\end{equation}
	if $i_p = i_q$ for some $p \not = q$. Thus, 
	\begin{equation}
		\begin{split}
			&\sum_{i_1,\dots, i_r=1}^R | \det (\overline{e}_{i_1} \dots \overline{e}_{i_r} \overline{A}_{j_1'} (\overline{s}) \dots \overline{A}_{j_{R-r}'} (\overline{s}))| \\
			&= r! \sum_{1\leq i_1<\dots< i_r<R} | \det (\overline{e}_{i_1} \dots \overline{e}_{i_r} \overline{A}_{j_1'} (\overline{s}) \dots \overline{A}_{j_{R-r}'} (\overline{s}))|\\
			&=r! \sum_{1\leq i_1<\dots< i_r<R} | \det(A_{i_p'j_q'}(\overline{s}))_{1 \leq p,q \leq R-r}| \\ \label{gascouple:twelve}
		\end{split}
	\end{equation}
	which can be seen by expanding the determinant along the first $r$ columns, where $[R] \backslash \{i_1,\dots, i_r \} = \{i_1'<\dots<  i_{R-r}' \}$. Combining~\eqref{gascouple:nine},~\eqref{gascouple:ten} and~\eqref{gascouple:twelve}, we see that 
	\begin{equation}
		\begin{split}
			&|\det (A_{ij}(\overline{s})+m^{-1/3} C_{ij}(\overline{s}))_{1 \leq i,j \leq R} - \det (A_{ij}(\overline{s}))_{1 \leq i,j \leq R}|\\
			&\leq \sum_{r=1}^R \left( \frac{C_0}{m^{1/3}} \right)^r r! \sum_{1 \leq i_1 < \dots < i_r \leq R} \sum_{1 \leq j_1 < \dots < j_r \leq R} | \det (A_{i_p'j_q'} (\overline{s}))_{1 \leq p,q \leq R-r}|. \label{gascouple:thirteen}
		\end{split}
	\end{equation}
Thus by~\eqref{gascouple:pAz},~\eqref{gascouple:S} and~\eqref{gascouple:thirteen}
	\begin{equation}
		\begin{split}
			&\sum_{\overline{s} \in \Omega } \big|p_{\mathrm{Az}} \big(\overline{s}|\Lambda_L^{(x,y)}\big) - p_{\mathrm{sm}}\big(\overline{s}|\Lambda_L^{(x,y)}\big)\big|\\
			&= \sum_{\overline{s} \in \Omega } |\det (A_{ij}(\overline{s})+m^{-1/3} C_{ij}(\overline{s}))_{1\leq i,j\leq R} - \det (A_{ij}(\overline{s}))_{1\leq i,j\leq R}| \\ 
			&\leq \sum_{r=1}^R \left( \frac{C_0}{m^{1/3}} \right)^r r! \sum_{\substack{ 1 \leq i_1 < \dots i_r \leq R  \\ 1 \leq j_1 < \dots< j_r \leq R  }} \sum_{\overline{s} \in \Omega } | \det (A_{i_p'j_q'} (\overline{s}))|_{1\leq p, q \leq R-r} \\
			& \leq \sum_{r=1}^R \left( \frac{C_0}{m^{1/3}} \right)^r r!\sum_{\substack{ 1 \leq i_1 < \dots i_r \leq R  \\ 1 \leq j_1 < \dots< j_r \leq R  }} 4^r,
		\end{split}
	\end{equation}
where we also used~\eqref{gascouple:six} in the last inequality.  Thus, using $r! \left( \begin{array}{cc} R \\ r \end{array}\right) \leq R^r$, we have
	\begin{equation}
		\begin{split}
			&\sum_{\overline{s} \in \Omega } |p_{\mathrm{Az}} (\overline{s}) - p_{\mathrm{sm}}(\overline{s}) | \leq \sum_{r=1}^R \left( \frac{4C_0}{m^{1/3}} \right)^r r! \left( \begin{array}{cc} R \\ r \end{array}\right)^2 \\
				&\leq \sum_{r=1}^R \left( \frac{4C_0R}{m^{1/3}} \right)^r  \left( \begin{array}{cc} R \\ r \end{array}\right) 
					= \left(1+4C_0Rm^{-1/3} \right)^R -1\\& = e^{R \log (1+4C_0Rm^{-1/3})}-1 \leq e^{4C_0R^2m^{-1/3}}-1 \leq 4C_0R^2 m^{-1/3}e
		\end{split}
	\end{equation}
	provided that $4 C_0 R^2 m^{-1/3} \leq 1$, as required.  
\end{proof}

We need the following lemma whose proof is in the Appendix~\ref{appendix}.
\begin{lemma} \label{anothersmoothbound}
	For $1 \leq i \leq R$ and $R+1 \leq j \leq 2R$ or $1 \leq j \leq R$ and $R+1 \leq i \leq 2R$ with $R<\lambda_1^2(\log m)^4/4+ \lambda_1 (\log m)^2$, there exists constants $c_0,D>0$ such that 
		\begin{equation}
|F_{ij}(\overline{s})| \leq  De^{-c_0(\log m)^2}
		\end{equation}
	\end{lemma}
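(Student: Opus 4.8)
The plan is to exhibit a uniform exponential decay for the full-plane smooth-phase inverse Kasteleyn matrix $\mathbb{K}^{-1}_{1,1}(w_i, w_j+f_{s_j})$ when $w_i$ and $w_j+f_{s_j}$ lie in two boxes $\Lambda^1$ and $\Lambda^2$ whose centers are separated by a distance of order $\lambda_1(\log m)^2$, and then observe that this is exactly what $F_{ij}(\overline s)$ records. Concretely, for $1\le i\le R < j\le 2R$ (and symmetrically for the other case) we have $F_{ij}(\overline s) = D_{ij}(\overline s)-E_{ij}(\overline s)$, but by the definition of $E$ in this off-diagonal regime $E_{ij}(\overline s)=D_{ij}(\overline s)$, so strictly $F_{ij}(\overline s)=0$; thus the statement is really the (vacuous-looking) bound that $0 \le De^{-c_0(\log m)^2}$, unless the intended reading is that $E$ sets the off-diagonal block to zero and $F_{ij}=D_{ij}$ is the quantity to be bounded. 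Under that reading — which is the one consistent with the use of Lemma~\ref{anothersmoothbound} in Proposition~\ref{prop:couple2} — the task is to bound $|D_{ij}(\overline s)| = |K_{a,1}(w_i,w_i)\,\mathbb{K}^{-1}_{1,1}(w_i, w_j+f_{s_j})|$ when $|w_i - w_j|\gtrsim \lambda_1(\log m)^2$.

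First I would recall the integral representation \eqref{smoothphaseeqn} for $\mathbb{K}^{-1}_{1,1}(x,y)$, a double contour integral over $\Gamma_1\times\Gamma_1$ of a ratio whose denominator contains $\tilde c(u_1,u_2)u_1^{(x_1-y_1+1)/2}u_2^{(x_2-y_2+1)/2}$. The key structural fact about the smooth phase is that the characteristic-polynomial-type factor $\tilde c(u_1,u_2) = 2(1+a^2)+a(u_1+u_1^{-1})(u_2+u_2^{-1})$ has \emph{no zeros on the unit torus} $|u_1|=|u_2|=1$ — this is precisely the gaseous/smooth condition — so there is an annulus $1-\delta \le |u_1|,|u_2|\le 1+\delta$ (with $\delta>0$ depending only on $a$) on which $\tilde c$ stays bounded away from $0$. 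I would then deform each of the two contours: push $\Gamma_1$ in $u_1$ to radius $1+\delta\,\mathrm{sgn}(x_1-y_1)$ (i.e. outward if the exponent of $u_1$ is negative, inward if positive) and similarly for $u_2$, picking up no poles because the only singularities of the integrand in the annulus are at $u_k=0$, which is not crossed, and the numerator is entire. Each deformation gains a factor bounded by $(1+\delta)^{-|x_1-y_1|/2}$ and $(1+\delta)^{-|x_2-y_2|/2}$ respectively, while the remaining integral is $O(1)$ uniformly by the bound on $1/\tilde c$ and compactness of the shifted contours. Since $w_i\in\Lambda^1$, $w_j+f_{s_j}\in\Lambda^2$ and these boxes are centered at $J^r_{p,q,k_1,1}$ and $J^r_{p,q,k_2,1}$ with $k_1\ne k_2$, the separation in the relevant coordinate is at least of order $\lambda_1(\log m)^2 - 2L \gtrsim \lambda_1(\log m)^2$ (using $R<\lambda_1^2(\log m)^4/2+2\lambda_1(\log m)^2$ to control the box size $L$), so $(1+\delta)^{-c(\log m)^2} = e^{-c_0(\log m)^2}$ for a suitable $c_0>0$; absorbing the bounded prefactor $|K_{a,1}(w_i,w_i)|$ and the $O(1)$ residual integral into the constant $D$ finishes the estimate.

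The main obstacle I anticipate is bookkeeping the geometry carefully enough to be sure the coordinate separation genuinely grows like $(\log m)^2$ in at least one of the $u_1$- or $u_2$-exponents: the faces $J^r_{p,q,k,1}$ defined in \eqref{eq:facesJr} differ in the $e_1$-direction by multiples of $\lambda_1(\log m)^2$, so one must translate this through the $e_1,e_2$ basis into a genuine lower bound on $|x_1-y_1|+|x_2-y_2|$, and handle the small perturbations coming from $f_{s_j}\in\{\pm e_1,\pm e_2\}$ and from the floor functions — all of which are $O(L)=O(M(\log m)^2)$, hence negligible against the $\lambda_1(\log m)^2$ gap only if $M$ grows slowly enough, which is guaranteed by the standing assumption $M^4(\log m)^8/m^{1/3}\to 0$ and the constraint on $R$. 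A secondary, more technical point is that the deformation must be done simultaneously in both variables with signs chosen according to the signs of the two exponents; since $\tilde c$ is a genuine two-variable polynomial one should check that the intermediate contour configurations (e.g. $|u_1|=1+\delta$, $|u_2|=1$) also avoid the zero set, which again follows from the smooth-phase non-degeneracy of $\tilde c$ on a full neighborhood of the torus. Once these geometric and analytic checks are in place the exponential bound is immediate.
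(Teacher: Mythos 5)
Your reading of the definitional glitch is right: as written, $E_{ij}$ is nonzero precisely on the off-diagonal blocks, so $F_{ij}=D_{ij}-E_{ij}$ would vanish there; the intended definition (forced by the factorization $\det E = p_{\mathrm{sm}}(\cdot|\Lambda^1)p_{\mathrm{sm}}(\cdot|\Lambda^2)$ in Proposition~\ref{prop:couple2}) is that $E$ keeps the diagonal blocks, so that on the off-diagonal blocks $F_{ij}=D_{ij}=K_{a,1}(w_i,w_i)\mathbb{K}^{-1}_{1,1}(w_i,w_j+f_{s_j})$ is exactly what must be bounded, and that is what you bound. Your route, however, differs from the paper's. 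The paper does not deform contours from scratch: it rewrites $\mathbb{K}^{-1}_{1,1}$ via \eqref{GasE} as a combination of two coefficients $E_{k,\ell}$ of $1/\tilde c$ and invokes the sharp estimate of Lemma~\ref{lem:Eformula} (Lemma 4.7 of \cite{CJ:16}), $|E_{B_m+A_m,B_m-A_m}|\le C b_m^{-1/2}\mathcal{C}^{2b_m}(\cdots)$ with $\mathcal{C}<1$, then checks that since the two box centers differ by a multiple of $e_1$ of size $2\lambda_1|k_1-k_2|(\log m)^2$ and $L<\lambda_1(\log m)^2$, one has $b_m=|A_m|\gtrsim(\log m)^2$. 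Your contour-deformation argument re-proves the needed exponential off-diagonal decay of the smooth-phase kernel directly from the non-vanishing of $\tilde c$ near the unit torus; it is more self-contained but yields a non-optimal rate $(1+\delta)^{-\mathrm{dist}}$, which is entirely sufficient here. Two small points of care. First, your statement that the only singularities in the annulus are at $u_k=0$ is imprecise: writing $1/\tilde c=u_1u_2/P(u_1,u_2)$ with $P$ a polynomial, the relevant singularities are the zeros of $P$, and what saves you is that $P$ has no zeros on the compact torus $|u_1|=|u_2|=1$ (the smooth-phase condition), hence none in a product of thin annuli; your parenthetical remark about checking intermediate contours is exactly the needed observation. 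Second, your separation bound ``$\lambda_1(\log m)^2-2L$'' in a single coordinate is not quite right and could even be negative; the correct bookkeeping uses that each $\Lambda^i$ is an $\ell^1$-ball of radius $L-1$, so $|x_1-y_1|+|x_2-y_2|\ge 4\lambda_1|k_1-k_2|(\log m)^2-2(L-1)-O(1)\gtrsim 2\lambda_1(\log m)^2$, whence at least one coordinate difference is $\gtrsim\lambda_1(\log m)^2$ and you deform the corresponding contour. With these repairs your argument is complete and equivalent in strength to the paper's.
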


\begin{proof}[Proof of Proposition~\ref{prop:couple2}]
	The computation is very similar to the one for Proposition~\ref{prop:couple} and so we give a shortened computation. We have that 
	\begin{equation}
		\begin{split}
			&	|\det (D_{ij}(\overline{s}))_{1\leq i,j \leq 2R} - \det (E_{ij}(\overline{s}))_{1\leq i,j \leq 2R}|\\ 
			&\leq \sum_{r=1}^R \sum_{1\leq j_1 < \dots < j_r \leq 2R}  |\det (\overline{F}_{j_1}(\overline{s}) \dots \overline{F}_{j_r}(\overline{s})\overline{E}_{j_1'}(\overline{s})\dots \overline{E}_{j_{2R-r}'}(\overline{s}) )|  
		\end{split}
	\end{equation}
	where $[2R] \backslash \{j_1, \dots, j_r \} = \{ j_1' < \dots< j_{2R-r}' \}$ and we use the same notation as given in~\eqref{gascouple:expandC1}.  Using the notation given in~\eqref{gascouple:expandC} and following the same steps given in Proposition~\ref{prop:couple2}, we have that the left side of the above equation is bounded above by 
	\begin{equation}
		\begin{split}
			&\sum_{r=1}^R \sum_{1\leq j_1 < \dots < j_r \leq 2R}  \left|\sum_{i_1, \dots, i_r=1}^{2R} F_{i_1j_1}(\overline{s}) \dots F_{i_rj_r}(\overline{s})  \det (\overline{e}_{i_1} \dots \overline{e}_{i_r} \overline{E}_{j_1'} (\overline{s}) \dots \overline{E}_{j_{2R-r}'} (\overline{s}) )\right| \\
			&=\sum_{r=1}^R r! \sum_{\substack{ 1\leq j_1 < \dots < j_r \leq 2R \\ 1\leq i_1 < \dots < i_r \leq 2R}} 	\left| F_{i_1j_1}(\overline{s}) \dots F_{i_rj_r}(\overline{s}) \det(E_{i_p'j_q'}(\overline{s}))_{1 \leq p,q \leq 2R-r} \right|
		\end{split}
	\end{equation}
	by the same argument given in~\eqref{gascouple:twelve}. We use the bound from Lemma~\ref{anothersmoothbound} for each $F_{i_lj_l}$ $1 \leq l \leq r$ to get
	\begin{equation}
		\begin{split}
			&\sum_{\overline{s} \in \Omega} |\det (D_{ij}(\overline{s}))_{1\leq i,j \leq 2R} - \det (E_{ij}(\overline{s}))_{1\leq i,j \leq 2R}|\\ 
			& \leq  \sum_{r=1}^{2R} D^r e^{-r c_0 (\log m)^2 }  \sum_{\substack{ 1\leq j_1 < \dots < j_r \leq 2R \\ 1\leq i_1 < \dots < i_r \leq 2R}} \sum_{\overline{s} \in \Omega} \left|\det(E_{i_p'j_q'}(\overline{s}))_{1 \leq p,q \leq 2R-r} \right| \\
			&\leq  16 R^2 D e^{-c_0(\log m)^2}
\end{split}
	\end{equation}
by following the same steps given in the last two equations in the proof of Proposition~\ref{prop:couple2}. 

\end{proof}

\section{Geometry of the full-plane smooth phase} \label{sec:geometry} 

In this section, we introduce directed random spanning trees and give three differently weighted graphs $\mathbb{L}_R$, $\mathbb{L}_R^{\mathtt{w}}$ and $\mathbb{L}_R^{\mathtt{f}}$, which are equivalent in dimer model measure.  We give the explicit gauge transformations between the measures. We show that the dimer model on $\mathbb{L}_R$ converges weakly to the full-plane smooth phase when $R \to \infty$.    Using this and extending the notion of corridors to the full-plane smooth phase, we show that there is only one corridor almost surely. 

\subsection{Directed spanning tree}\label{subsec:spanning}

In this subsection, among introducing directed spanning trees, we also give  three different weightings for a dimer model (which will eventually be shown to be \emph{gauge equivalent}) and describe the spanning tree correspondence for two of these weightings.

Consider a finite connected directed graph embedded in the plane. Assign weights to each directed edge of the graph. Note that the weight of the edge from $u$ to $v$ is not necessarily equal to the weight of the edge from $v$ to $u$.   A \emph{directed spanning tree} with root $\mathbf{r}$  (also known as an \emph{arborescence}) $T$ is a connected union of edges of $G$ such that each vertex of the graph has exactly one outgoing edge in $T$ except for the root $\mathbf{r}$ which has only incoming edges.  The weight of a directed spanning tree $T$ is the product of the weights of the directed edges of $T$. The random directed spanning tree is a probability measure on the set of directed spanning trees with the probability of picking a directed spanning tree being proportional to the weight of the directed spanning tree.  Random spanning tree is a rich subject but we will not review this here;~\cite{BLPS:01}.

Temperley~\cite{Tem:74} found a  bijection between random spanning tree of an $n \times m$ rectangle in $\mathbb{Z}^2$ and dimer covers on $(2m-1) \times (2n-1)$ with a corner vertex removed. This bijection was generalized in~\cite{KPW:00}, providing a bijection between directed weighted spanning trees on a connected planar graph and dimer coverings on a related graph.  Rather than describe this bijection in its full setting, we restrict to the setting relevant for this paper.

Introduce a bipartite graph (for the dimer model) which has white vertices given by
\[
	\begin{split}
		\bar{\mathtt{W}}&= \{ (2i+1-2R , 2j+2-2R): 0 \leq i \leq 2R-2, 0\leq j \leq 2R-2 \}  \\
		&\cup\{ (4i+1-2R , -2R): 1 \leq i \leq R-1 \}\\
		& \cup\{ (-1-2R , 4j+2-2R): 0 \leq j \leq R-1 \}  \\
		&\cup\{ (4i+1-2R , 2R): 0 \leq i \leq R-1 \}\\
		& \cup\{ (2R-1 , 4j+2-2R): 0 \leq j \leq R-1 \}  \\
	\end{split}
\]
and black vertices given by 
\[
	\bar{\mathtt{B}}= \{ (2i-2R , 2j+1-2R): 0 \leq i \leq 2R-1, 0\leq j \leq 2R-1 \}  \\
	\]
where $R >1$.  The edges between the white and black vertices are parallel to $e_1=(1,1)$ and $e_2=(-1,1)$; see Fig~\ref{fig:graphLR}. 
\begin{figure}
\begin{center}
\includegraphics[height=5cm]{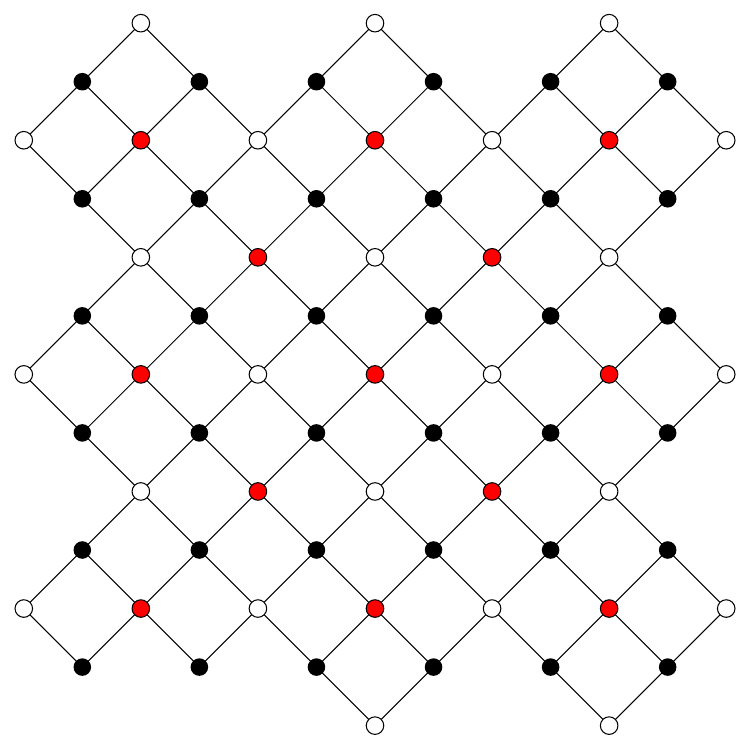}
\includegraphics[height=5cm]{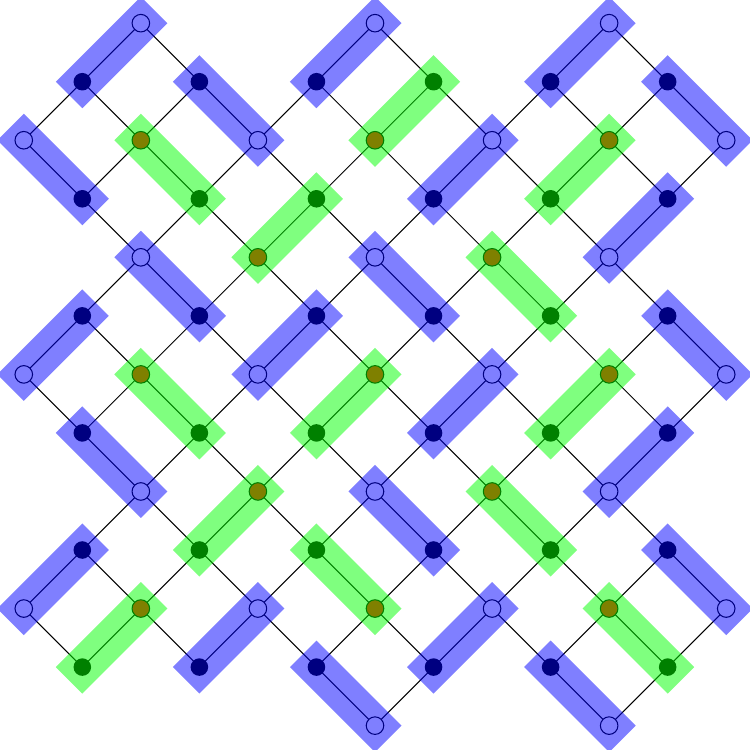}
	\caption{The left figure shows the graph $\mathbb{L}_R$ for $R=3$. The graphs $\mathbb{L}_R^{\mathtt{w}}$ and $\mathbb{L}_R^{\mathtt{w}}$ have the same vertex and edge sets as $\mathbb{L}_R$, but different edgeweights.  The vertices in  $\overline{\mathtt{W}}_1$ are colored in red while the vertices in  $\overline{\mathtt{W}}_0$ are colored in white. The right figure shows a dimer covering on $\overline{\mathbb{L}}_R$ with the dimers responsible for the tree on  $\mathbb{T}_R^{\mathtt{w},p}$ (and  $\mathbb{T}_R^{\mathtt{f},d}$) colored green and the dimers responsible for the tree on  $\mathbb{T}_R^{\mathtt{w},d}$ (and  $\mathbb{T}_R^{\mathtt{f},p}$) colored blue.} 
\label{fig:graphLR}
\end{center}
\end{figure}
As before, we have the same convention of $\bar{\mathtt{W}}_0,\bar{\mathtt{W}}_1, \bar{\mathtt{B}}_0$ and $\bar{\mathtt{B}}_1$, that is 
\begin{equation}
	\bar{\mathtt{W}}_i= \{(x,y) \in \bar{\mathtt{W}}: x+y \mod4=2i+1\} \hspace{5mm}\mbox{for } i \in\{0,1\}
\end{equation}
and
\begin{equation}
	\bar{\mathtt{B}}_i= \{(x,y) \in \bar{\mathtt{B}}: x+y \mod4=2i+1\} \hspace{5mm}\mbox{for } i \in\{0,1\}.
\end{equation}
We introduce three different weightings for this bipartite graph and label them accordingly. For $j,k \in \{0,1\}$, $i \in \{1,2\}$, and $w \in \overline{\mathtt{W}}_j$, if the edges $(w,w+(-1)^k e_i)$ have weight
\begin{itemize}
\item $a^{(1-j)(1-k)+kj}$, then label the graph $\mathbb{L}_R$;
\item $a^{2kj}$, then label the graph $\mathbb{L}_R^{\mathtt{w}}$;
\item $a^{2(1-k)(1-j)}$, then label the graph $\mathbb{L}_R^{\mathtt{f}}$,
\end{itemize}
that is the graph labels above are  sets of vertices, edges, as well as their edge weights. 
The first weighting above is the two-periodic weighting for this graph, the second has its edge weights that are not equal to one on edges incident to vertices in $\mathtt{W}_1$ while the third has its  edge weights that are not equal to one on edges incidenct to vertices in $\mathtt{W}_0$.   

Recall that dimer model is uniquely parameterized by its \emph{face weights}, that is the measure is uniquely determined by the alternating product of the edge weights around each face. It is easy to see that the dimer models on $\mathbb{L}_R$, $\mathbb{L}_R^{\mathtt{w}}$ and $\mathbb{L}_R^{\mathtt{f}}$ have the same face weights and hence the measures are the same, that is, they are gauge equivalent. We show below the explicit \emph{gauge transformations} between the dimer models.

We now describe the tree correspondence for the dimer model on $\mathbb{L}_R^\mathtt{w}$. We use the same convention as above that the graph label includes the graph's vertices, edges as well as the edge weights. 
 The graph for the \emph{primal} directed spanning tree, $\mathbb{T}_R^{\mathtt{w},p}$, has vertex set given by $\bar{\mathtt{W}}_1$ while the graph of the \emph{dual} directed spanning tree, $\mathbb{T}_R^{\mathtt{w},d}$, has  vertex set given by $\bar{\mathtt{W}}_0 \cup(-1-2R,-2R)$.  The edges in $\mathbb{T}_R^{\mathtt{w},d}$ and $\mathbb{T}_R^{\mathtt{w},p}$ are parallel to $\pm e_1$ and $\pm e_2$. For each dimer $(w,w\pm e_i)$ with $i \in \{1,2\}$  and $w \in \bar{\mathtt{W}}_1$, there is a directed edge in the directed spanning tree from $w$ to $w\pm2 e_i$ with the directed edge having the same weight as its corresponding dimer.  That is, the directed edges of $\mathbb{T}_R^{\mathtt{w},d}$ of the form $(v,v+2(-1)^ke_i)$ have weights $a^{2k}$ for $i \in \{1,2\}$ and $k \in \{0,1\}$.   The same correspondence holds for dimers incident to vertices in $\bar{\mathtt{W}}_0$ but these give the dual directed spanning tree and so all directed edges in $\mathbb{T}_R^{\mathtt{w},d}$ have weight 1. The choice in boundary conditions of $\mathbb{L}_R^{\mathtt{w}}$ means that all boundary vertices $\mathbb{T}_R^{\mathtt{w},p}$ are connected to a single vertex, that is, a \emph{wired directed spanning tree}. The dual spanning tree is rooted at the vertex $(-1-2R,-2R)$.  
It is immediate that once the primal tree has been found, the dual tree is fully determined and deterministic.  Moreover, the above correspondence between dimers to directed edges can be simply reversed, so that given a primal tree with the above weights, the dual tree and the resulting dimer configuration are completely determined, with each dimer configuration having weight given by the product of its edge weights. As a consequence, each pair of directed spanning trees in the above construction only depends on the  primal directed spanning tree, and so it follows that the dimer model $\mathbb{L}_R^{\mathtt{w}}$ is equivalent to the primal random directed spanning tree $\mathbb{T}_R^{\mathtt{w},p}$.   

Next we describe the tree correspondence for the dimer model on $\mathbb{L}_R^\mathtt{f}$. This time, the graph for the \emph{primal} directed spanning tree, $\mathbb{T}_R^{\mathtt{f},p}$, has vertex set given by $\bar{\mathtt{W}}_0\cup(-1-2R,-2R)$ while the graph of the \emph{dual} directed spanning tree, $\mathbb{T}_R^{\mathtt{f},d}$, has  vertex set given by $\bar{\mathtt{W}}_1$. The same correspondence between dimers and edges in the tree given in the correspondence on $\mathbb{L}_R^{\mathtt{w}}$ holds in this case.  Here, the primal tree $\mathbb{T}_R^{\mathtt{f},p}$ is rooted at $(-1-2R,-2R)$, the dual tree is wired, and the dimer configuration on $\mathbb{L}_R^{\mathtt{f}}$ is completely determined by the primal tree on $\mathbb{T}_R^{\mathtt{f},p}$.

As noted above for the Aztec diamond, there is a height function defined on faces $\mathbb{L}_R$ in one-to-one correspondence (up to height level) and dimer configurations on $\mathbb{L}_R$. Due to the bijection between dimers on $\mathbb{L}_R^\mathtt{w}$ and trees on $\mathbb{T}^{\mathtt{w},p}_R$, the height function is in correspondence with trees on $\mathbb{T}^{\mathtt{w},p}_R$~\cite{KPW:00}.   { In particular, each directed edge on $\mathbb{T}^{\mathtt{w},p}_R$ corresponds to two incident edges (which are in the same direction) on $\mathbb{L}_R^{\mathtt{w}}$, exactly one of which is covered by a dimer.  Due to the correspondence between trees, dimers and heights, there are four heights around each directed edge on $\mathbb{T}^{\mathtt{w},p}_R$~\cite{KPW:00} (since there are four faces incident to each directed edge).  }  
  The main observation we need from~\cite{KPW:00} is that two directed edges of the same type are only able to join the same tree if after unwinding\footnote{the winding number is defined as the number of right turns minus left turns},  their heights match.

Random directed drifted spanning tree can be generated using Wilson's algorithm~\cite{Wil:96}, which gives a convenient tool for infinite limits.  Wilson's algorithm is briefly described as follows: define the loop erasure of a finite path $\mathcal{P}$, denoted by $\mathrm{LE}(\mathcal{P})$ to be the path after chronologically removing the loops of $\mathcal{P}$. This is well-defined when $\mathcal{P}$ does not visit any vertex infinitely often.  Consider any ordering of the vertices $\{v_1, \dots ,v_{4R^2}\}$ of $\mathbb{T}_R^{\mathtt{w},p}$ and set $\mathcal{F}_0=\emptyset$.  Let $\mathcal{P}_i$ denote the path  generated by a random walk with weights $(1,1,a^{2},a^{2})$ in the directions $(e_1,e_2,-e_1,-e_2)$ started from $v_i$ which terminates if it exits $\mathbb{T}_R^{\mathtt{w},p}$ (i.e. it hits the single vertex connected to all the boundary vertices of $\mathbb{T}_R^{\mathtt{w},p}$) or hits $\mathcal{F}_{i-1}$ (if $v_i \in\mathcal{F}_{i-1}$, then the random walk has already hit $\mathcal{F}_{i-1}$). Then set $\mathcal{F}_i=\mathcal{F}_{i-1} \cup \mathrm{LE}(\mathcal{P}_i)$. The tree $\mathcal{F}_{4R^2}$ is a random drifted directed spanning tree. Note that the distribution of the tree is independent on the choice of ordering of the vertices~\cite{Wil:96}.

Finally, we mention that we denote the infinite graph of $\mathbb{T}_R^{\mathtt{w},p}$, that is in the limit as $R \to \infty$, by $\mathbb{T}^{\mathtt{w},p}$.  For the wired directed spanning tree on $\mathbb{T}_R^{\mathtt{w},p}$ one can take the limit as $R \to \infty$ without considering weak limits using \emph{Wilson's algorithm rooted at infinity}~\cite{BLPS:01} giving a wired directed spanning forest on $\mathbb{T}^{\mathtt{w},p}$~\cite{BLPS:01}, where the underlying directed edges have weights $a^{2k}$ for $(v,v+2(-1)^ke_i)$ for $k \in \{0,1\}$ and $i\in\{1,2\}$ and $v \in \mathtt{W}_1^*$. Indeed, the algorithm relies on the underlying random walk to be transient, which is the case for this directed spanning tree, and can be described as follows: let $\mathcal{F}_0=\emptyset $ and let $v_1, v_2\dots$ be an enumeration of the vertices in ${\mathtt{W}}_1^*$. Inductively, pick a vertex $v_n$ and run the drifted random walk from $v_n$. Stop the walk when it hits $\mathcal{F}_{n-1}$, otherwise let it run indefinitely.  Call this walk $\mathcal{P}_n$.  Set $\mathcal{F}_n=\mathcal{F}_{n-1} \cup \mathcal{P}_n$ and $\mathcal{F}=\bigcup_n \mathcal{F}_n$. Then, from~\cite{BLPS:01}, $\mathcal{F}$ has the same distribution as the wired directed forest on $\mathbb{T}^{\mathtt{w},p}$.  Moreover, we have the following.

\begin{prop} \label{prop:tree}
	The wired directed spanning forest on $\mathbb{T}^{\mathtt{w},p}$ is a single tree almost surely. 
\end{prop}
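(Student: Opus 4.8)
The plan is to run Wilson's algorithm rooted at infinity, exactly as set up above, and to reduce the statement to an intersection property of the underlying drifted random walk. Recall that the wired directed spanning forest $\mathcal{F}$ on $\mathbb{T}^{\mathtt{w},p}$ has more than one tree precisely when, at some stage of Wilson's algorithm, a loop-erased walk escapes to infinity without ever meeting the part of the forest built so far; since the walk has positive drift it is transient, so each loop-erased walk is a.s.\ infinite. Thus it suffices to prove that for every pair of vertices $u,v\in\mathtt{W}_1^*$, two independent copies of the directed random walk --- with weights $(1,1,a^2,a^2)$ in the directions $(e_1,e_2,-e_1,-e_2)$, i.e.\ with steps $\pm2e_1,\pm2e_2$ carrying these weights --- started from $u$ and from $v$, have traces that intersect almost surely. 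This is the standard argument for connectedness of a wired uniform spanning forest via Wilson's algorithm, in its directed version (see~\cite{BLPS:01},~\cite{Wil:96}); the only genuinely non-formal point in it is that the second walk must meet the loop-\emph{erasure} of the first, not merely its trace, and this is handled using the fact that the edges of $\mathbb{T}^{\mathtt{w},p}$ join consecutive levels $\{y=h\}$, so that any connected path from a finite level to infinity visits every level above its starting level, together with the strengthening established below that the two walks meet at infinitely many levels.

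To prove the intersection property I would first diagonalise the walk. For a single step the $y$-coordinate changes by $+2$ with probability $\tfrac1{1+a^2}$ and by $-2$ with probability $\tfrac{a^2}{1+a^2}$, while conditionally on the sign of this change the $x$-coordinate is equally likely to change by $+2$ or by $-2$; hence $(y_n)$ is a nearest-neighbour walk on a coset of $2\mathbb{Z}$ with strictly positive drift, $(x_n)$ is an \emph{independent} simple random walk on a coset of $2\mathbb{Z}$, and both stay on the residues fixed by $x+y\equiv3\pmod4$. In particular $(y_n)$ is transient to $+\infty$ and a.s.\ visits every level above its start. Writing $\tau_h$ for the first time the walk reaches level $h$ and $X_h:=x_{\tau_h}$, the strong Markov property together with translation invariance show that the increments $X_{h+2}-X_h$ (for $h$ at least the level of $u$) are i.i.d.; each is a sum of $N_h:=\tau_{h+2}-\tau_h$ i.i.d.\ fair $\pm2$ steps, and $N_h$ is a first-passage time of a positively drifted walk, hence has exponential tails. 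So each increment is symmetric with mean zero and finite variance, and $(X_h)_h$ is a mean-zero, finite-variance random walk.

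Applying this to two independent walks $X^{(1)},X^{(2)}$ from $u$ and $v$: for every level $h$ above both starting levels, $D_h:=X^{(1)}_h-X^{(2)}_h$ is again a random walk with i.i.d., mean-zero, symmetric increments, living on $4\mathbb{Z}$ --- since at level $h$ one has $X^{(1)}_h\equiv X^{(2)}_h\equiv 3-h\pmod 4$ --- and irreducible there, because its increments take the values $0$ and $\pm4$ with positive probability. By the Chung--Fuchs theorem a mean-zero one-dimensional random walk is recurrent, so $D_h=0$ for infinitely many $h$ almost surely; at each such level both walks occupy the site $(X^{(1)}_h,h)$, so their traces intersect --- at infinitely many levels --- almost surely, which is exactly what the reduction in the first paragraph requires.

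The hard part is the reduction itself rather than the recurrence argument: one must check that the Wilson's-algorithm characterisation of connectedness of the wired spanning forest goes through for this non-reversible, drifted chain, and in particular that replacing the raw trace of the first walk by its loop-erasure costs nothing --- which is precisely why the layered geometry of $\mathbb{T}^{\mathtt{w},p}$ and the ``infinitely many levels'' form of the intersection statement have been arranged above. Once that is in place, the remaining content is the soft, genuinely one-dimensional recurrence statement made available by the ballistic behaviour in the $y$-direction.
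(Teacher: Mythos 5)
Your overall strategy coincides with the paper's: both arguments reduce connectedness of the wired directed spanning forest on $\mathbb{T}^{\mathtt{w},p}$, via Wilson's algorithm rooted at infinity, to the statement that two independent copies of the drifted walk started from arbitrary vertices of $\mathtt{W}_1^*$ have intersecting traces almost surely. The difference lies in how the two halves of that reduction are discharged. The paper simply cites \cite[Theorem 1.3]{SSSWX:18} for the intersection property and \cite{LPS:03} (see also \cite[Theorem 10.22]{LP:16}) for the upgrade from trace-intersection to intersection with the loop-erasure. You instead prove the intersection property from scratch, and your argument is correct and nicely elementary: the $x$- and $y$-components of a single step are genuinely independent (check: $\mathbb{P}(x\text{-step}=+2,\,y\text{-step}=+2)=\tfrac{1}{2(1+a^2)}=\mathbb{P}(x\text{-step}=+2)\mathbb{P}(y\text{-step}=+2)$), the $y$-component is ballistic, the inter-level first-passage counts have exponential tails, and the difference of the two walks' $x$-coordinates sampled at first passage to level $h$ is a mean-zero, finite-variance, irreducible walk on $4\mathbb{Z}$ (each per-two-level $x$-increment lies in $2+4\mathbb{Z}$, being a sum of an odd number of $\pm2$'s), so Chung--Fuchs gives $D_h=0$ infinitely often. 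This buys a self-contained proof of exactly the input the paper outsources, and it delivers the intersection at infinitely many levels, which is the right form of the hypothesis for the next step.

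That next step --- passing from ``the second walk meets the \emph{trace} of the first infinitely often'' to ``the second walk meets the \emph{loop-erasure} of the first'' --- is the one place where your write-up is thinner than the paper's. Your layered-geometry remark does not close it: the loop-erasure does visit every level above its start, but it may cross level $h$ at a different $x$-coordinate from the point where the two raw traces meet, so meeting the trace at infinitely many levels does not by itself force a meeting with the loop-erasure. This implication is a genuine theorem for transient Markov chains; it is precisely what \cite{LPS:03} proves and is why the paper cites that reference. With that result invoked in place of your sketch, your proof is complete.
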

The original statement for uniform spanning trees was due to Pemantle~\cite{Pem:91}. The above result follows from the formulation in~\cite{BLPS:01}. Indeed, one only needs to show that two independent drifted random walks intersect with probability one when started from two different points in $\mathbb{Z}^2$~\cite{LPS:03}; see for example~\cite[Theorem 10.22]{LP:16}.  This is shown in~\cite[Theorem 1.3]{SSSWX:18}, so the proof of the result is complete.

\subsection{Gauge Transformation}

The act of multiplying all the edges incident to a vertex of a graph by a constant is called a \emph{gauge transformation}. This transformation does not change the dimer model measure.  We consider each of the three dimer models defined in Section~\ref{subsec:spanning} and give the explicit gauge transformations.

\begin{prop}\label{prop:gaugeequivalent}
The gauge transformation to get from the dimer model on $\mathbb{L}_R$ to the dimer model on $\mathbb{L}_R^{\mathtt{w}}$ is given by
\begin{itemize}
	\item muliplying each vertex $x=(x_1,x_2) \in \bar{\mathtt{W}}_j$ with $j\in \{0,1\}$ by $a^{j+\frac{1}{2}(x_2-2+2R)}$,
\item muliplying each vertex $y=(y_1,y_2) \in \bar{\mathtt{B}}$ by $a^{-\frac{1}{2}(y_2-1+2R)}$.
\end{itemize}
The gauge transformation to get from the dimer model on $\mathbb{L}_R$ to the dimer model on $\mathbb{L}_R^{\mathtt{f}}$ is given by
\begin{itemize}
	\item muliplying each vertex $x=(x_1,x_2) \in \bar{\mathtt{W}}_j$ with $j\in \{0,1\}$ by $a^{-j-\frac{1}{2}(x_2-2+2R)}$,
\item muliplying each vertex $y=(y_1,y_2) \in \bar{\mathtt{B}}$ by $a^{\frac{1}{2}(y_2-1+2R)}$.
\end{itemize}
\end{prop}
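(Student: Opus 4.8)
The plan is a direct verification: one checks that the prescribed vertex multipliers turn the edge weights of $\mathbb{L}_R$ into those of $\mathbb{L}_R^{\mathtt{w}}$ (respectively $\mathbb{L}_R^{\mathtt{f}}$), together with the standard fact that such a multiplication leaves the dimer measure unchanged. For the latter, if $\{g_v\}$ denotes the collection of nonzero multipliers then for any perfect matching $M$ the transformed weight equals $\prod_{(u,v)\in M} g_u g_v w(u,v)=\big(\prod_v g_v\big)\prod_{(u,v)\in M} w(u,v)$, because every vertex is covered exactly once; hence all configuration weights are scaled by the same constant and the normalized probability measure is unaffected. I would also note at the outset that the multipliers are well defined: every white vertex of $\bar{\mathtt{W}}$ has even second coordinate and every black vertex of $\bar{\mathtt{B}}$ has odd second coordinate, so $\tfrac12(x_2-2+2R)$ and $\tfrac12(y_2-1+2R)$ are integers and the multipliers are genuine (integer) powers of $a$.

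The computation itself rests on one geometric observation. If $w=(w_1,w_2)\in\bar{\mathtt{W}}_j$ and $b=w+(-1)^k e_i$ is the incident black vertex in direction $i\in\{1,2\}$ with sign $k\in\{0,1\}$, then since both $e_1=(1,1)$ and $e_2=(-1,1)$ have second coordinate $1$, one has $b_2=w_2+(-1)^k$ irrespective of $i$. Writing $g^{\mathtt{w}}_W(x)=a^{\,j+\frac12(x_2-2+2R)}$ for $x\in\bar{\mathtt{W}}_j$ and $g^{\mathtt{w}}_B(y)=a^{-\frac12(y_2-1+2R)}$ for $y\in\bar{\mathtt{B}}$, the factor multiplying the edge $(w,b)$ is therefore
\[
g^{\mathtt{w}}_W(w)\,g^{\mathtt{w}}_B(b)=a^{\,j+\frac12(w_2-2+2R)-\frac12\left(w_2+(-1)^k-1+2R\right)}=a^{\,j-1+k},
\]
the $w_2$'s and the $2R$'s cancelling. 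Multiplying the $\mathbb{L}_R$ weight $a^{(1-j)(1-k)+kj}$ by $a^{\,j-1+k}$ and simplifying the exponent $(1-j)(1-k)+kj+j-1+k=2kj$ yields exactly the weight $a^{2kj}$ of $\mathbb{L}_R^{\mathtt{w}}$, which proves the first half.

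For $\mathbb{L}_R^{\mathtt{f}}$ the multipliers are the reciprocals of the ones above, so the edge factor becomes $a^{-(j-1+k)}=a^{1-j-k}$; multiplying the $\mathbb{L}_R$ weight by this and simplifying $(1-j)(1-k)+kj+1-j-k=2(1-k)(1-j)$ gives the weight $a^{2(1-k)(1-j)}$ of $\mathbb{L}_R^{\mathtt{f}}$. I do not expect any genuine obstacle in this argument; the only point requiring care is the uniform treatment of the four edge types $(i,k)$, which is exactly what the observation $b_2=w_2+(-1)^k$ handles — once it is recorded, both halves reduce to one-line exponent computations. (Alternatively, one could argue abstractly that $\mathbb{L}_R$, $\mathbb{L}_R^{\mathtt{w}}$ and $\mathbb{L}_R^{\mathtt{f}}$ share the same alternating product of edge weights around every face and are therefore gauge equivalent, and then recover the explicit multipliers by solving the resulting linear system in the exponents; but the direct check above is the most transparent way to produce the stated formulas.)
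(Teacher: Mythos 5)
Your proposal is correct and follows essentially the same route as the paper: a direct verification that the product of the white and black vertex multipliers along the edge $(w,w+(-1)^k e_i)$ equals $a^{j-1+k}$ (respectively its reciprocal), which converts the $\mathbb{L}_R$ weight $a^{(1-j)(1-k)+kj}$ into $a^{2kj}$ (respectively $a^{2(1-k)(1-j)}$). The extra remarks on integrality of the exponents and on why a gauge transformation preserves the measure are harmless additions; the exponent arithmetic matches the paper's.
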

\begin{proof}

We apply the first gauge transformation to $\mathbb{L}_R$. By doing so, around $x=(x_1,x_2) \in \bar{\mathtt{W}}_j$  the edges $(x,x+(-1)^k e_i)$ for $k \in \{0,1\}$, $i \in \{1,2\}$ have weight 
$$
a^{(1-j)(1-k)+kj}a^{j+\frac{1}{2}(x_2-2+2R)}a^{-\frac{1}{2}(x_2+1-2k-1+2R)},
$$	
where the first factor is the weight of the edge while the second and third factors are from the multiplications assigned to the white and black vertices respectively.  
Simplifying the above formula gives
	$$
a^{(1-j)(1-k)+kj+j+k-1}=a^{2kj},
$$
which are the edge weights of $\mathbb{L}_R^{\mathtt{w}}$.  

Next, we apply the second gauge transformation to $\mathbb{L}_R$.  Then, around $x=(x_1,x_2) \in \bar{\mathtt{W}}_j$  the edges $(x,x+(-1)^k e_i)$ for $k \in \{0,1\}$, $i \in \{1,2\}$ have weight 
$$
a^{(1-j)(1-k)+kj}a^{-j-\frac{1}{2}(x_2-2+2R)}a^{\frac{1}{2}(x_2+1-2k-1+2R)},
$$
where the first factor is the weight of the edge while the second and third factors are from the multiplications assigned to the white and black vertices respectively.  Simplifying the above formula gives
$$
	a^{(1-j)(1-k)+kj-j-k+1}=a^{2(1-j)(1-k)  }
$$
which are the edge weights of $\mathbb{L}_R^{\mathtt{f}}$.

\end{proof}
\begin{remark}
{As a consequence of Proposition~\ref{prop:gaugeequivalent},}
the dimer model on $\mathbb{L}_R$ is equivalent to the directed random spanning tree on $\mathbb{T}_R^{\mathtt{w},p}$ and to the directed random spanning tree on $\mathbb{T}_R^{\mathtt{f},p}$.
\end{remark}

\subsection{Convergence to the full-plane smooth phase}

The Kasteleyn matrix on $\mathbb{L}_R$ reads for $(x,y) \in \bar{\mathtt{B}} \times \bar{\mathtt{W}}$
\begin{equation} \label{pf:KLambda}
       K(x,y)=\left\{\begin{array}{ll}
		     a (1-j) + b j  & \mbox{if } y=x+e_1, x \in \bar{\mathtt{B}}_j \\
		     (a j +b (1-j) ) \mathrm{i} & \mbox{if } y=x+e_2, x \in \bar{\mathtt{B}}_j\\
		     a j + b (1-j)  & \mbox{if } y=x-e_1, x \in \bar{\mathtt{B}}_j \\
		     (a (1-j) +b j ) \mathrm{i} & \mbox{if } y=x-e_2, x \in \bar{\mathtt{B}}_j\\
			0 & \mbox{if $(x,y)$ is not an edge}.
		     \end{array} \right.
\end{equation}
The following proposition shows that entries of $K^{-1}$ converge to their full-plane smooth phase counterpart which indicates that as $R\to \infty$, the dimer model on $\mathbb{L}_R$ converges weakly to the full-plane smooth phase.  

\begin{prop}\label{prop:fintetoinfinite}
	For ${x}\in \bar{\mathtt{W}}_{\varepsilon_1}$, ${y}\in \bar{\mathtt{B}}_{\varepsilon_2}$ fixed in terms of $R$,  with $\eps_1,\eps_2 \in \{0,1\}$ and all $a \in (0,1)$  we have
\begin{equation}
|K^{-1}(x,y)- \mathbb{K}_{1,1}^{-1}(x,y) | \leq CRe^{-c_0R}
\end{equation}
where $c_0, C>0$ are constants.
\end{prop}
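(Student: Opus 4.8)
The plan is to compare the finite-graph inverse Kasteleyn matrix $K^{-1}$ on $\mathbb{L}_R$ with the full-plane smooth phase inverse $\mathbb{K}_{1,1}^{-1}$ by exploiting the spanning-tree correspondence established in Section~\ref{subsec:spanning}. First I would recall that $K^{-1}(x,y)$ has a probabilistic meaning: up to the deterministic factor $K(x,x)$, the quantity $K(x,x)K^{-1}(x,y)$ is (by Theorem~\ref{localstatisticsthm} with $r=1$) the probability that the edge $(x,y)$ is covered by a dimer, which in turn, under the gauge equivalence of Proposition~\ref{prop:gaugeequivalent} and the tree correspondence for $\mathbb{L}_R^{\mathtt{w}}$, is expressible in terms of the drifted random walk / loop-erased walk used in Wilson's algorithm. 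Concretely, whether a dimer sits on a given edge is determined by whether a particular directed edge belongs to the primal tree $\mathbb{T}_R^{\mathtt{w},p}$, and the probability of that event is, via Wilson's algorithm, a functional of the drifted random walk with step weights $(1,1,a^2,a^2)$ started near the relevant vertices. The same functional, taken on the full plane $\mathbb{Z}^2$ (where the drifted walk is transient, as noted after Proposition~\ref{prop:tree}), yields $\mathbb{K}_{1,1}^{-1}(x,y)$.

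The key step is then to bound the difference between the finite-$R$ and infinite-volume versions of this random-walk functional. The drifted random walk has a genuine drift (the weights $a^2<1$ in the $-e_1,-e_2$ directions bias the walk in the $+e_1,+e_2$ directions), so it has positive speed and exponentially small probability of travelling a macroscopic distance against the drift. Hence, for $x,y$ fixed as $R\to\infty$, the probability that the loop-erased walk from a vertex near $x$ ever reaches within distance $cR$ of the boundary of $\mathbb{L}_R^{\mathtt{w}}$ before being absorbed into the already-built forest is at most $Ce^{-c_0R}$; on the complementary event, the finite and infinite Wilson's-algorithm constructions can be coupled to agree on the relevant edges. Summing over the $O(R^2)$ boundary vertices (or more efficiently, using that the walk must cross an annulus of width $\sim R$) produces the stated bound $CRe^{-c_0R}$. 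Translating back through $K(x,x)^{-1}$ and the explicit gauge factors of Proposition~\ref{prop:gaugeequivalent} — which are bounded uniformly for $x,y$ fixed — gives the claimed estimate on $|K^{-1}(x,y)-\mathbb{K}_{1,1}^{-1}(x,y)|$.

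An alternative, more analytic route would be to write both $K^{-1}(x,y)$ and $\mathbb{K}_{1,1}^{-1}(x,y)$ as contour integrals. The full-plane expression is the double contour integral \eqref{smoothphaseeqn}; on the finite graph $\mathbb{L}_R$ one can diagonalize $K$ using the eigenfunctions compatible with the boundary conditions (the boundary structure of $\mathbb{L}_R$ is chosen precisely so that this is tractable — it is the Kenyon--Propp--Wilson graph associated to a spanning tree on a square), obtaining a finite sum which is a Riemann-sum-type approximation to \eqref{smoothphaseeqn}. The difference is then controlled by the fact that the characteristic polynomial $\tilde{c}(u_1,u_2)$ in \eqref{ctilde} does not vanish on the unit torus (this is exactly the statement that the phase is smooth/gaseous), so the integrand is analytic in a neighbourhood of $\Gamma_1\times\Gamma_1$ and one can deform contours and estimate the correction by a steepest-descent / geometric-decay argument, again yielding exponential decay in $R$ with a polynomial prefactor from the number of lattice frequencies.

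I expect the main obstacle to be the bookkeeping at the boundary: making precise the coupling between Wilson's algorithm on $\mathbb{L}_R^{\mathtt{w}}$ (a \emph{wired} tree with a specific, somewhat irregular boundary specified by $\bar{\mathtt{W}}$) and Wilson's algorithm rooted at infinity on $\mathbb{T}^{\mathtt{w},p}$, and verifying that the ``bad'' event — the loop-erased walk reaching the boundary — indeed has probability $\le Ce^{-c_0R}$ uniformly. This requires a quantitative transience/large-deviations estimate for the drifted walk: the probability that a drifted random walk started at a bounded point, or the loop-erasure of such a walk, exits a box of radius $R$ is exponentially small in $R$. Such an estimate is standard (Hoeffding/Azuma applied to the drift coordinate), but one must be careful that loop-erasure does not create long excursions — here one uses that $\mathrm{LE}(\mathcal{P})$ is a subset of the trajectory of $\mathcal{P}$, so any vertex visited by the loop-erased path was visited by the original walk, and the exponential bound on the range of the original drifted walk transfers directly. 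Once this estimate is in hand, the rest is routine, and the gauge factors from Proposition~\ref{prop:gaugeequivalent} only contribute bounded multiplicative constants since $x$ and $y$ are held fixed.
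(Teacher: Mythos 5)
There is a genuine gap, and it sits in the estimate your whole first route rests on. You claim that ``the probability that a drifted random walk started at a bounded point, or the loop-erasure of such a walk, exits a box of radius $R$ is exponentially small in $R$.'' This is backwards: the walk with weights $(1,1,a^2,a^2)$ in the directions $(e_1,e_2,-e_1,-e_2)$ has a drift in the $+e_1+e_2$ direction, so it exits the box almost surely and typically in time $O(R)$; in the wired construction on $\mathbb{T}_R^{\mathtt{w},p}$ \emph{every} branch of the tree terminates at the boundary root (the first walk in Wilson's algorithm has no pre-existing forest to be absorbed into), so your ``bad event'' has probability $1$, not $Ce^{-c_0R}$. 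The correct coupling statement is about stabilization of the loop-erased path \emph{inside a fixed window}: what is exponentially small is the probability that the walk, after leaving a large box, returns to the window, and it is this return probability (plus the exponential decay of the discrepancy between exiting the finite box and running forever) that lets the finite and infinite constructions agree locally. Even with that repaired, two further points are assumed rather than proved: (i) that the infinite-volume Wilson functional is given by the explicit contour-integral kernel $\mathbb{K}^{-1}_{1,1}$ of \eqref{smoothphaseeqn} --- this identification is exactly the content of the paper's Lemma~\ref{Claim1} and is not free; and (ii) the proposition concerns $K^{-1}(x,y)$ for arbitrary fixed, not necessarily adjacent, $x$ and $y$, which is (up to sign) a partition-function ratio $Z_{\mathbb{L}_R\setminus\{x,y\}}/Z_{\mathbb{L}_R}$ rather than the probability of a cylinder event, so total-variation control of local edge configurations does not directly bound it.

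For comparison, the paper's proof is neither of your two routes. It writes the exact identity $\Delta_a K^{-1}=K^*$ in generating-function form, which splits $G^{1,0}$ into $d^1_{10}/\tilde c$ --- whose coefficient extraction gives \emph{exactly} $\mathbb{K}^{-1}_{1,1}(x,y)$ (Lemma~\ref{Claim1}) --- plus boundary generating functions $d^2_{10}/\tilde c$ supported on $x_1=1-2R$, etc. The boundary terms are then killed by contour deformation using that $\tilde c$ has no zeros near the unit torus, \emph{provided} one has the a priori bound $|K^{-1}(x,\tilde y)|\le C$ uniformly for $x$ on the boundary of $\mathbb{L}_R$ (Claim~\ref{claim:bound}); that bound is where the spanning-tree correspondence and the gauge transformations of Proposition~\ref{prop:gaugeequivalent} actually enter, via a comparison of partition functions with vertices removed. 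So the tree structure is used for a deterministic partition-function inequality, not for a probabilistic coupling; your sketch omits this a priori boundary bound, which is the real content of the exponential estimate.
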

A few remarks are in order.

\begin{remark}
	\begin{enumerate}
		\item Although ${x}\in \bar{\mathtt{W}}_{\varepsilon_1}$, ${y}\in \bar{\mathtt{B}}_{\varepsilon_2}$ in the above proposition, the choice in coordinate system for the graph $\mathbb{L}_R$ has the same parity as the Aztec diamond and the full-plane as well. 
\item We expect that the bound in Proposition~\ref{prop:fintetoinfinite} could be sharpened to $e^{-c_0R}$, by using more precise estimates, e.g.those estimates from~\cite[Section 4]{CJ:16}.  However, this requires a much more delicate computation than the one given here.  
	\end{enumerate}
\end{remark}

The approach taken is partly based from a computation in~\cite{CY:13} with a useful simplification valid for this setting. The above result and the local statistics theorem, Theorem~\ref{localstatisticsthm}, guarantees the measure on $\mathbb{L}_R$ converges weakly as $R \to \infty$ to the full-plane smooth phase measure.   Below, we use that $x=(x_1,x_2)$ and $y=(y_1,y_2)$ without further mention.  

We let for $i,j \in \{0,1\}$
\begin{equation}
	G^{i,j} = G^{i,j}(w_1,w_2,b_1,b_2) = \sum_{{x \in \bar{\mathtt{W}}_i}}\sum_{ y \in \bar{ \mathtt{B}}_j} K^{-1} (x,y) w_1^{x_1}w_2^{x_2} b_1^{y_1} b_2^{y_2},
\end{equation}
that is, the generating function of the inverse Kasteleyn matrix on $\mathbb{L}_R$ with the variables $(w_1,w_2)$ marking the white vertex coordinate and variables $(b_1,b_2)$ marking the black vertex coordinate.  We also need restrictions on the generating function.  Here, we will abuse notation and denote 
\begin{equation}
	G^{i,j}\bigg|_{\substack{x \in A \\ y \in B}}=\sum_{{x \in \bar{\mathtt{W}}_i}}\sum_{ y \in \bar{ \mathtt{B}}_j} K^{-1} (x,y) w_1^{x_1}w_2^{x_2} b_1^{y_1} b_2^{y_2} \mathbb{I}_{x \in A}\mathbb{I}_{y \in B}.
\end{equation}
We will also use the notation that $f_r(w)=(1-w^r)/(1-w)$.

\begin{proof}[Proof of Proposition~\ref{prop:fintetoinfinite}]
We give the computation in full for vertices in $\bar{\mathtt{W}}_1 \times \bar{\mathtt{B}}_0$ and the other computations follow from the same method.  For space reasons, we omit these additional computations but highlight the main differences.

Consider the matrix $\Delta_a= K^* K$, where $K^*$ is the conjugate transpose of $K$. For $x=(x_1,x_2) \in \bar{\mathtt{W}}_1$ and $y=(y_1,y_2) \in \bar{\mathtt{B}}_0$ and since $K K^{-1}=\mathbb{I}$, we have
\begin{equation}
\Delta_a K^{-1} (x,y)= \sum_{b \in \bar{\mathtt{B}}} \sum_{w \in \bar{\mathtt{W}}} K^*(x,b) K(b,w) K^{-1}(w,y)=\sum_{b \sim x }K^*(x,b)\mathbb{I}_{b=y}
\end{equation} 
where $\sum_{b \sim x }$ denotes the sum over vertices $b$ that are nearest neighbored vertices to $x$.  Notice that we can instead expand out $K^* K$ first in $\Delta_a K^{-1} (x,y)$ which gives 
\begin{equation}
\begin{split} \label{eq:gfwhite}
&\sum_{b \sim x }K^*(x,b)\mathbb{I}_{b=y}=\Delta_a K^{-1} (x,y)= a \bigg( K^{-1}(x+2e_1,y) \mathbb{I}_{x_1<2R-3} \mathbb{I}_{x_2<2R-2} \\
&+K^{-1}(x+2e_2,y) \mathbb{I}_{x_1>1-2R} \mathbb{I}_{x_2<2R-2}  
+K^{-1}(x-2e_1,y) \mathbb{I}_{x_1>1-2R} \mathbb{I}_{x_2>2-2R} \\
&+K^{-1}(x-2e_2,y) \mathbb{I}_{x_1<2R-3} \mathbb{I}_{x_2>2-2R}  \bigg)
	+2(1+a^2) K^{-1}(x,y) \hspace{7mm}\mbox{for } x \in \bar{\mathtt{W}}_1.
\end{split}
\end{equation}
Here, the indicator functions keep track of the boundary of the box. 
We multiply the above equation by $w_1^{x_1}w_2^{x_2}b_1^{y_1}b_2^{y_2}$ and sum over $x \in \bar{\mathtt{W}}_1$ and $y\in \bar{\mathtt{B}}_0$, simplifying each term into generating function formulas. For example, under this procedure we have
\begin{equation}
\begin{split}
	&\sum_{ x \in \bar{\mathtt{W}}_1} \sum_{ y \in \bar{\mathtt{B}}_0} K^{-1} (x+2 e_1,y) \mathbb{I}_{x_1<2R-3} \mathbb{I}_{x_2<2R-2} w_1^{x_1}w_2^{x_2}b_1^{y_1}b_2^{y_2} 
	=\frac{1}{w_1^2 w_2^2}\sum_{ x \in \bar{\mathtt{W}}_1} \sum_{ y \in \bar{\mathtt{B}}_0} \\ & \times\big(
1- \mathbb{I}_{x_1=1-2R} -\mathbb{I}_{x_2=2-2R} +\mathbb{I}_{x=(1-2R,2-2R)} 
\big)K^{-1}(x,y)  w_1^{x_1}w_2^{x_2}b_1^{y_1}b_2^{y_2}\\
	&=\frac{1}{w_1^2 w_2^2}\big( G^{1,0}- G^{1,0}\big|_{ x_1=1-2R  }- G^{1,0}\big|_{ x_2=2-2R}+ G^{1,0}\big|_{ x=(1-2R,2-2R)} \big) \\
	&=\frac{1}{w_1^2 w_2^2}\big( G^{1,0}- G^{1,0}\big|_{ x_1=1-2R  }- G^{1,0}\big|_{ \substack{ x_2=2-2R \\ x_1 \not=1-2R}} \big) .
\end{split}
\end{equation}
By applying this procedure to all terms in~\eqref{eq:gfwhite}, and after collecting terms we arrive at
\begin{equation} \label{eq:gfwhite2}
\begin{split}
	& \big( a (w_1^{-2}+w_1^2)(w_2^{-2}+w_2^2)+2(1+a^2) \big) G^{1,0}\\
	&-a ( w_2^{-2}+w_2^2) \big( w_1^{-2}G^{1,0}\big|_{x_1=1-2R } +w_1^{2}G^{1,0}\big|_{x_1=2R-3 } \big) \\ 
	&-a ( w_1^{-2}+w_1^2) \bigg( w_2^{-2}G^{1,0}\bigg|_{\substack{ x_2=2-2R \\ x_1 \not = 1-2R,2R-3}} +w_2^{2}G^{1,0}\bigg|_{\substack{ x_2=2R-2 \\ x_1\not = 1-2R,2R-3}} \bigg) \\ 
	&= \sum_{ x \in \bar{\mathtt{W}}_1}\sum_{ y \in \bar{\mathtt{B}}_0}\bigg(\sum_{b \sim x }K^*(x,b)\mathbb{I}_{b=y} \bigg)  w_1^{x_1} w_2^{x_2} b_1^{y_1} b_2^{y_2} 
\end{split}
\end{equation} 
Notice that the first term on the left side in the above expression is $\tilde{c}(w_1^2,w_2^2)G^{1,0}$.  
We set 
\begin{equation}
	d^1_{10}(w_1,w_2,b_1,b_2)=\sum_{ x \in \bar{\mathtt{W}}_1} \sum_{ y \in \bar{\mathtt{B}}_0}\bigg(\sum_{b \sim x }K^*(x,b)\mathbb{I}_{b=y} \bigg)  w_1^{x_1} w_2^{x_2} b_1^{y_1} b_2^{y_2},
\end{equation}
which is the right side of~\eqref{eq:gfwhite2}.  For $d^1_{10}(w_1,w_2,b_1,b_2)$, 
we expand out the right side of the above equation by using the definition of $K^*$, use the indicator function and the fact that the black vertices are in $\bar{\mathtt{B}}_0$. This gives
\begin{equation}
\begin{split}
&d^1_{10}(w_1,w_2,b_1,b_2)\\
&=\sum_{ x \in \bar{\mathtt{W}}_1} \sum_{ y \in \bar{\mathtt{B}}_0}
(\mathbb{I}_{x+e_1=y} -\mathrm{i} \mathbb{I}_{x+e_2=y}+a \mathbb{I}_{x-e_1=y}-a \mathrm{i} \mathbb{I}_{x-e_2=y}) w_1^{x_1} w_2^{x_2} b_1^{y_1} b_2^{y_2} \\
&=\sum_{ x \in \bar{\mathtt{W}}_1} \sum_{ y \in \bar{\mathtt{B}}_0}(\mathbb{I}_{x+e_1=y} +a \mathbb{I}_{x-e_1=y}) w_1^{x_1} w_2^{x_2} b_1^{y_1} b_2^{y_2} \\
&=\sum_{ x \in \bar{\mathtt{W}}_1} w_1^{x_1} w_2^{x_2} b_1^{x_1+1} b_2^{x_2+1}+ a w_1^{x_1} w_2^{x_2} b_1^{x_1-1} b_2^{x_2-1} \\  
&= \frac{( b_1 b_2 +a b_1^{-1} b_2^{-1})}{ w_1^{2R} w_2^{2R} b_1^{2R} b_2^{2R}}\\
&\times \bigg( w_1b_1 w_2^2 b_2^2 f_R(w_1^4 b_1^4) f_R(w_2^4 b_2^4) +w_1^3b_1^3 w_2^4 b_2^4 f_{R-1}(w_1^4 b_1^4) f_{R-1}(w_2^4 b_2^4)   \bigg) ,
\end{split}
\end{equation}
	where the two terms in parenthesis in the above equation are from vertices in $\overline{\mathtt{W}}_1$ whose coordinates are either of the form $(4i+1-2R, 4j-2R+2)$ with $0 \leq i ,j \leq  R-1$ or of the form $(4i+3-2R, 4j+4-2R)$ with $0 \leq i,j \leq  R-2$.
We also set
\begin{equation}
\begin{split}
&d^2_{10}(w_1,w_2,b_1,b_2)= 
	a ( w_2^{-2}+w_2^2) \big( w_1^{-2}G^{1,0}\big|_{ x_1=1-2R } +w_1^{2}G^{1,0}\big|_{ x_1=2R-3} \big) \\ 
	&+a ( w_1^{-2}+w_1^2) \bigg( w_2^{-2}G^{1,0}\bigg|_{\substack{   x_2=2-2R \\x_1 \not = 1-2R,2R-3}} +w_2^{2}G^{1,0}\bigg|_{\substack{  x_2=2R-2 \\ x_1\not = 1-2R,2R-3}} \bigg) .\\ 
\end{split}
\end{equation}
Then, we have that~\eqref{eq:gfwhite2} can be rewritten as 
\begin{equation}\label{eq:gfwhite3}
\begin{split}
	G^{1,0} =\frac{d^1_{10}}{\tilde{c}(w_1^2,w_2^2)}   + \frac{d^2_{10}}{\tilde{c}(w_1^2,w_2^2)}.
\end{split}
\end{equation}
Extracting coefficients of the $G^{1,0}$ in the above equation gives formulas for $K^{-1}(x,y)$.    We consider, for $\varepsilon>0$
\begin{equation}
	\frac{1}{(2\pi \mathrm{i})^4} \int_{\Gamma_{1-\varepsilon}}\frac{d w_1}{w_1} \int_{\Gamma_{1-\varepsilon}}\frac{d w_2}{w_2} \int_{\Gamma_{1-\varepsilon}}\frac{d b_1}{b_1} \int_{\Gamma_{1-\varepsilon}}\frac{d b_2}{b_2}  \frac{	G^{1,0}}{w_1^{\tilde{x}_1} w_2^{\tilde{x}_2} b_1^{\tilde{y}_1}b_2^{\tilde{y}_2}}
\end{equation} 
for each term in~\eqref{eq:gfwhite3}. These are given in the following two lemmas whose proofs are postponed until after completing the proof of the proposition.

\begin{lemma} \label{Claim1}
For $x \in \bar{\mathtt{W}}_1$ and $y \in \bar{\mathtt{B}}_0$, 
\begin{equation}
\begin{split}
&\frac{1}{(2\pi \mathrm{i})^4} \int_{\Gamma_{1-\varepsilon}}\frac{d w_1}{w_1} \int_{\Gamma_{1-\varepsilon}}\frac{d w_2}{w_2} \int_{\Gamma_{1-\varepsilon}}\frac{d b_1}{b_1} \int_{\Gamma_{1-\varepsilon}}\frac{d b_2}{b_2}\\ &\times \frac{ d_{10}^1(w_1,w_2,b_1,b_2)  }{ w_1^{x_1} w_2^{x_2} b_1^{y_1} b_2^{y_2}\tilde{c}(w_1^2,w_2^2)}= \mathbb{K}^{-1}_{1,1}(x,y)
\end{split}
\end{equation}
\end{lemma}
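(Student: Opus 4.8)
The plan is to substitute the explicit formula for $d^1_{10}$, carry out the $b_1$- and $b_2$-contour integrals first, and then recognise the remaining double integral in $(w_1,w_2)$ as the integral representation \eqref{smoothphaseeqn} of the smooth-phase kernel $\mathbb{K}^{-1}_{1,1}(x,y)$. Performing the $b$-integrals first is legitimate because $\tilde{c}(w_1^2,w_2^2)$ does not involve $b_1,b_2$ while $d^1_{10}$ is a Laurent polynomial in $b_1,b_2$; hence the $b_1$- and $b_2$-integrals merely extract the coefficient of $b_1^{y_1}b_2^{y_2}$ and are independent of the radius $1-\varepsilon$.

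First I would use the identity for $d^1_{10}$ already derived above, which as a Laurent polynomial reads $d^1_{10}=\sum_{x\in\bar{\mathtt{W}}_1}w_1^{x_1}w_2^{x_2}\bigl(b_1^{x_1+1}b_2^{x_2+1}+a\,b_1^{x_1-1}b_2^{x_2-1}\bigr)$, and which rests on the elementary fact that every $x\in\bar{\mathtt{W}}_1$ has both neighbours $x\pm e_1$ in $\bar{\mathtt{B}}$. Extracting the coefficient of $b_1^{y_1}b_2^{y_2}$ then yields $w_1^{y_1-1}w_2^{y_2-1}\,\mathbb{I}_{y-e_1\in\bar{\mathtt{W}}}+a\,w_1^{y_1+1}w_2^{y_2+1}\,\mathbb{I}_{y+e_1\in\bar{\mathtt{W}}}$, where $y\pm e_1=(y_1\pm1,y_2\pm1)$, and the parity of $y\in\bar{\mathtt{B}}_0$ forces $y\pm e_1\in\bar{\mathtt{W}}_1$ whenever they lie in $\bar{\mathtt{W}}$. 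Since $x$ and $y$ are fixed while $R\to\infty$, for $R$ large both $y-e_1$ and $y+e_1$ are vertices of $\bar{\mathtt{W}}$, so the coefficient equals $w_1^{y_1-1}w_2^{y_2-1}(1+a w_1^2 w_2^2)$ and the four-fold integral collapses to
\[
\frac{1}{(2\pi\mathrm{i})^2}\int_{\Gamma_{1-\varepsilon}}\int_{\Gamma_{1-\varepsilon}}\frac{(1+a w_1^2 w_2^2)\,dw_1\,dw_2}{w_1^{x_1-y_1+2}\,w_2^{x_2-y_2+2}\,\tilde{c}(w_1^2,w_2^2)}.
\]

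Next I would deform both contours from $\Gamma_{1-\varepsilon}$ to $\Gamma_1$. This is permissible because for $a\in(0,1)$ one has $|\tilde{c}(u_1,u_2)|\ge 2(1+a^2)-4a=2(1-a)^2>0$ on $|u_1|=|u_2|=1$, so by continuity $\tilde{c}(w_1^2,w_2^2)\ne0$ on $1-\varepsilon\le|w_1|,|w_2|\le1$ once $\varepsilon$ is small, while the only other singularity of the integrand in each variable is at the origin, which is not crossed. Finally I would compare with \eqref{smoothphaseeqn}: specialising to $\eps_x=1$, $\eps_y=0$ gives $h(\eps_x,\eps_y)=1$, hence $-\mathrm{i}^{1+h(\eps_x,\eps_y)}=1$ and the numerator $a^{\eps_y}u_2^{1-h(\eps_x,\eps_y)}+a^{1-\eps_y}u_1u_2^{h(\eps_x,\eps_y)}=1+au_1u_2$; substituting $u_i=w_i^2$ and using that $\frac{1}{2\pi\mathrm{i}}\int_{\Gamma_1}\frac{du}{u}G(u)=\frac{1}{2\pi\mathrm{i}}\int_{\Gamma_1}\frac{dw}{w}G(w^2)$ for Laurent $G$, one finds that \eqref{smoothphaseeqn} is precisely the displayed double integral above, which identifies it with $\mathbb{K}^{-1}_{1,1}(x,y)$.

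The only step requiring real care is the lattice bookkeeping in the coefficient extraction: one must check that the indices singled out always fall in the admissible ranges of the geometric sums $f_R,f_{R-1}$ appearing in $d^1_{10}$, equivalently that $y\pm e_1$ really are vertices of $\bar{\mathtt{W}}$. This is where the hypothesis that $x,y$ are fixed in terms of $R$ enters, since for $y$ within $O(1)$ of the boundary of $\mathbb{L}_R$ the omitted contributions are the ones instead accounted for by $d^2_{10}$. The contour deformation and the comparison with the smooth-phase kernel are then routine.
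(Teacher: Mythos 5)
Your proof is correct and follows essentially the same route as the paper's: perform the $b$-integrals as a coefficient extraction from the Laurent-polynomial $d^1_{10}$, then identify the surviving double integral with \eqref{smoothphaseeqn} via the substitution $u_i=w_i^2$ and a contour deformation to $\Gamma_1$ justified by $\tilde{c}>0$ on the unit torus. The only (cosmetic) difference is that you extract the $b$-coefficient directly from the vertex-sum form of $d^1_{10}$ in the $(w,b)$ variables, whereas the paper first changes variables to $(u,v)=(w^2,b^2)$ and extracts from the closed-form geometric sums $f_R,f_{R-1}$; your explicit remark that $y\pm e_1\in\bar{\mathtt{W}}_1$ requires $y$ to be fixed in terms of $R$ correctly identifies the same implicit assumption the paper makes.
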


\begin{lemma}\label{Claim2}
For $C,c_0>0$ constants and $x_1,x_2,y_1,y_2$ fixed in terms of $R$,
\begin{equation}
\begin{split}
&\left|\frac{1}{(2\pi \mathrm{i})^4} \int_{\Gamma_{1-\varepsilon}}\frac{d w_1}{w_1} \int_{\Gamma_{1-\varepsilon}}\frac{d w_2}{w_2} \int_{\Gamma_{1-\varepsilon}}\frac{d b_1}{b_1} \int_{\Gamma_{1-\varepsilon}}\frac{d b_2}{b_2}\frac{d^2_{10}(w_1,w_2,b_1,b_2)}{\tilde{c}(w_1^2,w_2^2)w_1^{x_1} w_2^{x_2} b_1^{y_1}b_2^{y_2}}  \right| \\
&\leq CRe^{-c_0R}
\end{split}
\end{equation}

\end{lemma}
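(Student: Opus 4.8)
The plan is to combine the decomposition~\eqref{eq:gfwhite3} with Lemma~\ref{Claim1}: extracting the coefficient of $w_1^{x_1}w_2^{x_2}b_1^{y_1}b_2^{y_2}$ from $d^1_{10}/\tilde{c}(w_1^2,w_2^2)$ produces exactly $\mathbb{K}^{-1}_{1,1}(x,y)$, so (for $x\in\bar{\mathtt{W}}_1,\ y\in\bar{\mathtt{B}}_0$) the difference $K^{-1}(x,y)-\mathbb{K}^{-1}_{1,1}(x,y)$ equals minus the integral appearing in Lemma~\ref{Claim2}, and Proposition~\ref{prop:fintetoinfinite} reduces to that lemma. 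The key structural input is that we sit in the smooth (gaseous) phase: for $a\in(0,1)$ one checks directly that $\tilde{c}(u_1,u_2)=2(1+a^2)+a(u_1+u_1^{-1})(u_2+u_2^{-1})$ has range $[2(1-a)^2,2(1+a)^2]$ on $\{|u_1|=|u_2|=1\}$, hence is bounded away from $0$ there; consequently $1/\tilde{c}(w_1^2,w_2^2)$ is analytic on some annulus $\{1-\delta<|w_i|<1+\delta\}$ and has a Laurent expansion $1/\tilde{c}(w_1^2,w_2^2)=\sum_{k_1,k_2\in\mathbb{Z}}c_{k_1,k_2}\,w_1^{2k_1}w_2^{2k_2}$ with $|c_{k_1,k_2}|\le C\rho^{|k_1|+|k_2|}$ for some $\rho=\rho(a)\in(0,1)$ (take $\varepsilon$ in Lemma~\ref{Claim2} small enough that $\Gamma_{1-\varepsilon}$ lies in this annulus).

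The second ingredient is an a priori bound $|K^{-1}(w,b)|\le C$ uniform in $R$. The graph $\mathbb{L}_R$ has equally many white and black vertices (each $4R^2$), so $K$ is square and invertible; then $K^{-1}=\Delta_a^{-1}K^*$ with $\Delta_a=K^*K$. By~\eqref{eq:gfwhite} and its analogue on $\bar{\mathtt{W}}_0$, $\Delta_a$ is block diagonal over $\bar{\mathtt{W}}_0,\bar{\mathtt{W}}_1$ (the would-be cross terms cancel thanks to the Kasteleyn signs) and in each block has diagonal entry $2(1+a^2)$ and off-diagonal entries of modulus $a$ to at most four neighbours, hence is strictly diagonally dominant with margin $2(1+a^2)-4a=2(1-a)^2>0$, uniformly in $R$. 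Therefore $\|\Delta_a^{-1}\|_{\mathrm{op}}\le \tfrac{1}{2(1-a)^2}$, and since $\|K\|_{\mathrm{op}}\le4$ we get $|K^{-1}(w,b)|=|(\Delta_a^{-1}e_w)^{\top}(K^*e_b)|\le\|\Delta_a^{-1}\|_{\mathrm{op}}\|K\|_{\mathrm{op}}\le C$.

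With these in hand the rest is bookkeeping. Each of the four pieces making up $d^2_{10}$ has the form $a(w_j^{-2}+w_j^2)\,w_i^{\pm2}\,G^{1,0}\big|_{\text{one boundary line of }\bar{\mathtt{W}}_1}$, and every white vertex on such a line has its $x_i$-coordinate equal to $\pm(2R-O(1))$; so the restricted generating function is $w_i^{\pm(2R-O(1))}$ times a Laurent polynomial in the remaining variables with polynomially many (in $R$) coefficients, each an entry $K^{-1}(w',b)$ bounded by $C$ by the previous step. Substituting this together with the Laurent expansion of $1/\tilde{c}$ into the quadruple integral, the $b_1$- and $b_2$-integrals pin the black vertex to the fixed $y$, and the $w_1$- and $w_2$-integrals then force one of the indices $k_1,k_2$ to equal $\pm R+O(1)$ (because the boundary line sits at distance $\sim2R$ from the fixed vertices $x,y$), contributing a factor $\le C\rho^{cR}$; the convergent geometric sum in the other index, and the polynomially many remaining boundary terms, give a bound of the form $CRe^{-c_0R}$ — the retained polynomial factor being the source of the gap with the sharper $e^{-c_0R}$ mentioned in the remark after Proposition~\ref{prop:fintetoinfinite}. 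The three boundary pieces other than the leftmost column are treated identically, with the top and bottom rows of $\bar{\mathtt{W}}_1$ playing the analogous role in the $w_2$-variable, and exclusions such as $x_1\neq1-2R,2R-3$ only delete terms; the remaining parity classes of Proposition~\ref{prop:fintetoinfinite} follow from the same scheme starting from the analogue of~\eqref{eq:gfwhite} on $\bar{\mathtt{W}}_0$. The main obstacle is the uniform-in-$R$ a priori control of $K^{-1}$, i.e. that the finite-box operator $\Delta_a$ retains the spectral gap of its bulk symbol despite the boundary conditions, and, more tediously, verifying in each of the four terms exactly which Laurent index of $1/\tilde{c}$ is selected so as to be sure it always has magnitude of order $R$.
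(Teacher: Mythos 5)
Your overall strategy matches the paper's: isolate the boundary generating functions in $d^2_{10}$, use that $\tilde{c}$ is bounded away from zero on the unit torus to extract an exponentially small factor $\rho^{cR}$ from the $w$-integrals (the paper does this by deforming contours to $\Gamma_r$, $r<1$, after the substitution $u_1\mapsto u_1^{-1}$, which is equivalent to your Laurent-coefficient decay), and multiply by an $O(R)$ count of boundary vertices, each weighted by a uniformly bounded entry of $K^{-1}$. The combinatorial bookkeeping is fine. The gap is in your a priori bound $|K^{-1}(w,b)|\le C$, which is exactly the hard input (it is Claim~\ref{claim:bound} in the paper). Your derivation via $\Delta_a=K^*K$ fails at the boundary of $\mathbb{L}_R$: the appended bottom-boundary white vertices $(4i+1-2R,-2R)$ lie in $\bar{\mathtt{W}}_0$ and have only two incident edges, both of weight $a$, so the corresponding diagonal entries of $\Delta_a$ are $2a^2$ (not $2(1+a^2)$), while the two surviving off-diagonal entries in that row still have modulus $a$ each; since $2a^2<2a$ for $a\in(0,1)$, strict diagonal dominance fails in those rows, and the claimed uniform margin $2(1-a)^2$ is false (indeed $\langle\Delta_a\delta_w,\delta_w\rangle=2a^2$ already shows $\lambda_{\min}(\Delta_a)\le 2a^2$). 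The block-diagonality also needs care at the boundary, since the cancellation of cross terms between $\bar{\mathtt{W}}_0$ and $\bar{\mathtt{W}}_1$ requires \emph{both} common black neighbours to be present (it happens to survive for these particular boundary conditions, but that has to be checked, not asserted).

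Whether $\lambda_{\min}(\Delta_a)$ is bounded below uniformly in $R$ is plausible but is precisely the kind of boundary-effect statement that needs a real argument; your proof supplies none that works. The paper sidesteps operator norms entirely and bounds only the specific entries $K^{-1}(x,\tilde y)$ with $x$ on the boundary of $\bar{\mathtt{W}}_1$, by writing $|K^{-1}_{\mathbb{L}_R^{\star}}(x,\tilde y)|\le Z_{\mathbb{L}_R^{\star}\setminus\{x,\tilde y\}}/Z_{\mathbb{L}_R^{\star}}$ and controlling this ratio through the directed-spanning-tree bijection; crucially, each single gauge ($\star=\mathtt{w}$ or $\mathtt{f}$) yields a bound of the form $C\,a^{\pm\frac12(x_2-\tilde y_2-1)}$ that blows up in one direction, and only combining the two gauges gives the uniform constant. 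To repair your proof you would need either to prove the uniform spectral gap for the finite-box $\Delta_a$ by a different route, or to substitute the paper's partition-function argument for the entrywise bound.
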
 
We now proceed with the rest of the proof of the proposition.  From Lemmas~\ref{Claim1} and~\ref{Claim2}, it follows that only the first term on the right side of~\eqref{eq:gfwhite3} gives a contribution when extracting out the coefficient of $(x_1,x_2)$ and $(y_1,y_2)$ for the white and black vertices respectively while the other term tends to zero exponentially fast.  This verifies the proposition for the case when $x \in \bar{\mathtt{W}}_1$ and $y \in \bar{\mathtt{B}}_0$.  For the case $x \in \bar{\mathtt{W}}_1$ and $y \in \bar{\mathtt{B}}_1$, the difference is that to equation~\eqref{eq:gfwhite}, we multiply by $w_1^{x_1}w_2^{x_2}b_1^{y_1}b_2^{y_2}$ and sum over $x \in \bar{\mathtt{W}}_1$ and $y\in \bar{\mathtt{B}}_1$ instead. The rest of the computation proceeds in a similar fashion. For the case $x \in \bar{\mathtt{W}}_0$,~\eqref{eq:gfwhite} is no longer valid and instead, we have the equation
\begin{equation}
\begin{split}
&\sum_{b \sim x }K^*(x,b)\mathbb{I}_{b=y}=\Delta_a K^{-1} (x,y)= a \bigg( K^{-1}(x+2e_1,y) \mathbb{I}_{x_1<2R-1} \mathbb{I}_{x_2<2R} \\
&+K^{-1}(x+2e_2,y) \mathbb{I}_{x_1>-2R-1} \mathbb{I}_{x_2<2R}  +K^{-1}(x-2e_1,y) \mathbb{I}_{x_1>-2R-1} \mathbb{I}_{x_2>-2R} \\
&+K^{-1}(x-2e_2,y) \mathbb{I}_{x_1<2R-1} \mathbb{I}_{x_2>-2R}  \bigg)
	+2(1+a^2) K^{-1}(x,y) \hspace{7mm}\mbox{for }x \in \bar{\mathtt{W}}_0
\end{split}
\end{equation}
To this equation, we multiply by $w_1^{x_1}w_2^{x_2}b_1^{y_1}b_2^{y_2}$ and sum over $x \in \bar{\mathtt{W}}_0$ and $y\in \bar{\mathtt{B}}_0$ or $y\in \bar{\mathtt{B}}_1$ depending on the case. The main steps of the computation proceed as the case $x \in \bar{\mathtt{W}}_1$ and $y\in \bar{\mathtt{B}}_0$. Note that there are few additional terms due to the vertex $(-1-2R,-2R)$ not being present in $\bar{\mathtt{W}}_0$, but these term are negligible from the same reason behind Lemma~\ref{Claim2}.   \end{proof}

We next prove Lemma~\ref{Claim1}. 
\begin{proof}[Proof of Lemma~\ref{Claim1}] 
We expand out the integral in Lemma~\ref{Claim1} using the definition of $d_{10}^1(w_1,w_2,b_1,b_2)$ which gives
\begin{equation}
\begin{split}
&\frac{1}{(2\pi \mathrm{i})^4} \int_{\Gamma_{1-\varepsilon}}\frac{d w_1}{w_1} \int_{\Gamma_{1-\varepsilon}}\frac{d w_2}{w_2} \int_{\Gamma_{1-\varepsilon}}\frac{d b_1}{b_1} \int_{\Gamma_{1-\varepsilon}}\frac{d b_2}{b_2} \frac{( b_1 b_2 +a b_1^{-1} b_2^{-1})w_1b_1 w_2^2 b_2^2}{w_1^{2R+x_1} w_2^{2R+x_2} b_1^{2R+y_1} b_2^{2R+y_2}}
\\ 
&\times \frac{ 
 ( f_R(w_1^4 b_1^4)f_R(w_2^4 b_2^4)+ w_1^2b_1^2 w_2^2 b_2^2 f_{R-1}(w_1^4 b_1^4)f_{R-1}(w_2^4 b_2^4))}{ \tilde{c}(w_1^2,w_2^2)}.
\end{split}
\end{equation}
{ We take the change of variables $w_i = \sqrt{u_i}$ and $b_i =\sqrt{v_i}$ for $i \in \{1,2\}$ for the above integral, moving the contours of integration from $\Gamma_{(1-\eps)^2}$ to $\Gamma_{1-\eps}$ which does not pick up any additional contributions. This change of variables doubles the contour of integration for each integral but there is an extra factor of $1/2$ from each change of variables which means the above equation is equal to }
\begin{equation}
\begin{split}
&\frac{1}{(2\pi \mathrm{i})^4} \int_{\Gamma_{1-\varepsilon}}\frac{d u_1}{u_1} \int_{\Gamma_{1-\varepsilon}}\frac{d u_2}{u_2} \int_{\Gamma_{1-\varepsilon}}\frac{d v_1}{v_1} \int_{\Gamma_{1-\varepsilon}}\frac{d v_2}{v_2} \frac{( v_1 v_2 +a ) }{u_1^{\frac{2R+x_1-1}{2}} u_2^{\frac{2R+x_2-2}{2}} v_1^{\frac{2R+y_1}{2}} v_2^{\frac{2R+y_2-1}{2}}}
\\ 
&\times \frac{ 
 ( f_R(u_1^2 v_1^2) f_R(u_2^2 v_2^2)+ u_1u_2v_1v_2 f_{R-1}(u_1^2 v_1^2) f_{R-1}(u_2^2 v_2^2))}{ \tilde{c}(u_1,u_2)}
\end{split}
\end{equation}
In the above integral, we can compute the integrals with respect to $v_1$ and $v_2$. This amounts to extracting coefficients of $v_1^{R+\frac{y_1}{2}}$ and $v_2^{R+\frac{y_2-1}{2}}$ for $y\in \bar{\mathtt{B}}_0$ in the numerator of the integrand. Notice that we cannot get a contribution for this from both $f_R(u_1^2 v_1^2) f_R(u_2^2 v_2^2)$ and $ u_1u_2v_1v_2 f_{R-1}(u_1^2 v_1^2) f_{R-1}(u_2^2 v_2^2))$ because $y \in \bar{\mathtt{W}}_0$ (that is, one term gives a contribution for black vertices of the form $(4i+1,4j)$ while the other term gives a contribution for the vertices of the form $(4i+3,4j+2)$). Doing this extraction gives

\begin{equation}
\frac{1}{(2\pi \mathrm{i})^2} \int_{\Gamma_{1-\varepsilon}}\frac{d u_1}{u_1} \int_{\Gamma_{1-\varepsilon}}\frac{d u_2}{u_2}  \frac{( u_1^{-1} u_2^{-1} +a ) }{u_1^{\frac{2R+x_1-1}{2}} u_2^{\frac{2R+x_2-2}{2}} u_1^{-\frac{2R+y_1}{2}} u_2^{-\frac{2R+y_2-1}{2}}\tilde{c}(u_1,u_2)}
\end{equation}
and simplifying gives
\begin{equation}
\begin{split}
&\frac{1}{(2\pi \mathrm{i})^2} \int_{\Gamma_{1-\varepsilon}}\frac{d u_1}{u_1} \int_{\Gamma_{1-\varepsilon}}\frac{d u_2}{u_2}  \frac{( u_1^{-1} u_2^{-1} +a ) }{u_1^{\frac{x_1-y_1-1}{2}} u_2^{\frac{x_2-y_2-1}{2}}\tilde{c}(u_1,u_2)}\\
&=\frac{1}{(2\pi \mathrm{i})^2} \int_{\Gamma_{1-\varepsilon}}\frac{d u_1}{u_1} \int_{\Gamma_{1-\varepsilon}}\frac{d u_2}{u_2}  \frac{( 1 +au_1u_2 ) }{u_1^{\frac{x_1-y_1+1}{2}} u_2^{\frac{x_2-y_2+1}{2}}\tilde{c}(u_1,u_2)}.
\end{split}
\end{equation}
Since $\tilde{c}(u_1,u_2)$ contains no poles in $\{(u_1,u_2):1-\varepsilon \leq u_1,u_2 \leq 1 \}$, we deform both contours to $\Gamma_1$ and the above integral is exactly equal to $\mathbb{K}_{1,1}^{-1}(x,y)$.  

\end{proof}

We now prove Lemma~\ref{Claim2}.
\begin{proof}[Proof of Lemma~\ref{Claim2}]
We only show the bound for one generic term. The rest of the terms in $d_{10}^2(w_1,w_2,b_1,b_2)$ follow from similar computations, as explained after the bound on the generic term.

Consider the term
\begin{equation} \label{eq:integralboundd2}
	\frac{1}{(2\pi \mathrm{i})^4} \int_{\Gamma_{1-\varepsilon}}\frac{d w_1}{w_1} \int_{\Gamma_{1-\varepsilon}}\frac{d w_2}{w_2} \int_{\Gamma_{1-\varepsilon}}\frac{d b_1}{b_1} \int_{\Gamma_{1-\varepsilon}}\frac{d b_2}{b_2}  \frac{G^{1,0} |_{ x_1=1-2R}}{ \tilde{c}(w_1^2,w_2^2)w_1^{\tilde{x}_1} w_2^{\tilde{x}_2} b_1^{\tilde{y}_1}b_2^{\tilde{y}_2}}
\end{equation}
where $\tilde{x}=( \tilde{x}_1,\tilde{x}_2) \in\bar {\mathtt{W}}_1$ and $\tilde{y}=( \tilde{y}_1,\tilde{y}_2) \in\bar {\mathtt{B}}_0$ and we recall that 
\begin{equation}
	G^{1,0} \big|_{ x_1=1-2R}=\sum_{x \in \bar{\mathtt{W}}_1}\sum_{ y \in \bar{\mathtt{B}}_0} K^{-1}(x,y) w_1^{1-2R } w_2^{x_2} b_1^{y_1} b_2^{y_2}. 
\end{equation}
We take the change of variables $w_i = \sqrt{u_i}$ and $b_i =\sqrt{v_i}$ for $i \in \{1,2\}$ for the integral in~\eqref{eq:integralboundd2}, moving the contours of integration from $\Gamma_{(1-\eps)^2}$ to $\Gamma_{1-\eps}$ which does not pick up any additional contributions. This change of variables doubles the contour of integration for each integral but there is an extra factor of $1/2$ from each change of variables which means that~\eqref{eq:integralboundd2} is equal to
\begin{equation}\label{eq:integralboundd3}
\begin{split}
&\frac{1}{(2\pi \mathrm{i})^4} \int_{\Gamma_{1-\varepsilon}}\frac{d u_1}{u_1} \int_{\Gamma_{1-\varepsilon}}\frac{d u_2}{u_2} \int_{\Gamma_{1-\varepsilon}}\frac{d v_1}{v_1} \int_{\Gamma_{1-\varepsilon}}\frac{d v_2}{v_2}\frac{u_1^{\frac{1-2R-\tilde{x}_1}{2}}}{\tilde{c}(u_1,u_2)}  \\
&\times 
	\sum_{\substack{x \in \bar{\mathtt{W}}_1  \\x_1=1-2R}}\sum_{y \in \bar{\mathtt{B}}_0}  K^{-1}(x,y)  u_2^{\frac{x_2-\tilde{x}_2}{2}} v_1^{\frac{y_1-\tilde{y}_1}{2}} v_2^{\frac{y_2-\tilde{y}_2}{2}}, 
\end{split}
\end{equation}
{ where the above sum in $x=(x_1,x_2)$ is only summed over those pairs with $x_1=1-2R$.}
We perform the integrals in~\eqref{eq:integralboundd3} with respect to $v_1$ and $v_2$ which gives
\begin{equation}\label{eq:integralboundd4}
\begin{split}
&\frac{1}{(2\pi \mathrm{i})^2} \int_{\Gamma_{1-\varepsilon}}\frac{d u_1}{u_1} \int_{\Gamma_{1-\varepsilon}}\frac{d u_2}{u_2}\frac{u_1^{\frac{1-2R-\tilde{x}_1}{2}}}{\tilde{c}(u_1,u_2)}  
\sum_{\substack{x \in \bar{\mathtt{W}}_1 \\ x_1=1-2R}} K^{-1}(x,\tilde{y})  u_2^{\frac{x_2-\tilde{x}_2}{2}}
\end{split}
\end{equation}
where $\tilde{y}=(\tilde{y}_1,\tilde{y}_2)$.  For the integral with respect to $u_1$, we make the change of variables $u_1 \mapsto u_1^{-1}$. Notice that $\tilde{c}(u_1^{-1},u_2) =\tilde{c}(u_1,u_2)$ and that $\tilde{c}(u_1,u_2)$ contains no zeroes for $r<|u_1|<1/r$ for $0<r<1$ close to 1. We deform the contour of integration for the integral with respect to $u_1$ to $\Gamma_r$. We also split up the integral with respect to $u_2$ depending on whether  $x_2\leq \tilde{x}_2$ or $x_2 >\tilde{x}_2$ and in the latter case, deform the contour to $\Gamma_{1+\eps}$. Again, no additional contributions are picked up.  Under these steps,~\eqref{eq:integralboundd4} is equal to
\begin{equation}\label{eq:integralboundd5}
\begin{split}
&\frac{1}{(2\pi \mathrm{i})^2} \int_{\Gamma_{r}}\frac{d u_1}{u_1} \int_{\Gamma_{1-\varepsilon}}\frac{d u_2}{u_2} \frac{u_1^{\frac{2R-1+\tilde{x}_1}{2}}}{\tilde{c}(u_1,u_2)}  
\sum_{\substack{x \in \bar{\mathtt{W}}_1 \\ x_1=1-2R \\ x_2>\tilde{x}_2}} K^{-1}(x,\tilde{y})  u_2^{\frac{x_2-\tilde{x}_2}{2}}\\
&+\frac{1}{(2\pi \mathrm{i})^2} \int_{\Gamma_{r}}\frac{d u_1}{u_1} \int_{\Gamma_{1+\varepsilon}}\frac{d u_2}{u_2}\frac{u_1^{\frac{2R-1+\tilde{x}_1}{2}}}{\tilde{c}(u_1,u_2)}  
\sum_{\substack{x \in \bar{\mathtt{W}}_1 \\ x_1=1-2R\\x_2 \leq \tilde{x}_2}} K^{-1}(x,\tilde{y})  u_2^{\frac{x_2-\tilde{x}_2}{2}}.
\end{split}
\end{equation}
We now need the following claim which is proved after the conclusion of the proof of the lemma.
\begin{claim}\label{claim:bound}
For $x=(x_1,x_2) \in \bar{\mathtt{W}}_1$ with $x_1=1-2R$, $x_1=2R-3$, $x_2=2-2R$ or $x_2=2R-2$, and $\tilde{y}=(\tilde{y}_1,\tilde{y}_2) \in \bar{\mathtt{B}}$, we have
\begin{equation}
| K^{-1}(x,\tilde{y}) | \leq C
\end{equation}
for some $C>0$ constant. 
\end{claim}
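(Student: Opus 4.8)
The plan is to avoid estimating $K^{-1}(x,\tilde y)$ pointwise and instead to bound the whole row $K^{-1}(x,\cdot\,)$ in $\ell^2$, which is controlled by the smallest singular value of $K=K_{\mathbb{L}_R}$. Since $(K^{-1})(K^{-1})^{*}=(K^{*}K)^{-1}$, for any white vertex $x$ one has
\[
\sum_{\mathtt{b}\in\bar{\mathtt{B}}}\bigl|K^{-1}(x,\mathtt{b})\bigr|^{2}=\bigl((K^{*}K)^{-1}\bigr)(x,x)\le \frac{1}{\lambda_{\min}(K^{*}K)},
\]
so it is enough to show $\lambda_{\min}(K^{*}K)\ge c>0$ with $c$ independent of $R$; then $|K^{-1}(x,\tilde y)|\le c^{-1/2}$ for \emph{every} $x\in\bar{\mathtt{W}}$ and $\tilde y\in\bar{\mathtt{B}}$, which is stronger than the claim.

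First I would observe that $\Delta_a:=K^{*}K$ is block diagonal for the splitting $\bar{\mathtt{W}}=\bar{\mathtt{W}}_0\sqcup\bar{\mathtt{W}}_1$. Indeed $(\Delta_a)(\mathtt{w},\mathtt{w}')\ne 0$ forces $\mathtt{w}$ and $\mathtt{w}'$ to share a black neighbour, i.e.\ $\mathtt{w}-\mathtt{w}'\in\{0,\pm 2e_1,\pm 2e_2\}\cup\{\pm(e_1-e_2),\pm(e_1+e_2)\}$; the first set preserves the parity class, while for the displacements $\pm(e_1\pm e_2)$ there are exactly two common black neighbours whose contributions to $\Delta_a$ cancel because of the Kasteleyn orientation. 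This cancellation is exactly what is recorded in~\eqref{eq:gfwhite} (and in the analogous identity for $x\in\bar{\mathtt{W}}_0$ used later in the proof of Proposition~\ref{prop:fintetoinfinite}), so nothing new needs to be checked.

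On the $\bar{\mathtt{W}}_1$--block the bound is then immediate. Because the black grid of $\mathbb{L}_R$ extends one lattice step beyond the interior white grid on all four sides, every vertex of $\bar{\mathtt{W}}_1$ has all four black neighbours; hence the diagonal of $\Delta_a$ equals $1+1+a^{2}+a^{2}=2(1+a^{2})$ at every such vertex, while each nonzero off-diagonal entry (in a direction $\pm 2e_i$) has modulus $a$ and there are at most four per row. Thus $\Delta_a|_{\bar{\mathtt{W}}_1}=2(1+a^{2})\,\mathbb{I}+aM$ with $M$ Hermitian, $M(x,x)=0$, $|M(x,x')|\le 1$ and at most four nonzero entries per row, so $\|M\|_{\mathrm{op}}\le 4$ and $\lambda_{\min}\bigl(\Delta_a|_{\bar{\mathtt{W}}_1}\bigr)\ge 2(1+a^{2})-4a=2(1-a)^{2}>0$, uniformly in $R$. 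Since $(K^{*}K)^{-1}$ is block diagonal as well and $x\in\bar{\mathtt{W}}_1$, the displayed inequality gives $|K^{-1}(x,\tilde y)|\le\bigl(2(1-a)^{2}\bigr)^{-1/2}$, which proves the claim.

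I do not expect a genuine obstacle here: the only two points that need a line of verification are the sign cancellation that makes $\Delta_a$ block diagonal — already contained in~\eqref{eq:gfwhite} — and the fact that no interior white vertex of $\mathbb{L}_R$ is missing a neighbour, which is clear from the explicit vertex sets $\bar{\mathtt{W}},\bar{\mathtt{B}}$. The estimate is a soft spectral bound, and it is uniform in $R$ precisely because the bulk symbol $\tilde{c}(u_1,u_2)=2(1+a^{2})+a(u_1+u_1^{-1})(u_2+u_2^{-1})$ of the smooth phase is bounded below by $2(1-a)^{2}$ on the unit torus, while the boundary conditions of $\mathbb{L}_R$ do not lower the diagonal of $\Delta_a$ on the parity class $\bar{\mathtt{W}}_1$.
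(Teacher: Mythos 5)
Your proof is correct, and it takes a genuinely different route from the paper's. The paper argues combinatorially: it gauges $\mathbb{L}_R$ to $\mathbb{L}_R^{\mathtt{w}}$ and $\mathbb{L}_R^{\mathtt{f}}$ via Proposition~\ref{prop:gaugeequivalent}, bounds $|K^{-1}_{\mathbb{L}_R^{\star}}(x,\tilde y)|$ by the partition-function ratio $Z_{\mathbb{L}_R^{\star}\backslash\{x,\tilde y\}}/Z_{\mathbb{L}_R^{\star}}$, controls that ratio through the directed spanning tree correspondence, and then plays the two reciprocal gauge factors $a^{\pm\frac12(x_2-\tilde y_2\mp1)}$ against each other so that one of the two resulting bounds is uniform whatever the sign of $x_2-\tilde y_2$. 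You instead extract a uniform spectral gap for $\Delta_a=K^{*}K$ on the $\bar{\mathtt{W}}_1$-block and convert it into an $\ell^2$ bound on the rows of $K^{-1}$. The two verifications you flag are indeed the only ones needed and both check out: \eqref{eq:gfwhite} is exact for every $x\in\bar{\mathtt{W}}_1$ (the indicators only delete off-diagonal terms near the edge of the grid), which gives simultaneously the block structure, $(\Delta_a)(x,x)=2(1+a^{2})$ and $(\Delta_a)(x,x\pm2e_i)=a$; and every vertex of $\bar{\mathtt{W}}_1$ is an interior vertex of the white grid — all the extra boundary white vertices of $\mathbb{L}_R$ have coordinate sum $\equiv 1\pmod 4$ and hence lie in $\bar{\mathtt{W}}_0$ — so no diagonal entry is diminished. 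Your route is shorter, bypasses the spanning-tree and gauge machinery of Sections~7.1--7.2 entirely, and yields the stronger conclusion $|K^{-1}(x,\tilde y)|\le(2(1-a)^{2})^{-1/2}$ uniformly over \emph{all} $x\in\bar{\mathtt{W}}_1$, not just the boundary ones in the statement. What the paper's combinatorial route buys is Remark~\ref{remarkclaim}: the analogous bound for the boundary vertices of $\bar{\mathtt{W}}_0$, where your Gershgorin estimate degrades — those vertices have only two incident edges, both of weight $a$, so the corresponding diagonal entries of $\Delta_a$ drop to $2a^{2}$ while the off-diagonal row sum is $2a$, and $2a^{2}-2a<0$ for all $a\in(0,1)$. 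So if the $\bar{\mathtt{W}}_0$ case is also wanted, your block would need a separate (non-Gershgorin) argument, whereas the partition-function proof adapts with only minor changes.
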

We can now take absolute values of each of the terms in~\eqref{eq:integralboundd5}. Using the claim and that $\tilde{c}(u_1,u_2)>0$ for $u_1$ on $\Gamma_r$ and $u_2$ on $\Gamma_{1-\eps}$, we have that~\eqref{eq:integralboundd5} is bounded above by 
\begin{equation}\label{eq:integralboundd6}
\begin{split}
&C_1 \int_{\Gamma_{r}}\frac{d u_1}{u_1} \int_{\Gamma_{1-\varepsilon}}\frac{d u_2}{u_2} {|u_1|^{\frac{2R-1+\tilde{x}_1}{2}}}  
\sum_{\substack{x \in \bar{\mathtt{W}}_1 \\ x_1=1-2R \\ x_2>\tilde{x}_2}}  1 \\
&+C_2 \int_{\Gamma_{r}}\frac{d u_1}{u_1} \int_{\Gamma_{1+\varepsilon}}\frac{d u_2}{u_2}{|u_1|^{\frac{2R-1+\tilde{x}_1}{2}}}  
\sum_{\substack{x \in \bar{\mathtt{W}}_1 \\ x_1=1-2R\\x_2 \leq \tilde{x}_2}}   1,
\end{split} 
\end{equation}
where $C_1,C_2>0$ are constants. Since $r<1$ and that $\tilde{x}_1$ is fixed in terms of $R$, the above term is bounded by $C R e^{-c_0 R}$ as required.

To bound the rest of the terms in the integrals on the left side of the equation in Lemma~\ref{Claim2}, we apply the same procedure using either the variable $u_1$ or $u_2$ in bounding the integral, depending on which has a factor of $R$ or $-R$ in its exponent. Note that for integrals containing the term $u_1^{R}$ or $u_2^R$, we can immediately make the contour deformation to $\Gamma_r$ without the change of variables $u_1 \mapsto u_1^{-1}$. The analagous bound to the one given in Claim~\ref{claim:bound} also holds for $x \in \bar{\mathtt{W}}_0$; see Remark~\ref{remarkclaim}.   After applying these steps to all the terms in the integral on the left side of the equation in Lemma~\ref{Claim2}, we find that all terms are bounded by $CRe^{-c_0R}$ as required. Finally, we now give the proof of Claim~\ref{claim:bound} which completes the proof of the proposition. 
\end{proof}

Finally, we give the proof of Claim~\ref{claim:bound}.
\begin{proof}[Proof of Claim~\ref{claim:bound}]	
	For the purpose of this proof, denote $K_{\mathbb{L}_R^{\mathtt{w}}}$ (resp. $K_{\mathbb{L}_R^{\mathtt{f}}}$) and $K_{\mathbb{L}_R^{\mathtt{w}}}^{-1}$ (resp $K_{\mathbb{L}_R^{\mathtt{f}}}^{-1}$) to be the Kasteleyn and inverse Kasteleyn matrices on $\mathbb{L}_R^{\mathtt{w}}$ resp ($\mathbb{L}_R^{\mathtt{f}}$). We also let $C,C_1,C_2>0$ be arbitrary constants throughout the proof and also denote $\mathbb{L}_R^\star \backslash \{ x, \tilde{y} \}$ to be the graph $\mathbb{L}_R^\star$ with the vertices $x$ and $\tilde{y}$ removed from $\mathbb{L}_R^\star$ along with their incident edges, where $\star$ is either $\mathtt{w}$ or $\mathtt{f}$.  

	From the gauge transformation given in  Proposition~\ref{prop:gaugeequivalent}, we have
\begin{equation} \label{claimeq1}
	K^{-1}(x, \tilde{y}) = a^{\frac{1}{2}(x_2+2R-2)+1}a^{-\frac{1}{2}(\tilde{y}_2+2R-1)} K_{\mathbb{L}_R^{\mathtt{w}}}^{-1}(x, \tilde{y}) =a^{\frac{1}{2}(x_2-\tilde{y}_2+1)}K_{\mathbb{L}_R^{\mathtt{w}}}^{-1}(x, \tilde{y}).
\end{equation}
	The entry $K_{\mathbb{L}_R^{\mathtt{w}}}^{-1}(x,\tilde{y})$ when $x$ and $\tilde{y}$ are not on the same face, up to an overall sign, is a signed weighted count of dimer coverings on $\mathbb{L}_R^{\mathtt{w}}\backslash \{x,\tilde{y}\}$ divided by the partition function. The sign in the signed weighted count is from the fact that the original Kasteleyn orientation on $\mathbb{L}_R^{\mathtt{w}}$ is no longer a valid Kasteleyn orientation on $\mathbb{L}_R^{\mathtt{w}}\backslash \{x ,\tilde{y}\}$.   Nevertheless, this signed weighted count is bounded above by the partition function on $\mathbb{L}_R^{\mathtt{w}}\backslash \{x,\tilde{y}\}$.  Therefore, we have
\begin{equation}\label{claimeq2}
|K_{\mathbb{L}_R^{\mathtt{w}}}^{-1}(x, \tilde{y})| \leq \frac{Z_{\mathbb{L}_R^{\mathtt{w}}\backslash \{x,\tilde{y}\}}}{Z_{\mathbb{L}_R^{\mathtt{w}}}}
\end{equation}
where $Z_G$ denotes the partition function of the dimer model on the graph $G$.
	From the correspondence detailed in Section~\ref{subsec:spanning}, the dimer model on $\mathbb{L}_R^{\mathtt{w}}$ is equivalent to  directed random spanning tree on $\mathbb{T}_R^{\mathtt{w},p}$ with $x$ being a vertex on the graph of the primal tree.  For this directed (primal) spanning tree, there is no directed edge passing through the vertex $\tilde{y}$, there is an incoming edge into $x$ but no outgoing edge from $x$.  Each of these is a restriction of the total number of weighted spanning tree configurations (up to a constant) and we conclude that 
\begin{equation}
{Z_{\mathbb{L}_R^{\mathtt{w}}\backslash \{x,\tilde{y}\}}} \leq C_1{Z_{\mathbb{L}_R^{\mathtt{w}}}}. 
\end{equation}
Using the above equation and~\eqref{claimeq1} and~\eqref{claimeq2}, we find that 
\begin{equation}
K^{-1}(x, \tilde{y}) \leq C_1 a^{\frac{1}{2}(x_2-\tilde{y}_2-1)}.
\end{equation}

Proceeding as above which gave equations~\eqref{claimeq1} and~\eqref{claimeq2}, but instead using the gauge transformation between $\mathbb{L}_R$ and $\mathbb{L}_R^{\mathtt{f}}$ in Proposition~\ref{prop:gaugeequivalent}, we also have  
\begin{equation}\label{claimeq3}
K^{-1}(x, \tilde{y})=a^{\frac{1}{2}(\tilde{y}_2-x_2-1)} K_{\mathbb{L}_R^{\mathtt{f}}}^{-1} (x, \tilde{y})
\end{equation}
and 
\begin{equation}
|K_{\mathbb{L}_R^{\mathtt{f}}}^{-1} (x, \tilde{y})| \leq 
\frac{Z_{\mathbb{L}_R^{\mathtt{f}}\backslash \{x,\tilde{y}\}}}{Z_{\mathbb{L}_R^{\mathtt{f}}}}.
\end{equation}
	From the correspondence detailed in Section~\ref{subsec:spanning}, the dimer model on $\mathbb{L}_R^{\mathtt{f}}$ is equivalent to  directed random spanning tree on $\mathbb{T}_R^{\mathtt{f},p}$, but this time, $x$ is a vertex on the graph of the dual tree.  To put the restriction onto the primal tree, for simplicity we suppose that $x_1=1-2R$ (the other cases follow from a similar argument). For this choice of $x_1$, we have	
\begin{equation}
Z_{\mathbb{L}_R^{\mathtt{f}}\backslash \{x,\tilde{y}\}} = Z_{\mathbb{L}_R^{\mathtt{f}}\backslash \{x,\tilde{y}, x-e_1+e_2,x-e_1\}}+a^2Z_{\mathbb{L}_R^{\mathtt{f}}\backslash \{x,\tilde{y}, x-e_1+e_2,x+e_2\}}
\end{equation}
	which follows from just partitioning over dimers incident to $x-e_1+e_2$.   This split has removed the restriction on the dual tree. Each of the terms $Z_{\mathbb{L}_R^{\mathtt{f}}\backslash \{x,\tilde{y}, x-e_1+e_2,x-e_1\}}$ and $Z_{\mathbb{L}_R^{\mathtt{f}}\backslash \{x,\tilde{y}, x-e_1+e_2,x+e_2\}}$ can be bounded by $\frac{1}{2}C_2{Z_{\mathbb{L}_R^{\mathtt{f}}}}$ because each  of their directed spanning tree configurations are contained within the directed spanning tree on $\mathbb{T}_R^{\mathtt{f},p}$ for $C_2$ large enough. This and the two equations above give
\begin{equation}\label{claimeq4}
|K^{-1}(x, \tilde{y})|=C_2a^{\frac{1}{2}(\tilde{y}_2-x_2+1)}.
\end{equation}
Since both~\eqref{claimeq3} and~\eqref{claimeq4} hold, for large enough $C_1$ and $C_2$ we obtain the claim. 
 
\end{proof}

\begin{remark}\label{remarkclaim}
An analagous bound to Claim~\ref{claim:bound} holds for $x=(x_1,x_2) \in \bar{\mathtt{W}}_0$ with $x_1=-1-2R$, $x_1=2R-1$, $x_2=-2R$ or $x_2=2R$. The same proof holds, albeit with a simplification as now the removed vertices are on the boundary $\mathbb{L}_R$. We omit  this computation as it contains no additional technical information. 
\end{remark}

\subsection{Proof of Theorem~\ref{thm:onecorridor}} 
 Before proving Theorem~\ref{thm:onecorridor}, we need the following lemma.

\begin{lemma}\label{lem:oneended}
The directed spanning forests on  $\mathbb{T}^{\mathtt{w},p}$ and $\mathbb{T}^{\mathtt{f},p}$ are single trees almost surely.
\end{lemma}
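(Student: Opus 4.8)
The statement for $\mathbb{T}^{\mathtt{w},p}$ is precisely Proposition~\ref{prop:tree}, so the only new content is the assertion for $\mathbb{T}^{\mathtt{f},p}$, and the plan is to deduce it from Proposition~\ref{prop:tree} by a lattice symmetry. First I would record the weighted directed graph structure of $\mathbb{T}^{\mathtt{f},p}$: by the dimer-to-tree dictionary of Section~\ref{subsec:spanning} and the edge weights of $\mathbb{L}_R^{\mathtt{f}}$, its directed edges $(v,v+2(-1)^k e_i)$ carry weight $a^{2(1-k)}$, so the underlying random walk takes the steps $e_1,e_2,-e_1,-e_2$ with weights $a^2,a^2,1,1$, has nonzero drift in the direction $-(e_1+e_2)$, and is in particular transient. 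Hence, exactly as for $\mathbb{T}^{\mathtt{w},p}$, Wilson's algorithm rooted at infinity (\cite{BLPS:01}) is available, and the directed spanning forest on $\mathbb{T}^{\mathtt{f},p}$ it produces is the wired one and agrees with the $R\to\infty$ limit of the rooted trees $\mathbb{T}_R^{\mathtt{f},p}$.

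Next I would use the point reflection $\iota\colon z\mapsto -z$ of $\mathbb{Z}^2$. It sends $e_i\mapsto -e_i$ and exchanges the parity classes $\mathtt{W}_0^*\leftrightarrow\mathtt{W}_1^*$ (and $\mathtt{B}_0^*\leftrightarrow\mathtt{B}_1^*$), so it carries the vertex set and weighted directed edge set of $\mathbb{T}^{\mathtt{w},p}$, together with its $(1,1,a^2,a^2)$-walk, onto those of $\mathbb{T}^{\mathtt{f},p}$, together with its $(a^2,a^2,1,1)$-walk: indeed a directed edge $(v,v+2(-1)^k e_i)$ of weight $a^{2k}$ is mapped to the directed edge $(-v,-v+2(-1)^{k+1}e_i)$ of the same weight $a^{2k}=a^{2(1-(k+1\bmod 2))}$, which is exactly an edge of $\mathbb{T}^{\mathtt{f},p}$. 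Since a directed spanning tree of $\mathbb{T}^{\mathtt{w},p}$ is mapped by $\iota$ to a directed spanning tree of $\mathbb{T}^{\mathtt{f},p}$ of equal weight, and since Wilson's algorithm rooted at infinity is built intrinsically from the walk and loop-erasure, $\iota$ pushes forward the wired directed spanning forest on $\mathbb{T}^{\mathtt{w},p}$ to that on $\mathbb{T}^{\mathtt{f},p}$; being a single tree is invariant under a graph isomorphism, so Proposition~\ref{prop:tree} gives the claim for $\mathbb{T}^{\mathtt{f},p}$. (Equivalently, by the criterion used to prove Proposition~\ref{prop:tree}, see \cite[Theorem 10.22]{LP:16} and \cite{LPS:03,Pem:91}, one needs two independent copies of the $(a^2,a^2,1,1)$-walk from distinct starting points to meet almost surely, and $\iota$ turns this into the $(1,1,a^2,a^2)$ statement established in \cite[Theorem 1.3]{SSSWX:18}.)

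The step I expect to require the most care, as opposed to a one-line citation, is the passage to infinite volume for $\mathbb{T}_R^{\mathtt{f},p}$: since its root $(-1-2R,-2R)$ recedes to infinity along the very direction in which the walk drifts, one must check, just as in the $\mathbb{T}^{\mathtt{w},p}$ case, that a walk started from a fixed vertex exits $\mathbb{L}_R^{\mathtt{f}}$ with probability tending to one before returning to a fixed neighbourhood of the origin, so that the finite loop-erased walks converge to their infinite-volume counterparts and Wilson's algorithm rooted at infinity is legitimate here. This is routine for walks with nonzero drift and parallels the discussion already given for $\mathbb{T}^{\mathtt{w},p}$, but it should be spelled out rather than merely asserted.
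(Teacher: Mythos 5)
Your proposal is correct in its essential content, and it rests on the same symmetry as the paper's proof --- rotation by $\pi$ of the lattice --- but it executes that symmetry at a different level. The paper works at the level of the full-plane dimer measure: by Propositions~\ref{prop:fintetoinfinite} and~\ref{prop:gaugeequivalent}, both forests are functions of the \emph{same} full-plane smooth phase configuration, and the invariance of the inverse Kasteleyn kernel~\eqref{smoothphaseeqn} under rotation by $\pi$ (composed with a shift by $e_1+e_2$) matches all cylinder events of the forest on $\mathbb{T}^{\mathtt{w},p}$ with those on $\mathbb{T}^{\mathtt{f},p}$ via Theorem~\ref{localstatisticsthm}; since the laws then agree, the single-tree property transfers from Proposition~\ref{prop:tree} with no further discussion of roots or of Wilson's algorithm on $\mathbb{T}^{\mathtt{f},p}$. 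You instead work at the level of the weighted directed graphs, and your verification that $z\mapsto -z$ is a weight-preserving isomorphism carrying the $(1,1,a^2,a^2)$-walk to the $(a^2,a^2,1,1)$-walk is correct. What this buys is a self-contained statement about the wired forest on $\mathbb{T}^{\mathtt{f},p}$ (and the neat reduction of the intersection criterion to the case already settled for $\mathbb{T}^{\mathtt{w},p}$); what it costs is exactly the obligation you flag in your last paragraph, namely identifying the $R\to\infty$ limit of the trees $\mathbb{T}_R^{\mathtt{f},p}$, rooted at the single receding corner $(-1-2R,-2R)$, with the wired forest produced by Wilson's algorithm rooted at infinity. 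That identification is a genuine extra step (a rooted tree with one distinguished boundary vertex is not a priori the wired object, and free and wired limits can differ for transient walks), and the paper does not need to address it precisely because its argument never leaves the dimer side: the forest on $\mathbb{T}^{\mathtt{f},p}$ is \emph{defined} as the one induced by the smooth phase, and the kernel symmetry shows it is the image of the wired forest on $\mathbb{T}^{\mathtt{w},p}$ under the isomorphism. If you want to close your version rigorously, the shortest route is therefore to replace the ``routine for walks with nonzero drift'' remark by the kernel-level symmetry, which delivers the rooted-to-wired identification for free.
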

\begin{remark}
	A similar result for a more general construction was proved in~\cite{Sun:16}, however that approach requires embedding spanning forests (cycle-rooted spanning forests) on the torus and taking the toroidal exhaustion. Our approach bypasses this. 
\end{remark}

\begin{proof}[Proof of Lemma~\ref{lem:oneended}]
Proposition~\ref{prop:fintetoinfinite} gives that, as $R \to \infty$, the dimer model on $\mathbb{L}_R$ converges weakly to the full-plane smooth phase.  Moreover, Proposition~\ref{prop:gaugeequivalent} shows that the full-plane smooth phase is equivalent to the directed spanning forest on both $\mathbb{T}^{\mathtt{w},p}$ and $\mathbb{T}^{\mathtt{f},p}$.  All edge probabilities for directed spanning forests on $\mathbb{T}^{\mathtt{w},p}$ and $\mathbb{T}^{\mathtt{f},p}$ can then be computed explicitly using the local statistics formula given in Theorem~\ref{localstatisticsthm} with the correlation kernel given in~\eqref{smoothphaseeqn}. Moreover, by symmetry of the full-plane smooth phase inverse Kasteleyn matrix, probabilities of all cylinder events of directed spanning forests on $\mathbb{T}^{\mathtt{w},p}$ are equivalent to those on $\mathbb{T}^{\mathtt{f},p}$ after rotating the configurations by $\pi$. This can be seen by rotating the dimer configuration on $\mathbb{T}^{\mathtt{w},p}$, followed by shifting  the configuration by the vector $e_1+e_2$ and computing the new cylinder events there.  From Proposition~\ref{prop:tree}, the directed spanning forest on $\mathbb{T}^{\mathtt{w},p}$ is a single tree almost surely and due to the equivalence, the directed spanning forest on $\mathbb{T}^{\mathtt{f},p}$ is also a single tree.  
\end{proof}

\begin{proof}[Proof of Theorem~\ref{thm:onecorridor}]

Recall that the $a$-dimers correspond to directed edges on both directed spanning trees on  $\mathbb{T}^{\mathtt{w},p}$ and $\mathbb{T}^{\mathtt{f},p}$. That is, for $w \in \mathtt{W}_0^*$, the $a$-dimers $(w,w+e_i)$ correspond to the directed edges $(w,w+2e_i)$ for $i \in \{1,2\}$ which is on the directed spanning tree $\mathbb{T}^{\mathtt{f},p}$. Conversely, for $w \in \mathtt{W}_1^*$, $a$-dimers $(w,w-e_i)$ correspond to the directed edges $(w,w-2e_i)$ for $i \in \{1,2\}$ which is on the directed spanning tree $\mathbb{T}^{\mathtt{w},p}$.

	Suppose that there is a biinfinite path in the smooth phase.  The biinfinite path in the smooth phase cannot be supported on only one tree as this contradicts Lemma~\ref{lem:oneended}, that is, the $a$-dimers on the biinfinite path belong to both $\mathbb{T}^{\mathtt{w},p}$ and $\mathbb{T}^{\mathtt{f},p}$.  Consider a dimer, $d_1$, incident to $\mathtt{W}_0^*$ (not necessarily an $a$-dimer)  and take the same type of dimer, $d_2$, on the other side of the biinfinite path, that is, there is a sequence of adjacent faces between $d_1$ and $d_2$ that crosses the biinfinite path an odd number of times.   Thanks to the bijection between trees, dimers and heights (up to height level) two dimers of a tree (of the same type) can only join the same branch if they separate the same height after winding~\cite{KPW:00}.  Since these two dimers separate different heights, then the branch passing through $d_1$  must unwind before joining the branch passing through $d_2$ (or vice versa).  However, this is true for all pairs of vertices on either side of the biinfinite path,  which is only possible if there are more than one tree for $\mathbb{T}^{\mathtt{w},p}$ and  $\mathbb{T}^{\mathtt{f},p}$, which is a contradiction.

\end{proof}

\section{Peierls argument for loops and double edges} \label{sec:Peierls}

In this section, we first give the proof of Lemma~\ref{lem:loopsalmostsure} which is based on Peierls argument.  It turns out that the same argument can be applied for double edges, which holds for all $a \in (0,1)$. This statement and proof is also given below.

\begin{proof}[Proof of Lemma~\ref{lem:loopsalmostsure}]
	We give the result for $\mathbb{P}_{\mathrm{Az}}$ and then explain the difference for $\mathbb{P}_{\mathrm{sm}}$.
	Let $\gamma$ be a loop in $D_m$ and let
$Z_{D_m\backslash \gamma}$ be the partition function for the dimer coverings on $D_m \backslash \gamma$. Then,
\begin{equation}\label{peierlsestimate}
	Z_{D_m} \geq  Z_{D_m \backslash \gamma} (1+a^{\ell(\gamma)})
\end{equation}
	where the coefficient of $Z_{D_m \backslash \gamma}$ comes from rotating the $a$-dimers along the loop $\gamma$ so that they are now $b$-dimers and noting that the product of edge weights when all dimers on $\gamma$ are $a$-edges is $a^{\ell(\gamma)}$ while when they are all $b$-dimers, the product of edge weights is equal to 1. From this, we have
	\begin{equation}\label{eq:Aztecbound}
\mathbb{P}_{\mathrm{Az}}[\mbox{All $a$ edges along $\gamma\in \mathcal{D}_l$}]\leq  a^{l(\gamma)}.
\end{equation}
Then, letting $v$ be a face in $S$, we obtain
	\begin{equation}
\begin{split}
&\mathbb{P}_{\mathrm{Az}}[\mbox{$\exists \gamma \in \mathcal{D}_l$ containing $v, l(\gamma) \geq d$, $a$ edges along $\gamma$}]\\
& \leq  \sum_{\substack{ \gamma \ni v}}a^{l(\gamma) } \leq  \sum_{k=d}^{\infty}(3a)^k=\frac{(3a)^{d}}{1-3a}
\end{split}
\end{equation}
	provided that $a<1/3$. The first inequality above comes from a counting argument: when tracing over the edges of the loop, there are two choices for $b$-edges at the endpoint of each $a$-edge at a $b$-face.  For one of these choices, the next $a$-edge in the sequence is determined, while the other choice has two choices for the next $a$-edge in  the sequence, which means three choices in total.  We now take a union bound over all faces $v$ in $S$ which gives the result for $\mathbb{P}_{\mathrm{Az}}$.

	The same argument holds for the full-plane smooth phase provided we show the analog of~\eqref{eq:Aztecbound} for the smooth phase, that is showing  
	\begin{equation}\label{eq:smoothbound}
\mathbb{P}_{\mathrm{sm}}[\mbox{All $a$ edges along $\gamma\in \mathcal{D}_l$}]\leq  a^{l(\gamma)}.
\end{equation}
	However, the above equation immediately follows since the smooth phase is a Gibbs measure~\cite{KOS:06}.
\end{proof}

Next we show that the same argument given in the proof of Lemma~\ref{lem:loopsalmostsure} holds for double edges.   For a dimer covering, let $\mathcal{D}_e$ be the set of all sequences of distinct edges $\gamma=(e_1,\dots, e_{2k})$  such that the following properties hold
\begin{enumerate}
\item $e_i$ shares endpoints with $e_{i-1}$ and $e_{i+1}$ for all $0 \leq i \leq 2k$ with $e_0=e_{2k}$ and $e_{2k+1}=e_1$;
\item $e_{2i+1}$ are $a$-edges while $e_{2i+2}$ are $b$-edges for all $0 \leq i \leq k-1$;
\item the pairs $(e_{2i+1},e_{2k-(2i+1)})$ form double edges after the squishing procedure for all $0 \leq i \leq k-1$; 
\item $\gamma$ is not incident to any other double edges.
\end{enumerate}
For $\gamma \in \mathcal{D}_{e}$, let $\ell_e(\gamma)$ be the number of $a$-dimers  in $\gamma$. 
\begin{lemma} \label{lem:doubleedges}
	Let $S$ be a set of $a$-edges in $D_m$ or in the full-plane. Then, for all $a \in (0,1)$, 	
$$
	\mathbb{P}[\exists \gamma \in \mathcal{D}_e\mbox{ that intersects $S$ and has length $\ell_e(\gamma)$ at least } d] \leq \frac{2|S|}{1-a} a^d
$$
	where $|S|$ is the size of $S$, and $\mathbb{P}$ is either $\mathbb{P}_{\mathrm{Az}}$ or $\mathbb{P}_{\mathrm{sm}}$.  
	
\end{lemma}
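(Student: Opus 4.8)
The plan is to mirror the structure of the proof of Lemma~\ref{lem:loopsalmostsure} line by line, replacing the loop Peierls argument by an analogous one for double-edge chains, with the key point being that the energy gain now comes not from rotating a loop of $a$-dimers into $b$-dimers, but from collapsing a chain of double edges. Fix $\gamma=(e_1,\dots,e_{2k})\in\mathcal{D}_e$ with $\ell_e(\gamma)=k$. Let $Z_{D_m\setminus\gamma}$ be the partition function of dimer coverings on $D_m$ with the vertices covered by $\gamma$ (together with their incident edges) removed. Observe that the event that $\gamma$ realizes its double-edge configuration uses $k$ many $a$-edges, contributing weight $a^k$ to the partition function, while there is at least one competing way to re-cover the same set of vertices with total weight $1$ (for instance, covering every other edge of the even cycle $(e_1,\dots,e_{2k})$: one such perfect matching of the cycle uses only $b$-edges and hence has weight $1$, the other uses the $k$ $a$-edges as before). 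This yields the Peierls estimate
\begin{equation}
Z_{D_m}\ \geq\ Z_{D_m\setminus\gamma}\,(1+a^{\ell_e(\gamma)}),
\end{equation}
and consequently
\begin{equation}
\mathbb{P}_{\mathrm{Az}}[\gamma\ \text{realized as a double-edge chain}]\ \leq\ a^{\ell_e(\gamma)}.
\end{equation}

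Next I would carry out the combinatorial counting of how many chains $\gamma\in\mathcal{D}_e$ of a given length pass through a fixed $a$-edge (or fixed endpoint face) $v$. Starting from $v$ and tracing the chain, at each $b$-face the parity/structure of the double-edge configuration limits the number of ways to extend: as in the loop case each step offers at most a bounded number of continuations, and here one checks — by looking at the local configurations around a $b$-face in Fig.~\ref{fig:localconfigs} — that there are at most two choices for the next $a$-edge once the previous one is fixed. Hence the number of double-edge chains of length $k$ through $v$ is at most $2^k$ (possibly with an extra bounded factor from the initial direction, which I will absorb into the constant), and the weighted sum obeys
\begin{equation}
\sum_{\gamma\ni v}a^{\ell_e(\gamma)}\ \leq\ \sum_{k=d}^{\infty} 2^k a^k\ =\ \frac{(2a)^d}{1-2a}\quad\text{for } a<1/2.
\end{equation}
Since the statement only claims the bound $\frac{2|S|}{1-a}a^d$, I should instead arrange the counting to produce the factor $a^k$ (not $(2a)^k$) — and indeed a double edge is \emph{forced}: once one $a$-edge of a double edge is specified, its partner is the parallel $a$-dimer on the same squished edge, and condition~(3) pairs $e_{2i+1}$ with $e_{2k-(2i+1)}$, so the chain is actually determined by its two endpoint $a$-edges and a single binary choice at each step of whether to continue or close up. Thus the number of chains through $v$ of length exactly $k$ is at most $2$ (one reading in each direction) for each $k$, giving $\sum_{\gamma\ni v,\ \ell_e(\gamma)\geq d}a^{\ell_e(\gamma)}\leq 2\sum_{k\geq d}a^k = \frac{2a^d}{1-a}$, and a union bound over the $|S|$ edges of $S$ finishes the Aztec case.

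Finally, for the full-plane smooth phase the Peierls estimate~\eqref{eq:smoothbound}-analog holds for exactly the same reason invoked in the proof of Lemma~\ref{lem:loopsalmostsure}: the smooth phase is a Gibbs measure~\cite{KOS:06}, so conditioning on the configuration outside the vertex set of $\gamma$ and comparing the double-edge configuration to the weight-$1$ alternative on the finite cycle gives $\mathbb{P}_{\mathrm{sm}}[\gamma\ \text{realized}]\leq a^{\ell_e(\gamma)}$; the counting argument is purely combinatorial and identical. I expect the main obstacle to be pinning down the precise combinatorial bound on the number of double-edge chains through a face so that the constant matches $\frac{2|S|}{1-a}$ rather than something like $\frac{|S|}{1-2a}$ — this requires being careful that conditions (1)–(4) in the definition of $\mathcal{D}_e$ (especially the pairing (3) and the maximality (4), "not incident to any other double edges") genuinely make the chain deterministic given one endpoint edge and an orientation, using the local configuration analysis around each $b$-face. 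Everything else is a routine adaptation of the loop argument.
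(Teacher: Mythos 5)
Your proposal is correct and follows essentially the same route as the paper: a Peierls estimate obtained by swapping the $a$-dimers of $\gamma$ for the weight-$1$ $b$-dimer alternative (the paper does this per double edge, yielding $Z_{D_m}\geq Z_{D_m\backslash\gamma}\prod(1+a^2)$, while you flip the whole even cycle at once, yielding $Z_{D_m}\geq Z_{D_m\backslash\gamma}(1+a^{\ell_e(\gamma)})$ --- both give $\mathbb{P}[\gamma]\leq a^{\ell_e(\gamma)}$), followed by the observation that a chain in $\mathcal{D}_e$ through a fixed edge is determined up to the choice of direction, giving the factor $2\sum_{k\geq d}a^k$, a union bound over $S$, and the Gibbs-measure argument for $\mathbb{P}_{\mathrm{sm}}$. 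The counting point you flag as the main obstacle is resolved exactly as in the paper.
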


\begin{proof}
We give the proof of $D_m$ and the proof for the full-plane is analagous by arguing the same way for~\eqref{eq:smoothbound}.   Let $\gamma \in \mathcal{D}_e\cap \Lambda_L$  and  let $Z_{D_m\backslash \gamma}$ be the number of dimer coverings on $D_m \backslash \gamma$, where $D_m\backslash \gamma$ is the graph $D_m$ removing $\gamma$ and any incident edges to $\gamma$. If $\gamma= \emptyset$, then $Z_{D_m \backslash \emptyset}=Z_{D_m}$, which is the number of dimer coverings on $D_m$.  We can partition the set of dimer coverings on $D_m$ into the set of dimer coverings which is also a dimer covering of the smaller graph $D_m \backslash \gamma$ (with a dimer covering on $\gamma$) and those where there is no dimer covering on the smaller graph $D_m \backslash \gamma$.  This gives
\begin{equation}
	Z_{D_m} \geq Z_{D_m \backslash \gamma} \prod_{i=0}^{\ell_e(\gamma)/2}(1+a^{2})
\end{equation}
where the coefficient of $Z_{D_m \backslash \gamma}$ is 
	due to each double edge could be replaced  $b$ edges instead. This gives $Z_{D_m} \geq Z_{D_m \backslash \gamma}$ which means that 
\begin{equation}
	\mathbb{P}_{\mathrm{Az}}[\mbox{All double edges along $\gamma\in \mathcal{D}_e$}]=\frac{ a^{\ell_e(\gamma)}Z_{D_m \backslash \gamma} }{Z_{D_m}}\leq  a^{\ell_e(\gamma)}.
\end{equation}
Let $v$ be a vertex in $S$. Then, 
\begin{equation}
\begin{split}
&\mathbb{P}_{\mathrm{Az}}[\mbox{$\exists \gamma \in \mathcal{D}_e$ containing $v, \ell_e(\gamma) \geq d $, double edges along $\gamma$}] \\
&\leq  \sum_{\substack{ \gamma \ni v}}a^{\ell_e(\gamma) } =2 \sum_{k=d}^{\infty}a^k=\frac{2 a^{d}}{1-a}.
\end{split}
\end{equation}
for $a \in (0,1)$.
The factor $2$ above is due to there being only two choices for the direction of $\gamma$ and once that choice is made, there are no further choices.

\end{proof}

\begin{appendix}
	\section{Proof of Lemma~\ref{anothersmoothbound}}  \label{appendix}
	Here, we bring forward a result from~\cite{CJ:16} which allows us to prove Lemma~\ref{anothersmoothbound}.
	Introduce 
\begin{equation}\label{Ekl}
\mathtt{E}_{k,\ell}=\frac 1{(2\pi\mathrm{i})^2}\int_{\Gamma_1}\frac{du_1}{u_1}\int_{\Gamma_1}\frac{du_2}{u_2}\frac{u_1^{\ell}u_2^{k}}{\tilde{c}(u_1,u_2)}.
\end{equation}
	Then, from~\eqref{smoothphaseeqn} we have 
\begin{equation}\label{GasE}
\mathbb{K}^{-1}_{1,1}(x,y)=-\mathrm{i}^{1+h(\eps_1,\eps_2)}\left(a^{\eps_2}\mathtt{E}_{k_1,\ell_1}+a^{1-\eps_2}\mathtt{E}_{k_2,\ell_2}\right),
\end{equation}
where 
\begin{equation}\label{kldef}
	\begin{split}
		&k_1=\frac{x_2-y_2-1}{2}+h(\varepsilon_1,\varepsilon_2)\,,\,\ell_1=\frac{y_1-x_1-1}{2}\\
		&k_2=\frac{x_2-y_2+1}{2}-h(\varepsilon_1,\varepsilon_2)\,,\,\ell_2=\frac{y_1-x_1+1}{2}.
\end{split}
\end{equation}

	The following is given in~\cite[Lemma 4.7]{CJ:16}.
\begin{lemma}\label{lem:Eformula}
Let $A_m$, $B_m$, $m\ge 1$, be given and set $b_m=\max(|A_m|,|B_m|)$, and
\begin{equation}
a_m=\begin{cases} 
A_m   &\quad \text{if } b_m=|B_m| \\
B_m   &\quad \text{if } b_m=|A_m|. \\
\end{cases}
\end{equation}
Assume that $b_m >0$, $m\geq 2$.  There exists constants $C,d_1,d_2 >0$ so that
\begin{equation} \label{Easympeq2}
	|\mathtt{E}_{B_m+A_m,B_m-A_m} | \leq \frac{C}{\sqrt{b_m}}\mathcal{C}^{2 b_m} \left( e^{-d_1 \frac{a_m^2}{b_m}}+e^{-d_2 b_m} \right)
\end{equation}
	for all $m\geq 2$ and $\mathcal{C}$ is defined in~\eqref{G}.

\end{lemma}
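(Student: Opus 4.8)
The plan is to evaluate $E_{B_m+A_m,\,B_m-A_m}$ from \eqref{Ekl} by first carrying out the $u_2$–integral by residues and then performing a steepest‑descent analysis of the remaining single integral, keeping uniform control over the two scales $b_m$ (large) and $a_m\in[0,b_m]$. First, since $\tilde c$ in \eqref{ctilde} depends on $u_1,u_2$ only through $u_1+u_1^{-1}$ and $u_2+u_2^{-1}$, the substitutions $u_1\mapsto u_1^{-1}$, $u_2\mapsto u_2^{-1}$ and $u_1\leftrightarrow u_2$ leave \eqref{Ekl} invariant up to relabelling the exponents, so $E_{k,\ell}=E_{|k|,|\ell|}=E_{|\ell|,|k|}$ and we may assume $k:=B_m+A_m\ge\ell:=B_m-A_m\ge0$, i.e. $0\le A_m\le B_m=b_m$ and $a_m=A_m=(k-\ell)/2$. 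Next, for $|u_1|=1$ write $u_2\,\tilde c(u_1,u_2)=a(u_1+u_1^{-1})\big(u_2-w_+(u_1)\big)\big(u_2-w_-(u_1)\big)$, where $w_\pm$ are the roots of $w^2+\beta w+1$ with $\beta=\beta(u_1)=2(1+a^2)u_1/\big(a(u_1^2+1)\big)$; as $c<1/2$ one has $|\beta|\ge 1/c>2$ on $|u_1|=1$, hence $w_+w_-=1$ with $|w_-|<1<|w_+|$ (the degenerate points $u_1=\pm\mathrm{i}$ being handled directly). Since $k\ge 0$, only the pole at $u_2=w_-(u_1)$ lies inside $\{|u_2|<1\}$, and collecting the residue gives
\[
E_{k,\ell}=\frac1{2\pi\mathrm{i}}\int_{\Gamma_1}u_1^{\ell-1}\,\frac{w_-(u_1)^{k}}{a(u_1+u_1^{-1})\,\big(w_-(u_1)-w_+(u_1)\big)}\,du_1 .
\]

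For the asymptotics I would write the integrand as $e^{b_m g(u_1)+a_m\tilde g(u_1)}$ times an algebraic factor, where $g(u_1)=\log\!\big(u_1w_-(u_1)\big)$ and $\tilde g(u_1)=\log\!\big(w_-(u_1)/u_1\big)$. Solving $\tfrac{d}{du_1}\!\big(u_1w_-(u_1)\big)=0$ (using $\tilde c(u_1,w_-)=0$) one finds the critical points $u_1^\ast=\pm\mathcal C$, at which $u_1^\ast w_-(u_1^\ast)=-\mathcal C^2$; this produces the factor $\mathcal C^{2b_m}$, and since $0<a<\mathcal C<1$ the contour $\Gamma_1$ can be deformed through $\pm\mathcal C$ without meeting the branch points $u_1=\pm a$ of $w_-$. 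In the bulk regime $a_m\le\delta b_m$ (for a small $\delta>0$) the saddle of $b_m g+a_m\tilde g$ is a perturbation $u_1^\ast(a_m/b_m)$ of $u_1^\ast$; since $\tilde g(u_1^\ast)=\log(-1)$ is purely imaginary, the correction to $\mathrm{Re}\,(b_m g+a_m\tilde g)$ that is linear in $a_m$ vanishes at the saddle, and a standard Laplace expansion along the steepest‑descent contour (using $g''(u_1^\ast)\ne0$) produces the prefactor $b_m^{-1/2}$ together with the Gaussian transverse factor $e^{-d_1 a_m^2/b_m}$, the connecting arcs being exponentially negligible.

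In the complementary regime $a_m>\delta b_m$ (equivalently $\ell$ is a bounded fraction of $k$; $\ell=0$ corresponds to a saddle at $u_1=\pm1$), the expansion degenerates near the branch points where $w_-=w_+$, so instead of an asymptotic expansion I would bound the integral above directly on a circle $\Gamma_r$ with $r$ chosen to minimise, using that the value of the phase at the relevant saddle is at most $2\log\mathcal C-d_2$ for some $d_2>0$, uniformly in this regime; this yields $|E_{k,\ell}|\le C\,\mathcal C^{2b_m}e^{-d_2 b_m}$. Since the right‑hand side of \eqref{Easympeq2} is a sum of the two types of estimate, combining the bulk bound of the previous paragraph with this one — and absorbing the harmless factor $b_m^{-1/2}$ into $e^{-d_2 b_m}$ in the second regime — gives \eqref{Easympeq2}.

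The main obstacle is the \emph{uniformity} of the saddle‑point analysis: one must follow $u_1^\ast(a_m/b_m)$ and a steepest‑descent contour through it continuously as $a_m/b_m$ varies over a neighbourhood of $0$, establish a quadratic lower bound $-\mathrm{Re}\,\big[b_m g+a_m\tilde g-(\text{value at the saddle})\big]\ge c\,b_m\,|u_1-u_1^\ast|^2$ with $c$ independent of $m$ and of $a_m/b_m$ in that regime, and splice this to the crude bound of the complementary regime so that the two ranges of validity overlap. Verifying the identity $u_1^\ast w_-(u_1^\ast)=-\mathcal C^2$ (which ties the saddle value to the explicit constant $\mathcal C$ of \eqref{G}) and handling the branch structure of $w_-(u_1)$ — the choice of sheet and the behaviour near $u_1=\pm a$ and $u_1=\pm\mathrm{i}$ — are the remaining technical points.
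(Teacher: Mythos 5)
This lemma is not proved in the paper at all: it is quoted verbatim from \cite[Lemma~4.7]{CJ:16}, so there is no internal argument to compare against. Your plan --- integrate out $u_2$ by residues, then run a steepest-descent analysis on the remaining $u_1$-integral --- is the natural route and, as far as one can tell, the same one used in the cited reference. The structural facts you invoke all check out: the symmetries $E_{k,\ell}=E_{|k|,|\ell|}=E_{|\ell|,|k|}$ do reduce matters to $k\ge\ell\ge 0$; on $|u_1|=1$ one has $|\beta|\ge 1/c>2$, so exactly one root $w_-$ lies in the open unit disk and your residue formula is correct (with the apparent singularity at $u_1=\pm\mathrm{i}$ removable, since $a(u_1+u_1^{-1})(w_--w_+)\to \pm 2(1+a^2)$ there); the criticality equation for $u_1w_-(u_1)$, combined with $\tilde{c}(u_1,w_-)=0$, reduces to $t^2+(2-2/c)t+1=0$ for $t=u_1^2$, whose smaller root is exactly $\mathcal{C}^2=(1-c-\sqrt{1-2c})/c$, confirming $u_1^\ast=\pm\mathcal{C}$ and $u_1^\ast w_-(u_1^\ast)=-\mathcal{C}^2$; and $\mathcal{C}>a$ for all $a\in(0,1)$ (equivalent to $1-a>(1-a)/\sqrt{1+a^2}$), so the deformation through $\pm\mathcal{C}$ avoids the branch points $\pm a$.

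That said, what you have is an outline, and the two items you defer are precisely where the content of \eqref{Easympeq2} lies. First, for the factor $e^{-d_1 a_m^2/b_m}$ it is not enough that $\mathrm{Re}\,\tilde g(u_1^\ast)=0$: the second-order correction to the saddle value is $-\tfrac{a_m^2}{2b_m}\,\tilde g'(u_1^\ast)^2/g''(u_1^\ast)$, and since both $\tilde g'(u_1^\ast)$ and $g''(u_1^\ast)$ are real at $u_1^\ast=\pm\mathcal{C}$ this is a concrete sign that must be verified to be negative, together with the uniform quadratic lower bound $-\mathrm{Re}\,[b_mg+a_m\tilde g-(\text{saddle value})]\ge c\,b_m|u_1-u_1^\ast|^2$ along a descent contour chosen uniformly in $a_m/b_m$. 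Second, in the regime $a_m>\delta b_m$ the key inequality $\min_r\max_{|u_1|=r}|u_1^{\ell}w_-(u_1)^{k}|\le \mathcal{C}^{2b_m}e^{-d_2 b_m}$ is asserted rather than proved; it amounts to strict concavity of the Legendre-type function $\rho\mapsto\max_{u}(\rho\log|u|+\log|w_-(u)|)$ so that it lies strictly below its tangent at $\rho=1$, and this, plus the overlap of the two regimes, must be established. (A minor inaccuracy: for $\ell/k$ bounded away from $1$ the real saddle moves from $\pm\mathcal{C}$ toward $\pm1$, where $|\beta|=1/c>2$, so it never reaches the branch points $\pm a$; the degeneration you fear does not actually occur, and a single uniform saddle-point argument over $\ell/k\in[0,1]$ is available.) Neither point looks fatal, but until they are carried out the estimate is not established.
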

	We now prove Lemma~\ref{anothersmoothbound}.
	\begin{proof}
		We set $B_m=(k_i+l_i)/2$ and $A_m=(k_i-l_i)/2$ with $1 \leq i \leq 2$ in Lemma~\ref{lem:Eformula} and notice that $b_m=|A_m|$ while $a_m=B_m$ for the conditions given in Lemma~\ref{anothersmoothbound}. From the restriction of $R$, we have that $L<\lambda_1 (\log m)^2$. This restriction on $L$ means that $\Lambda^1$ and $\Lambda^2$ do not overlap and are separated by a distance of at least $\lambda_1(2-\sqrt{2}) (\log m)^2$.  This means that  the smallest $b_m$ happens when $\Lambda^1$ and $\Lambda^2$ are closest, that is, $k_2=k_1+1$ in the definition of $\Lambda^1$ and $\Lambda^2$, and so $b_m$ is at least equal to $\lambda_1(2-\sqrt{2}) (\log m)^2$.  	We apply Lemma~\ref{lem:Eformula} and using that $b_m$ is at least of order $(\log m)^2$, we find that 
	$$
		|\mathtt{E}_{B_m+A_m,B_m-A_m} | \leq  \frac{2 C}{\log m } \mathcal{C}^{c_1 (\log m)^2} =\frac{2C}{\log m} e^{-c_0 (\log m)^2} 
		$$
		with $C,c_0,c_1>0$, since $\mathcal{C}<1$. 
	\end{proof}

	\section{Proof of Lemma~\ref{lem:subtlecomps}}  \label{Appendix:proofsubtlecomps}

	We only give the proof of the first equation in the lemma.  The proof of the second equation is analagous, but requires considering the particle process transversally (there is no additional technical complications here, just more notation).  The outline of the proof is to introduce particle process given~\cite{BCJ:16} and then use the determinantal structure to perform a cumulant expansion.  We can then use results from~\cite{BCJ:16}.  
	
	For $\eps\in\{0,1\}$, introduce	
	\begin{equation}
	\mathcal{L}_m^{\eps}(q,k) = \{(2t-\eps+\frac{1}{2} ) e_1 -\beta_m(q,k)e_2 ; t \in [0,4m]\cap \mathbb{Z} \}.
\end{equation}
and 
	\begin{equation}
\mathcal{L}_m^{\eps}= \bigcup_{q=1}^{L_1} \bigcup_{k=1}^M \mathcal{L}_m^{\eps}(q,k).
\end{equation}
	Then, $\mathcal{L}_m=\mathcal{L}_m^0 \cup \mathcal{L}_m^1$ defines discrete intervals on the Aztec diamond.   For $z \in \mathcal{L}_m$, write $\eps(z)=\eps $ if $z \in \mathcal{L}_m^{\eps}$ where $\eps \in \{0,1\}$.  For $z \in \mathcal{L}_m$, and since each $z$ is incident to an $a$-face, we let
\begin{equation}
\begin{split} \label{xyz}
	x(z)&= z-\frac{1}{2} (-1)^{\eps(z)} e_2 \in \mathtt{W}_{\eps}  \\
y(z)&= z+\frac{1}{2} (-1)^{\eps(z)} e_2 \in \mathtt{B}_{\eps}. \\
\end{split}
\end{equation}
for $\eps \in \{0,1\}$ which gives a relation between particles and dimers.  The determinantal point process on $\mathcal{L}_m$ is given by 
\begin{equation} \label{eq:Ksplit2}
	\tilde{\mathcal{K}}_m(z,z')  = a \mathrm{i} K_{a,1}^{-1}(x(z'),y(z))=\tilde{\mathcal{K}}_{m,0}(z,z') +\tilde{\mathcal{K}}_{m,1}(z,z') 
\end{equation}
where the second equality is due to~\eqref{eq:Ksplit}.
Let $I_{p,q,k}$ be the interval in $\mathcal{L}_m(q,k)$ between $J_{p,q,k,1}^r$ and $J_{p,q,k,k}^r$. Then, let 
\begin{equation}
	\mathbb{I}_{p,q,k}(z) = \left\{ \begin{array}{ll}
		1 & z \in I_{p,q,k} \\
	0 & \mbox{otherwise.} \end{array} \right. 
\end{equation}
We have that 
\begin{equation}
	h^a(J_{p,q,k,1}^r)-h^a(J_{p,q,k,k}^r) = \sum_i (-1)^{\eps(z_i)} \mathbb{I}_{p,q,k}(z_i) 
\end{equation}
where $\sum_i$ is the sum of all particles in the determinantal point process on $\mathcal{L}_m$.  Let 
\begin{equation}
	\psi (z) = \sum_{k=2}^M \sum_{p=1}^{L_2} \sum_{q=1}^{L_1} w_{p,q} (-1)^{\eps(z)} \mathbb{I}_{p,q,k}(z). 
\end{equation}
Then, we have the following
\begin{equation}
	\begin{split}
		&	\mathbb{E}_{\mathrm{Az}} \bigg[ \exp \bigg[ \frac{1}{M} \sum_{p=1}^{L_2} \sum_{q=1}^{L_1} \sum_{k=2}^M w_{p,q} ( h^a(J_{p,q,k,1}^r)-h^a(J_{p,q,k,k}^r))\bigg] \bigg]= \mathbb{E}\big[e^{\frac{1}{M} \sum_i \psi(z_i)}\big] \\
		 &=\mathbb{E}\bigg[\prod_ie^{\frac{1}{M}  \psi(z_i)}\bigg]
		= \det ( \mathbb{I} +(e^{\frac{1}{M} \psi} -1)\mathcal{K}_m)
	\end{split} 
\end{equation}
where the expectations in the second and third equality are over the determinantal point process on $\mathcal{L}_m$.  We can now take a cumulant expansion by taking logarithms of both sides, which gives 
\begin{equation} \label{traceexp}
	\log \det ( \mathbb{I} +(e^{\psi/M} -1)\mathcal{K}_m) = \sum_{s=1}^{\infty} \frac{1}{M^s} \sum_{r=1}^s \frac{ (-1)^{r+1}}{r} \sum_{\substack{ \ell_1+ \dots +\ell_r =s \\  \ell_1, \dots ,\ell_r \geq  s }} \mathrm{tr} \big[ \psi^{\ell_1} \mathcal{K}_m  \dots \psi^{\ell_r} \mathcal{K}_m  \big];
\end{equation}
see for example p450 in~\cite{BD:14}.  Below, we use the notation $[N]=\{1,\dots, N\}$, $\overline{p}=(p_1,\dots,p_r)$ and  $z_{r+1}=z_1$. Expanding the above trace gives 
\begin{equation}
	\begin{split} \label{trace2}
		&\mathrm{tr} \big[ \psi^{\ell_1} \mathcal{K}_m  \dots \psi^{\ell_r} \mathcal{K}_m  \big]= \sum_{\overline{z} \in \mathcal{L}_m^r} \sum_{\overline{k} \in [M]^r} \sum_{\overline{p} \in [L_2]^r}\sum_{\overline{q} \in [L_1]^r} \prod_{i=1}^r\\ & \,\times  \mathbb{I}_{p_i,q_i,k_i}(z_i) (-1)^{\ell_i\eps( z_i)} w_{p_i,q_i}^{r_i} \mathcal{K}_m(z_i,z_{i+1})\\
		&=\sum_{\overline{\delta} \in \{0,1\}} \sum_{\overline{z} \in \mathcal{L}_m^r} \sum_{\overline{k} \in [M]^r} \sum_{\overline{p} \in [L_2]^r}\sum_{\overline{q} \in [L_1]^r} \prod_{i=1}^r\\ & \,\times  \mathbb{I}_{p_i,q_i,k_i}(z_i) (-1)^{\ell_i \eps(z_i)} w_{p_i,q_i}^{r_i}  \mathcal{K}_{m,\delta_i}(z_i,z_{i+1})
		\end{split}
\end{equation}
by~\eqref{eq:Ksplit2}.  Following our previous approach in~\cite[Section 4]{BCJ:16} we split this trace into four parts. Let 
\begin{equation}
D_r=\{0,1\}^r \times [M]^r \times [L_2]^r \times [L_1]^r.
\end{equation}
Define
\begin{equation}
	D_{r,0} = \{(\overline{\delta},\overline{k},\overline{p},\overline{q}) \in D_r; \delta_i =0,k_i=k_{i+1},p_i=p_{i+1} \mbox{~and~} q_i=q_{i+1},1\leq i \leq r\},
\end{equation}
\begin{equation}
D_{r,1} = \{ (\overline{\delta},\overline{k},\overline{p},\overline{q}) \in D_r; \delta_i =0, q_i=q_{i+1} \mbox{~for~}1\leq i \leq r \mbox{~and~}p_i\not=p_{i+1} \mbox{~for some~} i \},  
\end{equation}
\begin{equation}
	\begin{split}
		D_{r,2} = \{ &(\overline{\delta},\overline{k},\overline{p},\overline{q}) \in D_r; \delta_i =0,q_i=q_{i+1},p_i=p_{i+1}\mbox{~for~}1\leq i\leq r \\ & \mbox{~and~} k_i\not=k_{i+1} \mbox{~for some~}i\},
	\end{split}
\end{equation}
and
\begin{equation}
D_{r,3} = \{ (\overline{\delta},\overline{k},\overline{p},\overline{q}) \in D_r; \delta_i =1 \mbox{~or~} q_i \not = q_{i+1} \mbox{~for some~} i \}.
\end{equation}
Then, we have $D_r=D_{r,0} \cup D_{r,1} \cup D_{r,2} \cup D_{r,3}$. Introduce
\begin{equation}  \label{Tjmrell}
	\begin{split}
		T_j(m,r,\overline{l}) &=  \sum_{\overline{z} \in( \mathcal{L}_m)^r } \sum_{(\overline{\delta},\overline{k},\overline{p},\overline{q}) \in D_{r,j}} \prod_{i=1}^r(-1)^{\ell_i \eps(z_i) }  w_{p_i,q_i}^{\ell_i}   \mathbbm{I}_{p_i,q_i,k_i} (z_i)
  \mathcal{K}_{m,\delta_i}(z_i,z_{i+1}),
	\end{split}
\end{equation}
for  $0\leq j \leq 3$.  Then, by~\eqref{traceexp} and~\eqref{trace2} we have
\begin{equation}
\log \det( \mathbbm{I} +(e^{\frac{1}{M} \psi}-1) \mathcal{K}_m) = \sum_{j=0}^3 U_j(m)
\end{equation}
where we define
\begin{equation} \label{Ujm}
U_j(m)=  \sum_{s=1}^\infty \frac{1}{M^s} \sum_{r=1}^s \frac{(-1)^{r+1}}{r} \sum_{\substack{\ell_1+\dots+\ell_r=s \\ \ell_1,\dots,\ell_r \geq 1}} \frac{T_j(m,r,\overline{\ell})}{\ell_1! \dots \ell_r!}.
\end{equation}
From Lemmas 4.1 and 4.2 in~\cite{BCJ:16}, we have that $U_0(m), U_1(m), U_2(m)$ tend to 0 uniformly as $m\to \infty$ for $|w_{p,q}| \leq R$.   We can trivially bound $T_3$ by using Lemma~\ref{lem:Eformula} and Theorem~\ref{Airyasymptotics} since the sum over each $z_i$ in~\eqref{Tjmrell} is over atmost $M(\log m)^2$ terms which gives the bound
\begin{equation}
	| T_3(m,r,\overline{\ell})| \leq \sum_{(\overline{\delta},\overline{k},\overline{p},\overline{q}) \in D_{r,3}}  \frac{C_1^r}{m^{r/3}} M^{r} (\log m)^{2r} \leq  \frac{C^r}{m^{r/3}} M^{2r} (\log m)^{2r} 
\end{equation}
where $C,C_1>0$ are  constants. This gives that 
\begin{equation}
	| U_3(m) | \leq \sum_{s=1}^\infty \frac{R^s}{M^s} \sum_{r=1}^s \frac{1}{r} \sum_{\substack{ \ell_1+\dots+\ell_r=s \\ \ell_1,\dots,\ell_r\geq s}} \frac{C^r  (\log m)^{2r}M^{2r}}{m^{r/3}}  
\end{equation}
which tends to zero as $m \to \infty$ for $R$ sufficiently small.  This concludes the proof of the first equation in the lemma.

\end{appendix}

\bibliographystyle{plain}

\bibliography{ref}

\end{document}